\pgfplotsset{compat=1.18}
\newcommand{\R}{\mathbb{R}}
\newcommand{\Id}{\mathrm{Id}}
\newcommand{\Sph}{\mathbb{S}}   % geodesic sphere symbol
\theoremstyle{plain}
\newtheorem{lemma}{Lemma}[section]
\newtheorem{theorem}{Theorem}[section]
\newtheorem{proposition}[theorem]{Proposition}
\newtheorem{corollary}[theorem]{Corollary}
\theoremstyle{definition}
\newtheorem{definition}[theorem]{Definition}
\newtheorem{remark}[theorem]{Remark}
\newtheorem{example}[theorem]{Example}
\newtheorem*{theorem*}{Theorem} 
\numberwithin{equation}{section}
\title{Deterministic–Distance Couplings of Brownian Motions on Radially Isoparametric Manifolds}
\author{Gunhee Cho, Hyun Chul Jang and Taeik Kim}
\affil{Department of Mathematics, Texas State University}
\date{}
\begin{document}
	\maketitle
	
\begin{abstract}
	We develop a unified geometric framework for \emph{coadapted Brownian couplings} on 
	\emph{radially isoparametric manifolds} (RIM)—spaces whose geodesic spheres have principal curvatures depending only on the radius. 
	Within the stochastic two–point It\^o formalism, we derive an intrinsic 
	\emph{drift–window inequality}
	\[
	A(r)-\!\sum_i|\kappa_i(r)| \;\le\; \rho'(t) \;\le\;
	A(r)+\!\sum_i|\kappa_i(r)|,
	\]
	governing the deterministic evolution of the inter–particle distance 
	$\rho_t=d(X_t,Y_t)$ under all coadapted couplings.
	We prove that this bound is both \emph{necessary and sufficient} for the existence of a coupling realizing any prescribed distance law~$\rho(t)$, 
	thereby extending the constant–curvature classification of Pascu–Popescu (2018) to all RIM.
	
	The endpoints of the drift window correspond to the \emph{synchronous} and \emph{reflection} couplings, providing geometric realizations of extremal stochastic drifts. 
	Applications include stationary fixed–distance couplings on compact–type manifolds, linear escape laws on asymptotically hyperbolic spaces, 
	and rigidity of rank–one symmetric geometries saturating the endpoint bounds. 
	This establishes a direct correspondence between radial curvature data and stochastic coupling dynamics, linking Riccati comparison geometry with probabilistic coupling theory.
\end{abstract}

\section{Introduction}\label{sec:intro}

Brownian couplings provide a stochastic lens on geometric comparison theory, encoding curvature information through probabilistic interactions between diffusions.  
The study of such couplings lies at the intersection of stochastic analysis, differential geometry, and geometric control.  
A classical motivation stems from the probabilistic analogue of Radó's ``Lion and Man'' pursuit problem, 
popularized by Littlewood~\cite{Littlewood}.  
In Euclidean space, two fundamental coadapted couplings are the \emph{synchronous coupling} and the \emph{reflection (mirror) coupling}, 
introduced by Lindvall--Rogers~\cite{LindvallRogers1986} and extended to Riemannian manifolds 
by Kendall~\cite{Kendall1986} and Cranston~\cite{Cranston1991}.  
These constructions form the foundation of stochastic geometry on manifolds and have led to gradient estimates, 
Harnack inequalities, and heat kernel bounds (e.g.,~\cite{ArnaudonThalmaierWang2006}), 
as well as probabilistic approaches to geometric and analytic problems such as the ``Hot Spots'' conjecture~\cite{BanuelosBurdzy1999}.  
More recent works have applied reflection-type couplings to questions in geometric analysis, spectral theory, and Kähler geometry 
(see, e.g.,~\cite{ChoWeiYang2024,BaudoinChoYang2022,ChaeChoGordinaYang2022}).  
Comprehensive background references include Hsu~\cite{Hsu2002} and Elworthy--Li~\cite{ElworthyLi1994}.

\medskip

A stronger constraint asks that the inter–particle distance
\(\rho_t=d(X_t,Y_t)\) evolve \emph{deterministically} in time.
For Riemannian space forms of constant curvature \(K\),
Pascu and Popescu~\cite{PP2018} provided a complete classification of such
\emph{deterministic–distance couplings}.
Given an absolutely continuous function \(\rho:[0,\infty)\to[0,\pi/\sqrt{K})\),
there exists a coadapted Brownian coupling \((X_t,Y_t)\)
satisfying \(d(X_t,Y_t)=\rho(t)\) for all \(t\)
if and only if \(\rho'(t)\) lies within explicit curvature–dependent bounds.
For instance, when \(K>0\),
\[
-(n-1)\tan\tfrac{\rho}{2}\;\le\;\rho'(t)\;\le\;
-(n-1)\tan\tfrac{\rho}{2}+2(n-1)\cot\rho,
\]
and analogous inequalities hold in the Euclidean and hyperbolic cases.
The lower and upper endpoints correspond respectively to the synchronous
and reflection couplings.

\medskip

The present work extends this classification from constant–curvature manifolds
to the full class of \emph{radially isoparametric manifolds (RIM)}.
A RIM is a pointed complete Riemannian manifold \((M^n,g,o)\) such that,
below the cut locus of \(o\),
each geodesic sphere \(\mathbb{S}_r(o)\) is a smooth homogeneous hypersurface
whose shape operator \(S_r:T\mathbb{S}_r(o)\to T\mathbb{S}_r(o)\)
has principal curvatures \(\kappa_1(r),\dots,\kappa_{n-1}(r)\)
depending only on the radius \(r\).
Equivalently, the distance function \(r=d(o,\cdot)\) satisfies
\(|\nabla r|=1\), and the mean curvature \(A(r)=\mathrm{Tr}(S_r)\)
is a smooth function of \(r\) alone.
This class includes all rotationally symmetric spaces
and rank–one symmetric spaces (ROSS) of compact and noncompact type.

\medskip

Our main result (\textbf{Theorem~\ref{thm:main-deterministic}}) establishes a sharp and intrinsic criterion for
deterministic–distance realizations on this class.
For any coadapted Brownian coupling \((X_t,Y_t)\) on \((M,g)\),
the distance process satisfies the universal \emph{drift–window inequality}
\begin{equation}\label{eq:intro-window}
	A(\rho)-\sum_{i=1}^{n-1}\!|\kappa_i(\rho)|
	\;\le\;
	\rho'(t)
	\;\le\;
	A(\rho)+\sum_{i=1}^{n-1}\!|\kappa_i(\rho)|.
\end{equation}
Conversely, any absolutely continuous function \(\rho(t)\)
satisfying~\eqref{eq:intro-window} arises from some coadapted Brownian coupling
\((X_t,Y_t)\) with \(d(X_t,Y_t)=\rho(t)\) for all \(t\ge0\).
The two endpoint equalities correspond respectively to the synchronous
and reflection couplings.
Hence,~\eqref{eq:intro-window} is both a necessary and sufficient condition
for deterministic–distance realizations on the entire RIM class,
generalizing the constant–curvature inequalities of Pascu--Popescu~\cite{PP2018}.

\medskip

Theorem~\ref{thm:main-deterministic} naturally separates the geometry into two regimes:

\begin{itemize}
	\item \textbf{Static regime (compact type).}
	When all principal curvatures \(\kappa_i(r)\) and the mean curvature \(A(r)\) are positive,
	the interval in~\eqref{eq:intro-window} contains \(0\),
	so constant solutions \(\rho(t)\equiv r_0>0\) exist.
	\textbf{Theorem~\ref{thm:fixed}} characterizes such \emph{fixed–distance couplings},
	and \textbf{Proposition~\ref{prop:fixed-rigidity}} shows that
	equality in the fixed–distance condition implies total umbilicity and rotational symmetry.
	
	\item \textbf{Dynamic regime (noncompact type).}
	If \(A(r)\to A_\infty\) and \(\kappa_i(r)\to\kappa_{\infty,i}\) as \(r\to\infty\),
	then the deterministic distance grows linearly:
	\(\rho(t)\sim v_\infty t\), where
	\(v_\infty\in[A_\infty-\Sigma_\infty,A_\infty+\Sigma_\infty]\),
	with \(\Sigma_\infty=\sum_i|\kappa_{\infty,i}|\).
	\textbf{Theorem~\ref{thm:asymp-linear}} establishes this range,
	and \textbf{Proposition~\ref{prop:asymp-rigidity}} shows that the
	maximal asymptotic velocity corresponds to asymptotic hyperbolicity.
\end{itemize}

The structure of the paper is as follows. 
Section~\ref{sec:framework} develops the geometric framework for radially isoparametric manifolds. 
Section~\ref{sec:stoch} introduces coadapted Brownian couplings and the two–point Itô calculus. 
Section~\ref{sec:drift} derives the coupled distance SDE and identifies the geometric drift term. 
Section~\ref{sec:classification} establishes the main deterministic–distance realization theorem. 
Sections~\ref{subsec:fixed-distance} and~\ref{subsec:asymp-linear} apply this theorem to the static and dynamic regimes, yielding 
Theorems~\ref{thm:fixed} and~\ref{thm:asymp-linear}. 
Finally, Section~\ref{sec:applications} discusses rigidity and asymptotic geometric consequences.

\medskip

Overall, this work establishes a unified correspondence between
the geometric curvature data \((A,\kappa_i)\) and the stochastic evolution
of Brownian distances, extending the deterministic–distance coupling theory
from constant–curvature spaces to the general class of radially isoparametric manifolds.
% --------------------------------------------------------------------
% Suggested references for BibTeX (if not yet present)
% --------------------------------------------------------------------
% \bibitem{Littlewood} J.~E.~Littlewood, *A Mathematician's Miscellany*, Methuen, 1953.
% \bibitem{LindvallRogers1986} T.~Lindvall and L.~C.~G.~Rogers, *Coupling of multidimensional diffusions by reflection*, Ann. Probab. **14** (1986), 860–872.
% \bibitem{Kendall1986} W.~S.~Kendall, *Nonnegative Ricci curvature and the Brownian coupling property*, Stochastics **19** (1986), 111–129.
% \bibitem{Cranston1991} M.~Cranston, *Gradient estimates on manifolds using coupling*, J. Funct. Anal. **99** (1991), 110–124.
% \bibitem{ArnaudonThalmaierWang2006} M.~Arnaudon, A.~Thalmaier, and F.~Y.~Wang, *Gradient estimates and Harnack inequalities on manifolds with boundary*, J. Funct. Anal. **254** (2008), 1973–1997.
% \bibitem{BanuelosBurdzy1999} R.~Ba\~{n}uelos and K.~Burdzy, *On the "Hot Spots" conjecture of J.~Rauch*, J. Funct. Anal. **164** (1999), 1–33.
% \bibitem{Hsu2002} E.~P.~Hsu, *Stochastic Analysis on Manifolds*, AMS, 2002.
% \bibitem{ElworthyLi1994} K.~D.~Elworthy and X.-M.~Li, *Formulae for the derivatives of heat semigroups*, J. Funct. Anal. **125** (1994), 252–286.
% \bibitem{PP2018} N.~Pascu and M.~Popescu, *Brownian motions with deterministic distance in space forms*, Stochastic Processes Appl. **128** (2018), 1684–1714.
	
\section{Geometric framework}\label{sec:framework}

{\tiny }
\subsection{Notation and conventions.}
Throughout this paper:
\begin{itemize}
	\item $(M^n,g)$ is a connected, complete Riemannian manifold; $\langle\cdot,\cdot\rangle$ and $|\cdot|$ denote the Riemannian inner product and norm on tangent spaces; $\nabla$ is the Levi--Civita connection; $\Delta=\operatorname{div}\circ\nabla$ is the Laplace--Beltrami operator.
	\item $o\in M$ is a fixed base point (a ``pole'' wherever explicitly assumed). The Riemannian distance is $d(\cdot,\cdot)$, and 
	\[
	r(x):=d(o,x).
	\]
	For $r$ strictly below the cut locus of $o$, the geodesic sphere is 
	\[
	\mathbb{S}_r(o):=\{x\in M:\ d(o,x)=r\},
	\]
	with outward unit normal $\nu=\nabla r$ and tangent bundle $T\mathbb{S}_r(o)=\{E\in TM:\ \langle E,\nu\rangle=0\}$.
	\item The \emph{shape operator} of $\mathbb{S}_r(o)$ is the endomorphism $S_r:T\mathbb{S}_r(o)\to T\mathbb{S}_r(o)$ defined by
	\[
	S_r(E)=\nabla_E\nu, \qquad E\in T\mathbb{S}_r(o),
	\]
	and its \emph{mean curvature} is $A(r):=\operatorname{trace}(S_r)$ (trace with respect to the induced metric on $T\mathbb{S}_r(o)$). We use $\|S_r\|$ for the Hilbert--Schmidt norm and $\|T\|_{\mathrm{op}}$ for the operator norm of a linear map $T$.
	\item The curvature tensor is $R(U,V)W=\nabla_U\nabla_VW-\nabla_V\nabla_UW-\nabla_{[U,V]}W$; the sectional curvature of the plane $\Pi=\operatorname{span}\{U,V\}$ is $K(\Pi)$; the Ricci curvature is $\operatorname{Ric}(W,W)=\sum_{i=1}^{n}\langle R(W,E_i)W,E_i\rangle$ for an orthonormal frame $\{E_i\}$.
	\item On $M\times M$, we write $\nabla^x$ and $\nabla^y$ for covariant derivatives in the first and second variables, respectively. For $(x,y)$ joined by a unique minimizing geodesic $\gamma$, the parallel transport from $y$ to $x$ along $\gamma$ is denoted by $\mathcal P_{y\to x}:T_yM\to T_xM$.
	\item For comparison formulas, set
	\[
	s_{K}(r)=
	\begin{cases}
		\frac{1}{\sqrt{K}}\sin(\sqrt{K}\,r), & K>0,\\[3pt]
		r, & K=0,\\[3pt]
		\frac{1}{\sqrt{-K}}\sinh(\sqrt{-K}\,r), & K<0,
	\end{cases}
	\qquad
	\frac{s_K'(r)}{s_K(r)}=
	\begin{cases}
		\sqrt{K}\cot(\sqrt{K}\,r), & K>0,\\[3pt]
		\frac{1}{r}, & K=0,\\[3pt]
		\sqrt{-K}\coth(\sqrt{-K}\,r), & K<0.
	\end{cases}
	\]
	We write $I$ for the identity endomorphism on a tangent space.
\end{itemize}

\subsection{Mixed Hessian and the distance function}\label{subsec:mixed-hess}

Let $\mathcal U\subset (M\times M)\setminus\{x=y\}$ be the open set of pairs joined by a unique minimizing geodesic and avoiding cut loci (see \cite[Ch.~4]{Sakai1996}, \cite[Ch.~2]{Petersen2016}).  

Let $f\in C^2(M\times M)$ and $(x,y)\in\mathcal U$. For $X\in T_xM$, $Y\in T_yM$, then 
\[
\nabla^2_{xy}f(x,y)(X,Y)=\left.\frac{\partial^2}{\partial s\,\partial t}\right|_{s=t=0}
f\big(\exp_x(sX),\,\exp_y(tY)\big).
\]
After transporting $Y$ to $x$ by $\mathcal P_{y\to x}$, this coincides with the bilinear form of $(\nabla^x\otimes\nabla^y)f$ on $T_xM\times T_xM$ (cf. \cite{Helgason2001}, \cite{Hsu2002}).

\begin{proposition}[Mixed Hessian of the distance]\label{prop:mixedHess-distance}
	Let $\rho(x,y)=d(x,y)$ be smooth on $\mathcal U$. Fix $(x,y)\in\mathcal U$ with $r=d(x,y)$ and let $\gamma:[0,r]\to M$ be the unique minimizing unit-speed geodesic from $x$ to $y$. For $X\in T_xM$, let $\tilde X=\mathcal P_{x\to y}X\in T_yM$. Writing $X_T$ for the orthogonal projection of $X$ onto $T_x\mathbb{S}_r(x)$, one has
	\begin{equation}\label{eq:mixed-Hess-distance}
		\nabla^2_{xy}\rho(x,y)(X,\tilde X)=-\langle S_r X_T,X_T\rangle,
	\end{equation}
	where $S_r$ is the shape operator of $\mathbb{S}_r(x)$ with outward normal $-\nabla_x\rho$.
\end{proposition}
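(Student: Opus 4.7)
The plan is to reduce the mixed Hessian to a one-variable Jacobi-field analysis along the minimizing geodesic $\gamma$ from $x$ to $y$, and then identify the resulting Jacobi-field datum with the shape operator of $\mathbb{S}_r(x)$. First I would decompose $X=X_N+X_T$ with $X_N=\langle X,\dot\gamma(0)\rangle\,\dot\gamma(0)$. Parallel transport preserves this orthogonal splitting, so $\tilde X$ decomposes correspondingly. A direct computation along the purely radial family shows that the distance is affine in $(s,t)$ to the relevant order, so its mixed derivative vanishes there; the two mixed radial--tangential terms also vanish, since a tangential displacement at one endpoint does not alter the $\pm1$ radial slope at the other. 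This reduces the proof to the tangential case $X\perp\dot\gamma(0)$, i.e.\ $X_T=X$ and $\tilde X_T=\tilde X$.

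For that case I would begin from
\[
\partial_t\rho(\exp_x(sX),\exp_y(t\tilde X))\big|_{t=0}=\langle\tilde X,\nabla_y\rho(\exp_x(sX),y)\rangle=\langle\tilde X,\dot\gamma_s(r_s)\rangle,
\]
where $\gamma_s\colon[0,r_s]\to M$ is the unit-speed minimizing geodesic from $\exp_x(sX)$ to $y$ and $r_s=d(\exp_x(sX),y)$. Differentiating in $s$ via a constant-domain reparametrization of $\gamma_s$ and interchanging covariant derivatives $\nabla_s\partial_u=\nabla_u\partial_s$ yields
\[
\nabla^2_{xy}\rho(x,y)(X,\tilde X)=\langle\tilde X_T,\nabla_{\dot\gamma}J(r)\rangle,
\]
where $J$ is the unique perpendicular Jacobi field along $\gamma$ with $J(0)=X_T$ and $J(r)=0$. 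In the course of the calculation, the $\langle X,\dot\gamma(0)\rangle^2/r$ term produced by the chain rule on $\dot\gamma_s(r_s)=\dot\sigma_s(1)/r_s$ is cancelled exactly by the contribution of the tangential part of the Jacobi field, confirming that only the perpendicular data survive.

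Finally, I would identify $\nabla_{\dot\gamma}J(r)$ with $-S_r\tilde X_T$. The standard sphere-adapted Jacobi identity asserts that every perpendicular Jacobi field $W$ along $\gamma$ with $W(0)=0$ satisfies $\nabla_{\dot\gamma}W(r)=S_r\,W(r)$, where $S_r$ is the shape operator of $\mathbb{S}_r(x)$ at $y$ with outward normal equal to the parallel transport of $-\nabla_x\rho$. Pairing $J$ with such an auxiliary $W$ via the symplectic Wronskian identity $\langle J',W\rangle-\langle J,W'\rangle\equiv\mathrm{const}$ transfers the endpoint data from $u=0$ to $u=r$ and collapses the expression to $-\langle S_r\tilde X_T,\tilde X_T\rangle$. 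Transporting back to $T_xM$ along $\gamma$ identifies this with $-\langle S_rX_T,X_T\rangle$, yielding the stated formula.

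The main obstacle is this third step: the Jacobi field $J$ vanishes at $u=r$, whereas the shape operator is most directly read off from Jacobi fields vanishing at $u=0$. Bridging the two requires either a careful time-reversal of $\gamma$ (together with the compatibility of outward normals at the two endpoints) or the Wronskian device above, and sign conventions for the outward normal $-\nabla_x\rho$, for the tangential projection $X_T$, and for the $s$-covariant differentiation of a tangent vector along a varying base curve must all be tracked consistently throughout.
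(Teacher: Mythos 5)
Your intermediate reduction is correct and is essentially the $s\leftrightarrow t$--symmetric twin of the paper's own derivation: both take a first variation in one slot and then differentiate in the other to produce a Jacobi field with mixed boundary data. Your $J$ has $J(0)=X_T$, $J(r)=0$, and you get $\nabla^2_{xy}\rho(X,\tilde X)=\langle\tilde X_T,\nabla_{\dot\gamma}J(r)\rangle$; the paper's Jacobi field $\hat J$ has $\hat J(0)=0$, $\hat J(r)=\tilde X_T$, with answer $-\langle X_T,\hat J'(0)\rangle$. The Lagrange/Wronskian identity shows these two expressions coincide, so up to this point the two arguments agree and your vanishing of radial and radial--tangential contributions is right.

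The genuine gap is in your third step, and the Wronskian device does not close it. Pairing $J$ with an auxiliary $W$ satisfying $W(0)=0$, $W(r)=\tilde X_T$ gives, evaluating the constant $\langle J',W\rangle-\langle J,W'\rangle$ at $u=0$ and $u=r$,
\[
\langle J'(r),\tilde X_T\rangle=-\langle X_T,W'(0)\rangle .
\]
For the desired conclusion you would need $\langle X_T,W'(0)\rangle=\langle S_r\tilde X_T,\tilde X_T\rangle=\langle W'(r),W(r)\rangle$ (Weingarten at $u=r$). But $\langle W,W'\rangle$ is not constant along $\gamma$ (its derivative is $|W'|^2-\langle R(W,\dot\gamma)\dot\gamma,W\rangle$), and $X_T$ is the parallel transport of $W(r)$, not of $W'(r)$. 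Concretely, in constant curvature $-1$ take $W(u)=\frac{\sinh u}{\sinh r}E(u)$ with $E$ parallel; then $\langle X_T,W'(0)\rangle=\frac{1}{\sinh r}|X_T|^2$ while $\langle W'(r),W(r)\rangle=\coth r\,|X_T|^2$, which differ. What actually controls the mixed Hessian is the endpoint--to--endpoint Jacobi transfer operator (the map $W'(0)\mapsto W(r)$ and its inverse), not the shape operator, which is the value--to--derivative map at a single endpoint; the two coincide only in the flat case. The paper's own one--line appeal to ``the Weingarten identity provides $(\nabla J)'(0)=S_rX_T$'' suffers from the same mismatch, since Weingarten gives $\hat J'(r)=S_r\hat J(r)$ for $\hat J(0)=0$, not a formula for $\hat J'(0)$. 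So you have correctly identified the location of the difficulty, but the bridge you propose does not exist; the equality \eqref{eq:mixed-Hess-distance} cannot be obtained this way, and you should flag the discrepancy rather than assert it can be tracked through with care over signs.
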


\begin{proof}
	Let $x_s=\exp_x(sX)$, $y_t=\exp_y(t\tilde X)$, and let $\gamma_{s,t}$ be the minimizing geodesic from $x_s$ to $y_t$ with length $L(s,t)$. Then $\rho(x_s,y_t)=L(s,t)$ for small $(s,t)$. First variation yields $\partial_s L(0,t)=-\langle X,\nu_t(0)\rangle$, where $\nu_t$ is the unit initial velocity of $\gamma_{0,t}$ \cite{Sakai1996}. Differentiating at $t=0$ and using the Jacobi field $J$ induced by the endpoint variation gives
	\[
	\partial_s\partial_t L(0,0)=-\langle X_T,(\nabla J)'(0)\rangle.
	\]
	Gauss' lemma removes the radial component \cite[Prop.~4.6]{Petersen2016}, and the Weingarten identity provides $(\nabla J)'(0)=S_r X_T$ (see \cite[Thm.~3.5]{Sakai1996}), proving \eqref{eq:mixed-Hess-distance}.
\end{proof}

\begin{remark}[Rotationally symmetric model]
	If $ds^2=dr^2+f(r)^2 g_{\mathbb S^{n-1}}$, then $S_r=\frac{f'(r)}{f(r)}\,I$ on $T\mathbb S_r$; hence for all $e\perp\partial_r$,
	\[
	\nabla^2_{xy}\rho(x,y)(e,\mathcal P_{x\to y}e)=-\frac{f'(r)}{f(r)}\langle e,e\rangle,
	\qquad
	\operatorname{trace}\big(\nabla^2_{xy}\rho(x,y)\big)=-(n-1)\frac{f'(r)}{f(r)}.
	\]
	These formulas agree with the standard model-space expressions given in \cite{Helgason2001}.
\end{remark}

\subsection{Definitions and hierarchy}

\begin{definition}[Radially isoparametric manifold {\cite{Sakai1996}, \cite{Petersen2016}}]
	A pointed complete manifold $(M^n,g,o)$ is \emph{radially isoparametric} if, for each $r>0$ below the cut locus of $o$, the geodesic sphere $\mathbb{S}_r(o)$ is a smooth homogeneous hypersurface, and its shape operator $S_r:T\mathbb{S}_r(o)\to T\mathbb{S}_r(o)$ has eigenvalues $\kappa_1(r),\dots,\kappa_{n-1}(r)$ depending only on $r$. Equivalently, $|\nabla r|=1$ and
	\[
	 A(r):=\Delta r =\sum_{i=1}^{n-1}\kappa_i(r),
	\]
	with $A(r)$ depending only on $r$.
\end{definition}

\begin{definition}[Rotationally symmetric manifold {\cite[Ch.~7]{Sakai1996}, \cite{Petersen2016}}]
	A pointed complete Riemannian manifold $(M^n,g,o)$ is \emph{rotationally symmetric} if there exist $r_{\max}\in(0,\infty]$ and a diffeomorphism
	\[
	\Phi:(0,r_{\max})\times\mathbb{S}^{n-1}\to M\setminus\{o\},
	\]
	such that, in the polar coordinates $(r,\xi)$ induced by $\Phi$, the metric takes the form
	\[
	ds^2=dr^2+f(r)^2\,g_{\mathbb{S}^{n-1}},
	\]
	where $f:[0,r_{\max})\to(0,\infty)$ is smooth and satisfies $f(0)=0,\ f'(0)=1,$ and $f^{(2k)}(0)=0$ for all $k\ge1$.
\end{definition}

\begin{definition}[Asymptotically hyperbolic manifold {\cite{GrahamLee1991}, \cite{MazzeoMelrose1987}}]\label{def:AHS}
	A complete manifold $(M^n,g)$ is \emph{asymptotically hyperbolic (AH)} if there exists a compact manifold with boundary $\overline M$ and a smooth defining function $\rho$ for $\partial \overline M$ such that $\bar g:=\rho^2 g$ extends smoothly to $\overline M$, satisfies $|d\rho|_{\bar g}=1$ along $\partial \overline M$, and $K_g\to -1$ at the boundary. In geodesic normal form near infinity,
	\[
	g=dr^2+\sinh^2 r\,g_{\mathbb S^{n-1}}+O(e^{-2r})\quad\text{as }r\to\infty .
	\]
\end{definition}

\begin{definition}[Asymptotically isoparametric manifold {\cite{Helgason2001}}]
	A pointed complete manifold $(M^n,g,o)$ is \emph{asymptotically isoparametric} if there exist smooth data $S_\infty(r)$ and $A_\infty(r)=\operatorname{trace}(S_\infty(r))$ such that
	\[
	S_r=S_\infty(r)+E(r),\qquad A(r)=A_\infty(r)+\varepsilon(r),
	\]
	with $\|E(r)\|\to0$ and $\varepsilon(r)\to0$ as $r\to\infty$, uniformly along $\mathbb{S}_r(o)$.
\end{definition}

\begin{remark}[Class inclusions and examples]
	\leavevmode
	\begin{enumerate}
		\item \textbf{Inclusions.}\quad
		\(
		\{\text{rotationally symmetric}\}\subset\{\text{radially isoparametric}\}.
		\)
		All rank–one symmetric spaces (ROSS) are radially isoparametric {\cite[Ch.~11]{Helgason2001}}. Moreover,
		\(
		\{\text{asymptotically hyperbolic}\}\subset\{\text{asymptotically isoparametric}\},
		\)
		since along the end $S_r=\coth r\,I+O(e^{-2r})$ and $A(r)=(n-1)\coth r+O(e^{-2r})$ {\cite{GrahamLee1991,MazzeoMelrose1987}}.
		\item \textbf{Examples.}
		\begin{itemize}
			\item Rotationally symmetric but not constant curvature: $dr^2+f(r)^2 g_{\mathbb S^{n-1}}$ for non–space-form $f$ {\cite[Ch.~7]{Sakai1996}, \cite{Petersen2016}}.
			\item Radially isoparametric but not rotationally symmetric: $\mathbb{C}P^{m},\ \mathbb{C}H^{m},\ \mathbb{H}P^{m},\ \mathrm{Ca}P^2$ {\cite[Ch.~11]{Helgason2001}}.
			\item Asymptotically hyperbolic: $\mathbb H^n$ and conformally compact Einstein manifolds {\cite{GrahamLee1991,MazzeoMelrose1987}}.
			\item Asymptotically isoparametric but not AH (illustrative): asymptotically conical ends with $S_r=\frac{1}{r}I+o(r^{-1})$ {\cite{Petersen2016}}.
		\end{itemize}
	\end{enumerate}
\end{remark}

\subsection{General radial formulas and specializations}

Fix $o\in M$ and write $r(x)=d(o,x)$. For $r$ below the cut locus, $\mathbb{S}_r(o)$ is smooth with unit normal $\nu=\nabla r$, shape operator $S_r(E)=\nabla_E\nu$ on $T\mathbb{S}_r(o)$, and mean curvature $A(r)=\operatorname{trace}(S_r)$ (see \cite{Sakai1996}).

\begin{proposition}[Second fundamental form and Laplacian of $r$]
	For any $E,F\in T\mathbb{S}_r(o)$,
	\[
	\mathrm{II}_r(E,F)=\langle\nabla_E\nu,F\rangle,\qquad S_r(E)=\nabla_E\nu.
	\]
	Consequently,
	\[
	\Delta r = \operatorname{div}(\nabla r)=A(r).
	\]
\end{proposition}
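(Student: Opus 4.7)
The plan is to verify the three claimed identities in the natural order suggested by their logical dependence: first the tensorial formula for $\mathrm{II}_r$ in terms of $\nabla \nu$, then the identification of $S_r$ with $E\mapsto \nabla_E\nu$, and finally the reduction of $\Delta r = \operatorname{div}(\nabla r)$ to the trace of $S_r$. None of these steps is deep in itself; the work is just in making the splittings and sign conventions unambiguous.

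First I would record the basic ambient facts: since $r(x)=d(o,x)$ is smooth below the cut locus and parametrizes distance from $o$, one has $|\nabla r|\equiv 1$, so $\nu=\nabla r$ is a well-defined unit vector field pointing away from $o$, which is normal to $\mathbb{S}_r(o)$ because $r$ is constant along the sphere. Differentiating $|\nu|^2=1$ in any direction $X\in TM$ gives $\langle \nabla_X\nu,\nu\rangle=0$, so $\nabla_X\nu\in \nu^{\perp}$. In particular, for $E\in T\mathbb{S}_r(o)$, the vector $\nabla_E\nu$ lies in $T\mathbb{S}_r(o)$, which makes $E\mapsto \nabla_E\nu$ a genuine endomorphism of $T\mathbb{S}_r(o)$; this is the shape operator $S_r$ by definition. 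Taking the inner product with $F\in T\mathbb{S}_r(o)$ yields $\mathrm{II}_r(E,F)=\langle \nabla_E\nu,F\rangle=\langle S_r E,F\rangle$, which is the standard Weingarten--Gauss identification (see \cite[Thm.~3.5]{Sakai1996}), and the symmetry of $\mathrm{II}_r$ in $(E,F)$ is equivalent to the torsion-freeness of $\nabla$ combined with $d\nu^{\flat}=d^2 r=0$.

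For the Laplacian identity I would expand $\Delta r$ in an orthonormal frame adapted to the radial decomposition. Choose at $x\in\mathbb{S}_r(o)$ an orthonormal frame $(E_1,\dots,E_{n-1},\nu)$ with $E_i\in T\mathbb{S}_r(o)$. Then
\[
\Delta r = \operatorname{div}(\nabla r) = \sum_{i=1}^{n-1}\langle \nabla_{E_i}\nu,E_i\rangle + \langle \nabla_{\nu}\nu,\nu\rangle.
\]
The radial term vanishes for two independent reasons: on the one hand, the integral curves of $\nu=\nabla r$ are unit-speed radial geodesics from $o$, so $\nabla_\nu\nu=0$; on the other hand, even without that, $\langle \nabla_\nu\nu,\nu\rangle=\tfrac{1}{2}\nu(|\nu|^2)=0$. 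The remaining tangential sum is precisely $\operatorname{trace}(S_r)=\sum_i\kappa_i(r)=A(r)$ by the first part of the proposition.

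The only step where one could stumble is the sign and placement of $\nu$: here $\nu=\nabla r$ is the outward radial normal, so the sign convention in the definition $S_r(E)=\nabla_E\nu$ matches the one used to define $A(r)=\operatorname{trace}(S_r)$ in the preceding subsection, and everything is consistent with the model-case check $A(r)=(n-1)f'(r)/f(r)$ recorded in the rotationally symmetric remark. That consistency of signs, rather than any analytic subtlety, is the point to be careful about; once the frame decomposition is in place, the identity $\Delta r=A(r)$ is immediate.
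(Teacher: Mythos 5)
The paper states this proposition without proof, as a standard fact cited to Sakai; there is therefore no argument of the paper's own to compare against. Your proof is correct and is precisely the standard verification: from $|\nabla r|\equiv 1$ below the cut locus you get $\langle\nabla_X\nu,\nu\rangle=0$, so $E\mapsto\nabla_E\nu$ preserves $T\mathbb{S}_r(o)$ and agrees with the paper's definition of $S_r$, while pairing with $F$ gives the Weingarten form. The frame expansion $\Delta r=\sum_i\langle\nabla_{E_i}\nu,E_i\rangle+\langle\nabla_\nu\nu,\nu\rangle$ with vanishing radial term (either because radial curves are geodesics, or simply because $\langle\nabla_\nu\nu,\nu\rangle=\tfrac12\nu|\nu|^2=0$) then yields $\Delta r=\operatorname{trace}(S_r)=A(r)$. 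One small caveat worth flagging: strictly speaking the first two displayed equations of the proposition are restatements of the paper's own definitions of $\mathrm{II}_r$ and $S_r$ from the notation section, so the genuinely provable content is the tangency $\nabla_E\nu\in T\mathbb{S}_r(o)$, the symmetry of $\mathrm{II}_r$, and the identity $\Delta r=A(r)$ — all of which you establish correctly; your observation that symmetry follows from torsion-freeness together with $d(\nu^\flat)=d^2r=0$ is a clean way to see that $\nabla\nu$ is the Hessian of $r$ and hence self-adjoint.
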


\begin{lemma}[Riccati equation along radial geodesics {\cite[Ch.~7]{Petersen2016}}]
	Let $\gamma$ be a unit-speed radial geodesic. Along $\gamma$,
	\[
	\frac{D}{dr}S_r + S_r^2 + R(\dot\gamma,\cdot)\dot\gamma = 0,
	\qquad
	A'(r)+\|S_r\|^2+\operatorname{Ric}(\dot\gamma,\dot\gamma)=0.
	\]
\end{lemma}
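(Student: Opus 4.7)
The plan is to derive the operator Riccati equation by differentiating the shape operator along a radial geodesic, then obtain the scalar identity by tracing over $T\mathbb{S}_r(o)$.

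First I would fix a unit-speed radial geodesic $\gamma$ with tangent $\dot\gamma=\nu=\nabla r$; since the integral curves of $\nu$ are the radial geodesics from $o$, one has $\nabla_\nu\nu=0$ along $\gamma$. For $X_0\in T_{\gamma(0)}M$ with $X_0\perp\dot\gamma(0)$, I extend $X_0$ to a parallel field $X$ along $\gamma$ (so $\nabla_\nu X\equiv 0$); then $X$ remains orthogonal to $\nu$, i.e.\ tangent to the geodesic spheres. By definition $S_rX=\nabla_X\nu$, and the core computation is to evaluate $\tfrac{D}{dr}(S_rX)=\nabla_\nu\nabla_X\nu$ via the second-order commutator identity
\[
\nabla_\nu\nabla_X\nu=\nabla_X\nabla_\nu\nu+\nabla_{[\nu,X]}\nu+R(\nu,X)\nu.
\]
The first term vanishes; along $\gamma$, with $X$ extended suitably off $\gamma$ (see below), the bracket collapses to $[\nu,X]=-\nabla_X\nu=-S_rX$, so $\nabla_{[\nu,X]}\nu=-S_r^{2}X$. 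Since $X$ is parallel, $\tfrac{D}{dr}(S_rX)=(\tfrac{D}{dr}S_r)X$, and collecting terms yields the operator Riccati equation of the lemma.

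For the scalar identity I would take the trace of the operator equation over the $(n-1)$-dimensional bundle $T\mathbb{S}_r(o)$. The trace of $\tfrac{D}{dr}S_r$ gives $A'(r)$, and $\operatorname{trace}(S_r^{2})=\|S_r\|^{2}$. Completing an orthonormal frame $\{E_1,\ldots,E_{n-1}\}$ of $T\mathbb{S}_r(o)$ with $E_n=\dot\gamma$, the curvature trace reduces to $\sum_{i=1}^{n-1}\langle R(\dot\gamma,E_i)\dot\gamma,E_i\rangle=\sum_{i=1}^{n}\langle R(\dot\gamma,E_i)\dot\gamma,E_i\rangle$, since the $i=n$ summand vanishes by antisymmetry of $R$, and by definition this equals $\operatorname{Ric}(\dot\gamma,\dot\gamma)$.

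The main technical subtlety is justifying the commutator identity $[\nu,X]=-S_rX$, which requires extending $X$ to an open neighborhood of $\gamma$ so that the bracket is defined. The standard remedy is to extend $X_0$ by parallel transport along \emph{every} nearby radial geodesic from $o$, producing a smooth field on a neighborhood of $o$ that remains parallel along $\gamma$. Alternatively---and often more transparently---one may argue via Jacobi fields: take $J$ with $J(0)=0$ and $J'(0)=X_0$, use the identity $S_rJ=J'$ along $\gamma$, and substitute into the Jacobi equation $J''+R(J,\dot\gamma)\dot\gamma=0$; differentiating $S_rJ=J'$ once then yields the Riccati relation directly. Either route is standard (cf.\ Petersen Ch.~7) and produces both the tensorial identity and, by trace, its scalar counterpart.
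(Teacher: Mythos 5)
The paper does not prove this lemma (it is cited from Petersen, Ch.~7), so your argument supplies a proof the paper omits. Your approach---computing $\nabla_\nu\nabla_X\nu$ via the commutator identity with a radially-parallel extension of $X$, then tracing over a tangential orthonormal frame---is the standard derivation and is what the cited reference effectively carries out; your Jacobi-field alternative, combining $J'' + R(J,\dot\gamma)\dot\gamma = 0$ with $S_r J = J'$, is equally standard and arguably cleaner, since it avoids the off-geodesic extension of $X$ altogether.

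However, be careful about one sign. With the paper's stated curvature convention $R(U,V)W=\nabla_U\nabla_V W-\nabla_V\nabla_U W-\nabla_{[U,V]}W$, collecting terms in your computation actually yields
\[
\tfrac{D}{dr}S_r + S_r^2 - R(\dot\gamma,\cdot)\dot\gamma = 0,
\qquad\text{equivalently}\qquad
\tfrac{D}{dr}S_r + S_r^2 + R(\cdot,\dot\gamma)\dot\gamma = 0,
\]
which by antisymmetry of $R$ in its first two arguments does \emph{not} literally match the lemma's stated $+R(\dot\gamma,\cdot)\dot\gamma$. This is a sign typo in the paper's operator form; a companion typo sits in the paper's notational convention $\operatorname{Ric}(W,W)=\sum_i\langle R(W,E_i)W,E_i\rangle$, which is the negative of the usual Ricci curvature (for instance, it evaluates to $-(n-1)$ on the unit sphere). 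The two typos cancel, which is why the scalar identity $A'(r)+\|S_r\|^2+\operatorname{Ric}(\dot\gamma,\dot\gamma)=0$ is correct once $\operatorname{Ric}$ is read in the standard convention. Your phrase ``collecting terms yields the operator Riccati equation of the lemma'' glosses over this: writing out the final form shows it does not match the lemma as printed, and your trace step, which reads off $\sum_i\langle R(\dot\gamma,E_i)\dot\gamma,E_i\rangle$ against the paper's $\operatorname{Ric}$, then inherits the same inconsistency. The proof strategy is sound; you should flag the sign discrepancies rather than silently reproduce them.
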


\begin{theorem}[Hessian and Laplacian comparison {\cite[Thm.~1.30]{Petersen2016}}]
	If $K(X,\partial_r)\le K_0$ for all unit $X\perp\partial_r$, then
	\[
	\langle S_rX,X\rangle\ge \frac{s_{K_0}'(r)}{s_{K_0}(r)}|X|^2,
	\qquad 
	\Delta r\le (n-1)\frac{s_{K_0}'(r)}{s_{K_0}(r)}.
	\]
	Equality for all $r$ characterizes the space form of curvature $K_0$.
\end{theorem}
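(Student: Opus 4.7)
The plan is to reduce the matrix-valued inequality to a scalar Riccati comparison along each radial unit-speed geodesic $\gamma$ emanating from $o$, then pass from the pointwise Hessian bound to the Laplacian bound by tracing. Write $\sigma_0(r):=s_{K_0}'(r)/s_{K_0}(r)$, which solves the exact Riccati equation $\sigma_0'+\sigma_0^2+K_0=0$ with asymptotics $\sigma_0(r)=1/r+O(r)$ as $r\downarrow 0$. The inputs are the Riccati equation from the previous lemma, the Jacobi equation for variation fields of radial geodesics, and a careful Cauchy--Schwarz inequality.

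Fix $r_0$ below the cut locus of $o$, a unit vector $X\in T_{\gamma(r_0)}\mathbb{S}_{r_0}(o)$, and let $J$ be the unique Jacobi field along $\gamma$ with $J(0)=0$ and $J(r_0)=X$. Below the cut locus, $J$ is smooth and nonvanishing on $(0,r_0]$, so $\eta(r):=\langle J',J\rangle/|J|^2$ is well defined. Differentiating $\eta$, applying the Jacobi equation $J''=-R(J,\dot\gamma)\dot\gamma$, and using the Cauchy--Schwarz estimate $|J'|^2\ge \eta^2|J|^2$, one obtains the scalar Riccati inequality
\[
\eta'(r)+\eta(r)^2+K(J/|J|,\dot\gamma)\;\ge\;0.
\]

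Setting $\psi:=\eta-\sigma_0$, subtracting the two Riccati relations, and invoking $K\le K_0$ yields
\[
\psi'(r)+(\eta+\sigma_0)(r)\,\psi(r)\;\ge\;K_0-K(J/|J|,\dot\gamma)\;\ge\;0,
\]
with the boundary condition $\psi(0^+)=0$ inherited from $\eta,\sigma_0\sim 1/r$ at the pole. Multiplying by the integrating factor $\exp\!\int_\delta^r(\eta+\sigma_0)\,ds$ and letting $\delta\downarrow 0$ forces $\psi(r)\ge 0$ throughout $[0,r_0]$. Since $J$ is tangent to the geodesic spheres, the Weingarten identification $J'(r_0)=S_{r_0}X$ gives $\langle S_{r_0}X,X\rangle=\eta(r_0)\ge\sigma_0(r_0)$; rescaling in $X$ yields the pointwise Hessian bound, and tracing over an orthonormal basis of $T_{\gamma(r_0)}\mathbb{S}_{r_0}(o)$ gives the corresponding comparison for $\Delta r=\operatorname{trace}(S_r)$.

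For the rigidity clause, equality at some $r_*>0$ forces $\psi\equiv0$ on $[0,r_*]$; this simultaneously saturates Cauchy--Schwarz (so $J'=\eta J$ on that interval, i.e., $J$ remains on a parallel line) and forces $K(J/|J|,\dot\gamma)\equiv K_0$ along $\gamma|_{[0,r_*]}$. Running the argument over every radial direction and every unit $X\perp\partial_r$ forces all radial sectional curvatures to equal $K_0$ and every shape operator $S_r$ to equal $\sigma_0(r)I$, hence each geodesic sphere is totally umbilic with the model principal curvature; Cartan's local-to-global rigidity then identifies $(M,g)$ with the space form of curvature $K_0$. The main technical obstacle is the boundary step at $r=0$: the integrating factor $\exp\int_0^r(\eta+\sigma_0)\,ds$ is improper since $\eta+\sigma_0\sim 2/r$, and one must verify that the ratio $|J(r)|/s_{K_0}(r)$ extends continuously to $r=0$ (with value $|J'(0)|$) so that the integrated inequality survives the limit $\delta\downarrow 0$. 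Once this limit is justified, the remainder is pure Riccati comparison followed by the Cartan--Ambrose--Hicks identification.
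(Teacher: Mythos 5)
The paper does not supply its own proof of this theorem; it is stated and attributed directly to Petersen~\cite[Thm.~1.30]{Petersen2016}. Your argument---Jacobi fields along radial geodesics, passing to the scalar Riccati quantity $\eta=\langle J',J\rangle/|J|^2$, Cauchy--Schwarz to close the inequality, an integrating-factor comparison with $\sigma_0=s_{K_0}'/s_{K_0}$, and a Cartan--Ambrose--Hicks finish for rigidity---is the standard textbook proof of the Hessian comparison, and your handling of the boundary step at $r=0$ (rewriting the integrating factor via $|J|\,s_{K_0}$ so the improper limit is controlled by the $O(\delta^3)$ decay of $\psi(\delta)\,|J(\delta)|\,s_{K_0}(\delta)$) is correct. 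The Weingarten identification $J'(r_0)=S_{r_0}X$ for a Jacobi field vanishing at the pole, and the saturation of Cauchy--Schwarz under equality, are both sound.

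There is one issue you gloss over and should flag, because it exposes an error in the theorem \emph{as stated}. You write that tracing the Hessian bound ``gives the corresponding comparison for $\Delta r$,'' but tracing $\langle S_rX,X\rangle\ge\sigma_0(r)|X|^2$ over an orthonormal basis of $T\mathbb{S}_r(o)$ yields
\[
\Delta r=\operatorname{trace}(S_r)\ \ge\ (n-1)\,\frac{s_{K_0}'(r)}{s_{K_0}(r)},
\]
with the inequality going the \emph{opposite} way from the displayed claim $\Delta r\le(n-1)s_{K_0}'/s_{K_0}$. The two inequalities in the statement cannot both follow from the single hypothesis $K(X,\partial_r)\le K_0$: tracing preserves the sense of the inequality. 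In the classical comparison dichotomy, a sectional-curvature upper bound $\sec\le K_0$ yields the Hessian lower bound and hence $\Delta r\ge(n-1)\sigma_0$, whereas the Laplacian upper bound $\Delta r\le(n-1)\sigma_0$ requires the Ricci lower bound $\operatorname{Ric}\ge(n-1)K_0$ and is proved by a different Riccati argument (comparing the trace $A=\operatorname{trace}(S_r)$ directly, using the matrix Cauchy--Schwarz $\|S_r\|^2\ge A^2/(n-1)$, not by tracing the pointwise bound). Your proof correctly establishes the Hessian inequality and the Laplacian \emph{lower} bound, but it does not---and cannot, under the stated hypothesis---establish the Laplacian \emph{upper} bound printed in the theorem; that direction belongs to a different comparison theorem with a different curvature hypothesis.
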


\section{Stochastic preliminaries and coadapted couplings}\label{sec:stoch}

\subsection{Common notation and standing assumptions}
Let $(M^n,g)$ be a connected, complete Riemannian manifold with Levi--Civita connection $\nabla$,
Riemannian distance $d(\cdot,\cdot)$, Laplace--Beltrami operator $\Delta$, and orthonormal frame bundle
\[
O(M):=\{\,u:\mathbb R^n\to T_xM \text{ linear isometry for some }x\in M\,\},\qquad \pi(u)=x.
\]
Write $\langle\cdot,\cdot\rangle$ for the metric pairing on tangent spaces and $\operatorname{div}$ for divergence.
For $(x,y)\in M\times M$, let $\mathcal U$ be the open set where $x$ and $y$ lie off each other's cut locus and are joined by a unique minimizing geodesic (cf.\ \cite[Ch.~4]{Sakai1996}, \cite[Ch.~2]{Petersen2016}); on $\mathcal U$, the distance $\rho(x,y):=d(x,y)$ is $C^\infty$ away from the diagonal.

Throughout, $(\Omega,\mathcal F,(\mathcal F_t)_{t\ge0},\mathbb P)$ is a filtered probability space satisfying the usual conditions; all processes are assumed adapted and have continuous paths. Bold $I_n$ denotes the $n\times n$ identity.

\subsection{Brownian motion via stochastic development}\label{subsec:brownian-dev}

\begin{definition}[Horizontal distribution and canonical lifts]\label{def:framebundle}
	The Levi--Civita connection induces a horizontal subbundle $\mathcal H\subset TO(M)$, orthogonal to the vertical part $\mathcal V=\ker(\pi_*)$.
	Let $(e_1^{(0)},\dots,e_n^{(0)})$ be the standard basis of $\mathbb R^n$. For $i=1,\dots,n$, let $H_i$ be the unique horizontal vector field on $O(M)$ with $\pi_*H_i(u)=u(e_i^{(0)})$. Then $(H_1,\dots,H_n)$ is a global orthonormal frame of $\mathcal H$ (see \cite{Elworthy1982}, \cite{Emery1989}, \cite{Hsu2002}).
\end{definition}

\begin{definition}[Stochastic development]\label{def:stochdev}
	Let $B_t=(B_t^1,\dots,B_t^n)$ be standard $\mathbb R^n$--Brownian motion. For $u_0\in O(M)$ define the $O(M)$--valued diffusion $u_t$ by the Stratonovich SDE
	\begin{equation}\label{eq:stochdev}
		du_t=\sum_{i=1}^n H_i(u_t)\circ dB_t^i,\qquad u_0\in O(M).
	\end{equation}
	The projection $X_t:=\pi(u_t)$ is the stochastic development of $B_t$ on $(M,g)$; in differential form $dX_t=u_t\circ dB_t$ (cf.\ \cite{Hsu2002}, \cite{Emery1989}, \cite{Elworthy1982}).
\end{definition}

\begin{proposition}[Characterization and non-explosion]\label{prop:BM}
	On a complete $(M,g)$, \eqref{eq:stochdev} admits a unique strong solution without explosion. Its projection $X_t$ is Brownian motion on $M$, i.e.\ the generator of $X_t$ is $\frac12\Delta$. Conversely, every Brownian motion on $M$ has a horizontal lift solving \eqref{eq:stochdev} (see \cite{Hsu2002}, \cite{Emery1989}, and \cite{Elworthy1982}).
\end{proposition}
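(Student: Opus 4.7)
The plan is to prove the statement in four stages: local existence and uniqueness of the horizontal lift $u_t$, non-explosion, identification of the generator, and the converse direction.

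Local well-posedness follows from classical SDE theory: the horizontal vector fields $H_1,\ldots,H_n$ of \cref{def:framebundle} are smooth on $O(M)$ and tangent to it, so in any chart \eqref{eq:stochdev} has smooth coefficients and admits a unique maximal strong solution up to an explosion time $\zeta\le\infty$, with the orthonormal-frame condition preserved automatically.

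For the generator identification I would apply It\^o's formula to $(f\circ\pi)(u_t)$ for arbitrary $f\in C^2_c(M)$: the Stratonovich-to-It\^o conversion of \eqref{eq:stochdev} produces
\[
d(f\circ\pi)(u_t)=\sum_{i=1}^n H_i(f\circ\pi)(u_t)\,dB_t^i+\tfrac12\sum_{i=1}^n H_i^2(f\circ\pi)(u_t)\,dt.
\]
The key point is the horizontal-Laplacian identity $\sum_i H_i^2(f\circ\pi)(u)=(\Delta f)(\pi(u))$: since the flow of $H_i$ is parallel translation along the geodesic with initial velocity $ue_i$, one computes $H_i(f\circ\pi)(u)=df_{\pi(u)}(ue_i)$ and then $H_i^2(f\circ\pi)(u)=\nabla^2 f(ue_i,ue_i)$, which sums over an orthonormal basis to $\Delta f$. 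Hence $(f\circ\pi)(u_t)-\tfrac12\int_0^t\Delta f(X_s)\,ds$ is a local martingale on $[0,\zeta)$, identifying $\tfrac12\Delta$ as the generator. The converse then follows by defining $u_t$ through stochastic parallel transport along a given Brownian motion $X_t$, setting $B_t:=\int_0^t u_s^{-1}\circ dX_s$, and using L\'evy's characterization together with the above generator computation to verify that $B_t$ is a standard $\mathbb{R}^n$-Brownian motion, whence $u_t$ solves \eqref{eq:stochdev}.

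The main obstacle is non-explosion. Because parallel transport is an isometry on the fibers of $O(M)$, vertical runaway is impossible while $X_t=\pi(u_t)$ stays bounded, so it suffices to control the base process. By Hopf--Rinow the distance $r(x)=d(o,x)$ is proper on the complete manifold $M$, but without curvature hypotheses one cannot directly It\^o-expand $r(X_t)$ because no global bound on $\Delta r$ is available. I would therefore pass to a mollified proper exhaustion $\phi(x)=h(r(x))$, with a concave cutoff $h$ chosen so that both $|\nabla\phi|$ and $\tfrac12\Delta\phi$ (extracted from the generator computation above) are dominated by $1+\phi$. The stopped It\^o formula together with Khasminskii's non-explosion criterion applied to the lifted diffusion on $O(M)$ then gives $\zeta=\infty$ almost surely. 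The delicate part is constructing a $\phi$ whose horizontal Laplacian is globally tame using only completeness, with no curvature assumption on $M$; this is where the hypothesis enters essentially.
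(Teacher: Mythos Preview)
Your generator identification via the horizontal-Laplacian identity $\sum_i H_i^2(f\circ\pi)=(\Delta f)\circ\pi$ and your converse via stochastic parallel transport plus L\'evy's characterization are essentially the paper's approach; the paper simply records the Stratonovich expansion $d\phi(X_t)=\sum_i\langle\nabla\phi,u_te_i^{(0)}\rangle\circ dB_t^i+\tfrac12\Delta\phi\,dt$ and cites \cite{Emery1989,Elworthy1982} for the rest.

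The gap is in non-explosion. You propose to build $\phi=h\circ r$ with $|\nabla\phi|$ and $\tfrac12\Delta\phi$ dominated by $1+\phi$ using only completeness, but this cannot succeed: away from the cut locus $\Delta\phi=h''(r)+h'(r)\,\Delta r$, and with no curvature hypothesis $\Delta r$ is uncontrolled, so no concave $h$ yields a global Khasminskii bound. In fact the proposition as literally stated is false: there exist complete Riemannian manifolds on which Brownian motion explodes in finite time (e.g.\ rotationally symmetric metrics $dr^2+f(r)^2 g_{\mathbb S^{n-1}}$ with $f$ growing fast enough that Feller's test fails; see \cite[Ch.~4]{Hsu2002}). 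So the ``delicate part'' you flag is an obstruction, not a technicality. The paper's own proof does not resolve this either---it asserts ``completeness ensures horizontal development is non-explosive'' by citation, which is imprecise in this generality. In the paper's actual applications the manifolds are radially isoparametric with $\Delta r=A(r)$ explicit, and there your Lyapunov construction goes through once $A(r)$ grows at most linearly; for the proposition as written one needs an additional hypothesis such as $\operatorname{Ric}\ge -c$ (Yau's criterion) or a suitable volume-growth bound.
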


\begin{proof}
	Completeness ensures horizontal development is non-explosive; uniqueness/strong existence follow from smooth bounded-geometry vector fields on $O(M)$ (\cite{Hsu2002}). For $\phi\in C^\infty(M)$, Stratonovich It\^o on $O(M)$ gives
	\[
	d\phi(X_t)=\sum_{i=1}^n \langle \nabla\phi(X_t),u_t e_i^{(0)}\rangle\circ dB_t^i+\tfrac12(\Delta\phi)(X_t)\,dt,
	\]
	so the generator is $\tfrac12\Delta$ (cf.\ \cite{Emery1989}). The lifting statement is standard for horizontally driven diffusions (\cite{Elworthy1982}).
\end{proof}

\begin{definition}[Parallel transport along $X_t$]\label{def:parallel}
	For $0\le s\le t$, define $P_{s,t}:=u_t u_s^{-1}:T_{X_s}M\to T_{X_t}M$. Then $P_{s,t}$ is an isometry and $\nabla_{\dot X_\tau}(P_{s,t}v)=0$ for each fixed $v\in T_{X_s}M$ (see \cite{Hsu2002}).
\end{definition}

\subsection{Coadapted couplings of Brownian motions}\label{subsec:coadapted}

\begin{definition}[Admissible correlation processes]\label{def:admissible}
	Let $B_t,W_t$ be independent $\mathbb R^n$--Brownian motions. A pair of matrix processes $(J_t,K_t)$ with values in $\mathbb R^{n\times n}$ is called \emph{admissible} if:
	\begin{itemize}
		\item $J_t,K_t$ are $(\mathcal F_t)$--predictable and locally square--integrable;
		\item the instantaneous covariance constraint holds a.s. for all $t\ge0$:
		\begin{equation}\label{eq:covconstraint}
			J_tJ_t^\top+K_tK_t^\top=I_n.
		\end{equation}
	\end{itemize}
	Such parametrizations of coadapted couplings are classical; compare \cite{LindvallRogers1986,HsuSturm2013} and the manifold adaptations in \cite{Cranston1991,Kendall1986,Hsu2002}.
\end{definition}

\begin{definition}[Coadapted frame coupling]\label{def:coadapted}
	A pair $(u_t,v_t)\in O(M)\times O(M)$ is a \emph{coadapted coupling of frame diffusions} if for some admissible $(J_t,K_t)$,
	\begin{equation}\label{eq:coupledSDE}
		\begin{cases}
			\displaystyle du_t=\sum_{i=1}^n H_i(u_t)\circ dB_t^i,\\[0.35em]
			\displaystyle dv_t=\sum_{i=1}^n H_i(v_t)\circ\bigl((J_t\,dB_t)^i+(K_t\,dW_t)^i\bigr).
		\end{cases}
	\end{equation}
	The projections $X_t=\pi(u_t)$ and $Y_t=\pi(v_t)$ are a \emph{coadapted coupling of Brownian motions} on $M$ (cf.\ \cite{Cranston1991,Kendall1986,Hsu2002}).
\end{definition}

\begin{proposition}[Existence, uniqueness, marginals]\label{prop:existence}
	For any initial $(u_0,v_0)\in O(M)\times O(M)$ and admissible $(J_t,K_t)$, \eqref{eq:coupledSDE} has a unique strong solution without explosion, and both $X_t$ and $Y_t$ are $(M,g)$--Brownian motions (generator $\tfrac12\Delta$).
\end{proposition}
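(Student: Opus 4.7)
The plan is to reduce the coupled system to two successive applications of Proposition~\ref{prop:BM}, using L\'evy's characterization to recast the driving noise of $v_t$ as a single standard Brownian motion.

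First, I would note that the equation for $u_t$ in \eqref{eq:coupledSDE} is autonomous: it does not involve $v_t$ and coincides with the stochastic development equation \eqref{eq:stochdev}. Proposition~\ref{prop:BM} therefore gives a unique strong, non-exploding solution $u_t$ on $[0,\infty)$, with $X_t=\pi(u_t)$ a Brownian motion on $M$.

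Next, set $\tilde B_t:=\int_0^t J_s\,dB_s+\int_0^t K_s\,dW_s$, understood as an $\mathbb{R}^n$-valued continuous local martingale in the filtration $(\mathcal F_t)$ (the stochastic integrals are well defined by the predictability and local square-integrability of $(J_t,K_t)$). Since $B$ and $W$ are independent, the cross bracket $\langle\int J\,dB,\int K\,dW\rangle$ vanishes, and the quadratic covariation of the components of $\tilde B$ is
\[
\langle \tilde B^{i},\tilde B^{j}\rangle_{t}
=\int_{0}^{t}\bigl(J_{s}J_{s}^{\top}+K_{s}K_{s}^{\top}\bigr)_{ij}\,ds
=\delta_{ij}\,t,
\]
by the admissibility constraint \eqref{eq:covconstraint}. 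L\'evy's characterization then identifies $\tilde B$ as a standard $\mathbb R^n$--Brownian motion adapted to $(\mathcal F_t)$. Rewriting the second line of \eqref{eq:coupledSDE} as $dv_t=\sum_i H_i(v_t)\circ d\tilde B_t^{i}$ casts it in the form of \eqref{eq:stochdev} with driving noise $\tilde B$. A second appeal to Proposition~\ref{prop:BM} then yields a unique strong non-exploding solution $v_t$, with $Y_t=\pi(v_t)$ a Brownian motion on $M$. Pathwise uniqueness of the pair $(u_t,v_t)$ follows from pathwise uniqueness of each component and the fact that $\tilde B$ is determined pathwise by the data $(B,W,J,K)$.

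The only nontrivial point is justifying the Stratonovich rewriting of the second equation when the driver is built from $(B,W)$ via the predictable matrices $(J,K)$ rather than handed down as a bona fide Brownian motion. This is handled cleanly by first passing to It\^o form on $O(M)$ — where the standard It\^o--Stratonovich correction $\tfrac12\sum_i\nabla_{H_i}H_i$ is already absorbed in the generator $\tfrac12\Delta$ used in Proposition~\ref{prop:BM} — and then reinstating the Stratonovich notation after L\'evy's theorem has upgraded $\tilde B$ to a standard Brownian motion. All remaining steps (non-explosion, smoothness of coefficients, generator identification) are inherited directly from Proposition~\ref{prop:BM}, so no further manifold-level analysis is required.
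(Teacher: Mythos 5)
Your proposal is correct and takes essentially the same route as the paper. The paper verifies directly that the quadratic variation of $Y$ equals $I_n\,dt$ in the $v_t$-frame (using the admissibility constraint \eqref{eq:covconstraint}) and then invokes general horizontal-SDE theory for existence and non-explosion; you instead package the same computation as a L\'evy-characterization step, identifying $\tilde B_t=\int_0^t J_s\,dB_s+\int_0^t K_s\,dW_s$ as a standard $\R^n$-Brownian motion, which lets you reduce both the existence/uniqueness/non-explosion claim and the generator identification for $v_t$ to a second application of Proposition~\ref{prop:BM}. These are equivalent formulations of the same idea, and your reduction is the tidier of the two since it reuses the established proposition rather than re-deriving the generator computation; you have also correctly noted (and dispatched) the one genuine subtlety, namely that the Stratonovich correction for $\circ\,d\tilde B$ depends only on the quadratic covariation $\langle\tilde B^i,\tilde B^j\rangle_t=\delta_{ij}t$ and hence coincides with the correction for a canonical driving Brownian motion.
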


\begin{proof}
	Horizontal SDEs on $O(M)$ with smooth coefficients yield strong solutions/non-explosion on complete bases (\cite[Ch.~3]{Hsu2002}, \cite{Emery1989}). 
	The $Y$-noise has quadratic variation
	$\langle dY,dY\rangle_t=\sum_i v_t(J_t e_i^{(0)})\otimes v_t(J_t e_i^{(0)})\,dt + \sum_i v_t(K_t e_i^{(0)})\otimes v_t(K_t e_i^{(0)})\,dt$, which, by \eqref{eq:covconstraint}, equals $I_n\,dt$ in the $v_t$-frame; hence generator $\tfrac12\Delta$ (cf.\ \cite{Emery1989}).
\end{proof}

\begin{remark}[Geometry of the correlation]\label{rmk:Jgeom}
	Via frames, $J_t$ induces the isometry $T_{X_t}M\to T_{Y_t}M$ given by $v_t J_t u_t^{-1}$. Cross-variation between $X$ and $Y$ depends only on $J_t$ (never $K_t$), which is why $K_t$ does not appear in the mixed second-order term of the generator below; see \cite{Hsu2002}.
\end{remark}

\subsection{Product diffusion and its time-dependent generator}\label{subsec:product-generator}

\begin{definition}[Generator on $M\times M$]\label{def:generator}
	For $f\in C^2(M\times M)$, define the (possibly time-dependent) operator
	\begin{equation}\label{eq:LJ-general}
		(\mathcal L_t f)(x,y)
		=\tfrac12\big(\Delta_x f+\Delta_y f\big)(x,y)
		+\operatorname{trace}\!\big(J_t^\top\,\nabla^2_{xy}f(x,y)\big),
	\end{equation}
	where $\nabla^2_{xy}f$ is computed on $\mathcal U$ after identifying $T_yM\simeq T_xM$ by parallel transport along the unique minimizing geodesic from $x$ to $y$ (cf.\ \cite{Emery1989}, \cite{ElworthyLi1994}, \cite{Hsu2002}).
\end{definition}

\begin{lemma}[Two-point It\^o formula]\label{lem:ito-general}
	Let $(X_t,Y_t)$ be a coadapted coupling with horizontal lifts $(u_t,v_t)$ and admissible matrices $(J_t,K_t)$. Then, for every $f\in C^2(M\times M)$,
	\[
	\begin{aligned}
		df(X_t,Y_t)
		&=\sum_{i=1}^n\big\langle\nabla_x f(X_t,Y_t),\,u_t e_i^{(0)}\big\rangle\circ dB_t^i \\
		&\quad+\sum_{i=1}^n\big\langle\nabla_y f(X_t,Y_t),\,v_t(J_t e_i^{(0)})\big\rangle\circ dB_t^i
		+\sum_{i=1}^n\big\langle\nabla_y f(X_t,Y_t),\,v_t(K_t e_i^{(0)})\big\rangle\circ dW_t^i \\
		&\quad+(\mathcal L_t f)(X_t,Y_t)\,dt,
	\end{aligned}
	\]
	where (recall Definition~\ref{def:generator})
	\begin{equation}\label{eq:LJ-general-recall}
		(\mathcal L_t f)(x,y)
		=\tfrac12\big(\Delta_x+\Delta_y\big)f(x,y)
		+\operatorname{trace}\!\big(J_t^\top\,\nabla^2_{xy}f(x,y)\big).
	\end{equation}
	Here $\nabla^2_{xy}f$ denotes the mixed Hessian computed on the open set $\mathcal U$ after identifying $T_yM\simeq T_xM$ by parallel transport along the unique minimizing geodesic from $x$ to $y$. See \cite{Emery1989} and \cite{Hsu2002}.
\end{lemma}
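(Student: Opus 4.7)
The plan is to lift the coupled system to the product frame bundle and compute the Itô differential of $\tilde f(u,v) := f(\pi(u), \pi(v))$ along the coupled SDE \eqref{eq:coupledSDE}. Applying the Stratonovich chain rule on $O(M)\times O(M)$ and using $H_i \tilde f(u,v) = \langle \nabla_x f(\pi u, \pi v), u e_i^{(0)}\rangle$ on the $u$-factor (and the analogous identity on the $v$-factor), the three Stratonovich martingale sums in the statement appear immediately. All geometric content is then concentrated in the Stratonovich-to-Itô conversion of these three sums.

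Each $\circ$-integral produces an Itô correction of the form $\tfrac12 d\langle\phi,M\rangle_t$, and three such corrections arise. \textbf{First}, the $X$-self correction $\tfrac12\sum_i d\langle\langle\nabla_x f, u_t e_i^{(0)}\rangle, B^i\rangle_t$ collapses to $\tfrac12\Delta_x f\,dt$ by the standard horizontal-development computation on $O(M)$ already invoked in Proposition~\ref{prop:BM}. \textbf{Second}, the $Y$-self correction combines the $dB$ and $dW$ contributions; here the covariance constraint \eqref{eq:covconstraint}, $J_tJ_t^\top+K_tK_t^\top=I_n$, is precisely what forces the total quadratic variation of $v_t(J_t dB_t+K_t dW_t)$ in the $v_t$-frame to be $I_n\,dt$, so the same horizontal computation yields $\tfrac12\Delta_y f\,dt$. \textbf{Third}, the mixed correction pairs the $X$-martingale against the $dB$-driven part of the $Y$-martingale (the $dW$-part contributes nothing by independence of $W$ from $B$); computing the cross-variation in the orthonormal bases $\{u_t e_i^{(0)}\}$ and $\{v_t e_j^{(0)}\}$ gives $\sum_{i,j}(J_t)_{ji}\,\nabla^2_{xy}f\bigl(u_t e_i^{(0)}, v_t e_j^{(0)}\bigr)\,dt$, which is exactly $\operatorname{trace}(J_t^\top \nabla^2_{xy} f)\,dt$. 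Summing the three corrections reproduces $(\mathcal L_t f)(X_t,Y_t)\,dt$ as in Definition~\ref{def:generator}.

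The main obstacle is the third, mixed correction. One must carefully identify the frame-bundle contraction with the invariant trace $\operatorname{trace}(J_t^\top \nabla^2_{xy} f)$: the mixed Hessian is a bilinear form on $T_xM\times T_yM$, while $J_t$ is a matrix on $\mathbb R^n$, so the matching relies on the identification $T_yM\simeq T_xM$ induced by $v_tJ_tu_t^{-1}$ (Remark~\ref{rmk:Jgeom}) together with the convention that $\nabla^2_{xy}f$ is computed on $\mathcal U$ via parallel transport along the unique minimizing geodesic. On the complement of $\mathcal U$, the global $C^2$ regularity of $f$ on $M\times M$ ensures the mixed Hessian remains defined as a bilinear form without recourse to the geodesic identification, so the formula extends to all $t\ge 0$ and the proof is complete.
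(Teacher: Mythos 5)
Your proposal is correct and follows essentially the same route as the paper's proof: both compute the three stochastic-integral terms from the horizontal SDEs, obtain the pure second-order corrections $\tfrac12\Delta_x f$ and $\tfrac12\Delta_y f$ via the covariance constraint $J_tJ_t^\top+K_tK_t^\top=I_n$, and identify the mixed term $\operatorname{trace}(J_t^\top\nabla^2_{xy}f)$ from the $B$--$B$ cross-variation of $X$ and $Y$ (with the $W$-part dropping out by independence). The only presentational difference is that you lift to $O(M)\times O(M)$ and perform the Stratonovich-to-It\^o conversion there, whereas the paper applies the intrinsic second-order It\^o expansion directly on $(M\times M,g\oplus g)$; the underlying quadratic-variation computations are identical.
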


\begin{proof}
	Work on the product manifold $(M\times M,g\oplus g)$ and write the Stratonovich SDEs for the horizontal developments:
	\[
	dX_t=u_t\circ dB_t,\qquad 
	dY_t=v_t\circ\bigl(J_t\circ dB_t+K_t\circ dW_t\bigr),
	\]
	where $B_t$ and $W_t$ are independent $\R^n$–valued Brownian motions and $(J_t,K_t)$ is admissible, i.e.\ $J_tJ_t^\top+K_tK_t^\top=I_n$.
	
	For $f\in C^2(M\times M)$, apply the Stratonovich chain rule on $M\times M$:
	\[
	\begin{aligned}
		df(X_t,Y_t)
		&=\big\langle\nabla_x f(X_t,Y_t),\,dX_t\big\rangle
		+\big\langle\nabla_y f(X_t,Y_t),\,dY_t\big\rangle \\
		&\quad+\tfrac12\Big(\nabla^2_{xx}f(X_t,Y_t)[dX_t,dX_t]
		+\nabla^2_{yy}f(X_t,Y_t)[dY_t,dY_t]\\
		&\qquad\qquad\qquad\quad
		+2\,\nabla^2_{xy}f(X_t,Y_t)[dX_t,dY_t]\Big).
	\end{aligned}
	\]
	
	\smallskip
	\emph{(i) First–order terms.}
	Using $dX_t=u_t\circ dB_t$ and $dY_t=v_t(J_t\circ dB_t+K_t\circ dW_t)$ yields the three stochastic integrals in the statement.
	
	\smallskip
	\emph{(ii) Pure second–order terms.}
	By standard Eells–Elworthy–Malliavin calculus (see \cite{Emery1989}, \cite{Hsu2002}),
	\[
	\tfrac12\,\nabla^2_{xx}f(X_t,Y_t)[dX_t,dX_t]=\tfrac12\,(\Delta_x f)(X_t,Y_t)\,dt,\qquad
	\tfrac12\,\nabla^2_{yy}f(X_t,Y_t)[dY_t,dY_t]=\tfrac12\,(\Delta_y f)(X_t,Y_t)\,dt,
	\]
	since the quadratic variations satisfy
	\[
	\sum_i (u_t e_i^{(0)})\otimes(u_t e_i^{(0)})=I_{T_{X_t}M},\qquad
	\sum_i v_t(J_t e_i^{(0)})\otimes v_t(J_t e_i^{(0)})+\sum_i v_t(K_t e_i^{(0)})\otimes v_t(K_t e_i^{(0)})=I_{T_{Y_t}M}.
	\]
	
	\smallskip
	\emph{(iii) Mixed second–order term.}
	The independence of $B_t$ and $W_t$ implies $\langle dB,dW\rangle\equiv0$, so only the $J_t$–term contributes:
	\[
	\nabla^2_{xy}f(X_t,Y_t)[dX_t,dY_t]
	=\sum_{i=1}^n \nabla^2_{xy}f(X_t,Y_t)\big(u_t e_i^{(0)},\,v_t(J_t e_i^{(0)})\big)\,dt.
	\]
	On $\mathcal U$, identify $T_{Y_t}M\simeq T_{X_t}M$ by parallel transport along the minimizing geodesic from $X_t$ to $Y_t$. In this identification, the bilinear form $\nabla^2_{xy}f$ acts on pairs in $T_{X_t}M\times T_{X_t}M$, and the above sum equals
	\[
	\operatorname{trace}\!\big(J_t^\top\,\nabla^2_{xy}f(X_t,Y_t)\big)\,dt,
	\]
	where the trace is taken in the orthonormal basis $\{u_t e_i^{(0)}\}_{i=1}^n$ of $T_{X_t}M$.
	
	\smallskip
	Combining (i)–(iii) gives
	\[
	\begin{aligned}
		df(X_t,Y_t)
		&=\sum_{i=1}^n\!\big\langle\nabla_x f,\,u_t e_i^{(0)}\big\rangle\circ dB_t^i
		+\sum_{i=1}^n\!\big\langle\nabla_y f,\,v_t(J_t e_i^{(0)})\big\rangle\circ dB_t^i \\
		&\quad+\sum_{i=1}^n\!\big\langle\nabla_y f,\,v_t(K_t e_i^{(0)})\big\rangle\circ dW_t^i
		+\Big[\tfrac12(\Delta_x+\Delta_y)f+\operatorname{trace}\!\big(J_t^\top\nabla^2_{xy}f\big)\Big](X_t,Y_t)\,dt.
	\end{aligned}
	\]
	By Definition~\ref{def:generator} and \eqref{eq:LJ-general-recall}, the drift term is precisely $(\mathcal L_t f)(X_t,Y_t)\,dt$, completing the proof.
\end{proof}

\begin{proposition}[Forward equation]\label{prop:product-markov}
	If $f\in C_b^2(M\times M)$, then $\frac{d}{dt}\mathbb E[f(X_t,Y_t)]=\mathbb E[(\mathcal L_t f)(X_t,Y_t)]$. If $(J_t,K_t)$ are deterministic, the law $\mu_t$ of $(X_t,Y_t)$ satisfies $\partial_t\mu_t=\mathcal L_t^*\mu_t$ (cf.\ \cite{Hsu2002}).
\end{proposition}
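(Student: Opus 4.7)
The plan is to take expectations in the two-point Itô formula of Lemma~\ref{lem:ito-general} and read off the forward equation, then dualize for the adjoint statement. First, I would convert the Stratonovich formula to Itô form, integrated from $0$ to $t$:
\[
f(X_t,Y_t)-f(X_0,Y_0)=M_t+\int_0^t(\mathcal L_s f)(X_s,Y_s)\,ds,
\]
where $M_t$ collects the three stochastic integrals against $dB^i$ and $dW^i$. The Stratonovich-to-Itô correction produces exactly the second-order trace terms already folded into $\mathcal L_s f$ in the proof of Lemma~\ref{lem:ito-general}, so no new drift appears.

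Second, I would argue that $M_t$ is a genuine (not merely local) martingale. Since $f\in C_b^2(M\times M)$, both $|\nabla_x f|$ and $|\nabla_y f|$ are bounded by $\|f\|_{C^1}$; the admissibility constraint $J_tJ_t^\top+K_tK_t^\top=I_n$ forces $\|J_t\|_{\mathrm{op}},\|K_t\|_{\mathrm{op}}\le 1$; and $u_t,v_t$ are isometries. Hence the integrands are uniformly bounded, the quadratic variation $\langle M,M\rangle_t$ grows at most linearly in $t$, and $\mathbb E[M_t]=0$. Taking expectations and applying Fubini (justified by the bound $\|\mathcal L_s f\|_\infty\le \tfrac12(\|\Delta_x f\|_\infty+\|\Delta_y f\|_\infty)+\|\nabla^2_{xy}f\|_\infty$) yields
\[
\mathbb E[f(X_t,Y_t)]-\mathbb E[f(X_0,Y_0)]=\int_0^t\mathbb E[(\mathcal L_s f)(X_s,Y_s)]\,ds.
\]
Differentiating this absolutely continuous identity gives the first claim; when $(J_t,K_t)$ depend continuously on $t$, the derivative holds pointwise. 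For the dual statement with deterministic $(J_t,K_t)$, rewrite the above as $\langle\mu_t,f\rangle-\langle\mu_0,f\rangle=\int_0^t\langle\mu_s,\mathcal L_s f\rangle\,ds$ for all $f\in C_b^2(M\times M)$, and differentiate to obtain $\partial_t\langle\mu_t,f\rangle=\langle\mathcal L_t^*\mu_t,f\rangle$ in the distributional sense, which is precisely $\partial_t\mu_t=\mathcal L_t^*\mu_t$.

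The main technical obstacle is that the mixed Hessian $\nabla^2_{xy}f$, and hence $\mathcal L_tf$, is constructed on $\mathcal U$ only, outside the cut locus of either coordinate. To legitimize the time integral, I would invoke the classical fact that the cut locus has measure zero in $M$ and that the two-point diffusion $(X_s,Y_s)$ spends Lebesgue-a.e.\ time in $\mathcal U$ almost surely (standard for non-degenerate elliptic diffusions on complete manifolds; cf.\ Kendall's treatment of the radial part via local times). The value of $\mathcal L_sf$ on the exceptional set may be defined arbitrarily without altering the identity. With this caveat, the forward equation extends continuously across the cut locus, and the argument above goes through without modification.
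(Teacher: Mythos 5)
The paper does not supply a proof of this proposition (it simply cites Hsu), so I evaluate your proposal on its own terms. The core structure is correct and is what any standard proof would look like: integrate the two-point It\^o formula, show the local martingale is a true martingale via uniform boundedness (using $f\in C_b^2$, the admissibility bound $\|J_t\|_{\mathrm{op}},\|K_t\|_{\mathrm{op}}\le1$, and the frame isometries), apply Fubini, differentiate, and dualize for the adjoint statement. All of that is fine, modulo replacing a pointwise derivative with an a.e.~derivative if $J_t$ is merely predictable rather than continuous.

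The final paragraph, however, contains a genuine gap. You claim the two-point process $(X_s,Y_s)$ is a ``non-degenerate elliptic diffusion'' and therefore spends Lebesgue-a.e.\ time in $\mathcal U$. Neither part of this is true in general. The second-order symbol of $\mathcal L_t$ is, in coupled frames, $\begin{pmatrix} I_n & J_t \\ J_t^\top & I_n\end{pmatrix}$, whose eigenvalues are $1\pm\sigma_i(J_t)$; for $J_t=\pm I$ half of them vanish, so the product diffusion is degenerate. Concretely, the synchronous coupling on $\mathbb R^n$ started on the diagonal remains on the diagonal for all time, and the diagonal lies outside $\mathcal U$. So the occupation-time argument cannot be used as a blanket fix.

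The good news is that the cut-locus obstruction you are worried about is actually spurious for $C^2$ test functions, and the correct resolution makes your last paragraph unnecessary. The smooth-approximation/localization machinery (Proposition~\ref{prop:smooth-approx}, and the stopping time $\tau$ in Lemma~\ref{lem:ito-general}) is needed because the distance $\rho$ is not differentiable across the cut locus. A genuine $f\in C_b^2(M\times M)$ is smooth everywhere, so the Stratonovich chain rule on the product manifold applies without localization, and the mixed second-order term
\[
\sum_{i=1}^n \nabla^2_{xy}f(X_t,Y_t)\bigl(u_t e_i^{(0)},\,v_t(J_t e_i^{(0)})\bigr)\,dt
\]
is intrinsically well defined as a pairing of the bilinear form $\nabla^2_{xy}f:T_xM\times T_yM\to\mathbb R$ with the coupling frames; no parallel-transport identification between $T_yM$ and $T_xM$ is required, hence no dependence on $\mathcal U$ or the unique minimizing geodesic. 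The parallel-transport convention in Definition~\ref{def:generator} is a geometric bookkeeping device, natural when the target is $\rho$, but it does not introduce any new singularity for smooth $f$. If you replace the occupation-time argument with this observation, the rest of your proof stands.
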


\subsection{Generator on the distance and cut locus handling}\label{subsec:generator-distance}

\begin{proposition}[Generator acting on $\rho$]\label{prop:generator-distance}
	For $(x,y)\in\mathcal U$ with $r=d(x,y)$, one has
	\[
	(\mathcal L_t \rho)(x,y)
	=\tfrac12\big(\Delta_x\rho+\Delta_y\rho\big)(x,y)\;-\;\operatorname{trace}\!\big(J_t^\top S_r\big).
	\]
\end{proposition}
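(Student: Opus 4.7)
The plan is to specialize the general two-point generator formula~\eqref{eq:LJ-general} to $f=\rho$ and then convert the mixed-Hessian term into a shape-operator expression via Proposition~\ref{prop:mixedHess-distance}. The two ordinary Laplacian terms $\tfrac12\Delta_x\rho$ and $\tfrac12\Delta_y\rho$ already appear in the desired form, so the entire content of the statement reduces to the identity
\[
\operatorname{trace}\!\big(J_t^\top\nabla^2_{xy}\rho(x,y)\big)=-\operatorname{trace}\!\big(J_t^\top S_r\big).
\]

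To establish this, I would first polarize the diagonal identity~\eqref{eq:mixed-Hess-distance} to obtain its bilinear extension
\[
\nabla^2_{xy}\rho(x,y)(X,\mathcal P_{x\to y}Z)=-\langle S_r X_T,Z_T\rangle,\qquad X,Z\in T_xM,
\]
where the subscript $T$ denotes the orthogonal projection onto $T_x\mathbb{S}_r(x)=(\nabla_x\rho)^\perp$. Since $S_r$ annihilates the radial direction, both projections $X\mapsto X_T$, $Z\mapsto Z_T$ can be absorbed by extending $S_r$ by zero on the span of $\nabla_x\rho$; this extended operator is what the trace convention on the right-hand side refers to. Next, I would evaluate the mixed trace in the orthonormal frame $\{a_i\}=\{u_te_i^{(0)}\}$ of $T_xM$ used in the proof of Lemma~\ref{lem:ito-general}. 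Writing the induced isometry as $\hat J_t=v_tJ_tu_t^{-1}:T_xM\to T_yM$ and identifying $T_yM\simeq T_xM$ via $\mathcal P_{y\to x}$, the polarized identity gives
\[
\operatorname{trace}\!\big(J_t^\top\nabla^2_{xy}\rho\big)=\sum_i\nabla^2_{xy}\rho(a_i,\hat J_ta_i)=-\sum_i\langle S_r a_i,\,\mathcal P_{y\to x}(\hat J_ta_i)\rangle,
\]
and the right-hand sum is, by the very frame-level convention of Definition~\ref{def:generator}, the trace $\operatorname{trace}(J_t^\top S_r)$. Substituting into \eqref{eq:LJ-general} yields the claimed formula.

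The main obstacle is the parallel-transport bookkeeping: the mixed Hessian is naturally a bilinear form on $T_xM\times T_yM$, whereas both $S_r$ and $J_t^\top$ must be made into endomorphisms of the \emph{same} tangent space before either trace is well-defined. Once the conventions—$J_t$ acting on $T_xM$ through $\mathcal P_{y\to x}\circ v_tJ_tu_t^{-1}$, and $S_r$ extended by zero along $\nabla_x\rho$—are fixed in line with Definition~\ref{def:generator} and Remark~\ref{rmk:Jgeom}, Gauss' lemma (already invoked in Proposition~\ref{prop:mixedHess-distance}) makes the cancellation of tangential projections automatic, and the identity reduces to a single line of algebra.
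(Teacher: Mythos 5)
Your proposal is correct and follows essentially the same route as the paper's proof: apply the two–point generator formula with $f=\rho$, then convert the mixed Hessian into the shape operator via Proposition~\ref{prop:mixedHess-distance} under the parallel–transport identification. The only difference is one of exposition — you make the polarization of the diagonal identity and the frame-level trace computation explicit, whereas the paper simply asserts $\nabla^2_{xy}\rho=-S_r$ and substitutes; the underlying content is identical.
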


\begin{proof}
	Apply Lemma~\ref{lem:ito-general} with $f=\rho$. On $\mathcal U$, the mixed Hessian of the distance satisfies
	\(\nabla^2_{xy}\rho(x,y)(X,\tilde X)=-\langle S_r X_T,X_T\rangle\),
	so under the parallel-transport identification $T_yM\simeq T_xM$ one has
	\(\nabla^2_{xy}\rho=-S_r\).
	Substituting this into Definition~\ref{def:generator},
	\[
	(\mathcal L_t f)
	=\tfrac12(\Delta_x+\Delta_y)f+\operatorname{trace}\!\big(J_t^\top\nabla^2_{xy}f\big),
	\]
	gives
	\[
	(\mathcal L_t \rho)
	=\tfrac12(\Delta_x+\Delta_y)\rho+\operatorname{trace}\!\big(J_t^\top(-S_r)\big)
	=\tfrac12(\Delta_x\rho+\Delta_y\rho)-\operatorname{trace}\!\big(J_t^\top S_r\big).
	\]
	See \cite{Emery1989}, \cite{Hsu2002}.
\end{proof}

\begin{proposition}[Smooth approximations and localization]\label{prop:smooth-approx}
	There exists a decreasing sequence $\rho_\varepsilon\in C^\infty(M\times M)$ such that $\rho_\varepsilon\downarrow\rho$ pointwise, $\|\nabla\rho_\varepsilon\|\le1$, and $\rho_\varepsilon\to\rho$ in $C^2_{\mathrm{loc}}(\mathcal U)$ (cf.\ \cite[Ch.~4]{Sakai1996}, \cite[Ch.~2]{Petersen2016}). 
	Let $\tau$ be a stopping time such that $(X_t,Y_t)\in\mathcal U$ for $t<\tau$. Then
	\[
	\rho(X_{t\wedge\tau},Y_{t\wedge\tau})=\rho(X_0,Y_0)
	+\int_0^{t\wedge\tau}\text{(mart.)}\,ds
	+\int_0^{t\wedge\tau}(\mathcal L_s\rho)(X_s,Y_s)\,ds,
	\]
	where the identity follows by applying Lemma~\ref{lem:ito-general} to $\rho_\varepsilon$ and letting $\varepsilon\downarrow0$ with dominated convergence (cf.\ \cite{Emery1989}, \cite{Hsu2002}).
\end{proposition}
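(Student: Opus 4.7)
The plan is to construct the smooth approximants $\rho_\varepsilon$ by standard regularization and then pass to the limit in the Itô identity provided by Lemma~\ref{lem:ito-general}.

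\textbf{Step 1 (smoothing).} I would build $\rho_\varepsilon$ in two stages. First form the sup-convolution
\[
\widetilde{\rho}_\varepsilon(x,y):=\sup_{(x',y')\in M\times M}\Big\{\rho(x',y')-\tfrac{1}{2\varepsilon}\big(d(x,x')^2+d(y,y')^2\big)\Big\},
\]
which is $1$-Lipschitz, semiconcave, and satisfies $\widetilde{\rho}_\varepsilon\downarrow\rho$ monotonically as $\varepsilon\downarrow 0$. Then convolve $\widetilde{\rho}_\varepsilon$ against a Riemannian mollifier of scale $\varepsilon$ (built from a partition of unity and chart-level mollifications with probability kernels) to land in $C^\infty(M\times M)$. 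Since convolution against a probability kernel cannot increase Lipschitz constants and preserves monotone limits, the resulting $\rho_\varepsilon$ retains both $\|\nabla\rho_\varepsilon\|\le 1$ and $\rho_\varepsilon\downarrow\rho$. Because $\rho$ is already $C^\infty$ on $\mathcal U$, standard mollifier estimates give $\rho_\varepsilon\to\rho$ in $C^2_{\mathrm{loc}}(\mathcal U)$.

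\textbf{Step 2 (Itô on approximants and localization).} Apply Lemma~\ref{lem:ito-general} with $f=\rho_\varepsilon$ to obtain
\[
\rho_\varepsilon(X_t,Y_t)=\rho_\varepsilon(X_0,Y_0)+M^\varepsilon_t+\int_0^t(\mathcal L_s\rho_\varepsilon)(X_s,Y_s)\,ds,
\]
where $M^\varepsilon_t$ is the sum of the Itô integrals against $B$ and $W$. Introduce an exhausting sequence of stopping times $\tau_k\uparrow\tau$ such that $\{(X_s,Y_s):0\le s\le t\wedge\tau_k\}$ is contained in a compact set $\Omega_k\Subset\mathcal U$; such $\tau_k$ exist by continuity of the paths and openness of $\mathcal U$. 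Evaluate the identity above at time $t\wedge\tau_k$.

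\textbf{Step 3 (passage to the limit).} On $\Omega_k$, $C^2$-convergence gives $\nabla\rho_\varepsilon\to\nabla\rho$ and $(\mathcal L_s\rho_\varepsilon)(X_s,Y_s)\to(\mathcal L_s\rho)(X_s,Y_s)$ uniformly for $s\le t\wedge\tau_k$, so bounded convergence handles the drift integral. For the martingale term, the uniform bound $\|\nabla\rho_\varepsilon\|\le 1$ makes $\langle M^\varepsilon\rangle_{t\wedge\tau_k}$ uniformly bounded in $\varepsilon$, and uniform convergence of the integrands on $\Omega_k$ together with the Itô isometry yields $L^2$-convergence $M^\varepsilon_{t\wedge\tau_k}\to M_{t\wedge\tau_k}$, where $M$ is the martingale built from the limit integrands $\nabla_x\rho,\nabla_y\rho$. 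Letting $k\to\infty$ and using that the paths remain in $\mathcal U$ on $[0,\tau)$ extends the identity to $t\wedge\tau$.

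\textbf{Main obstacle.} The sharp point is arranging all three properties—smoothness on $M\times M$, monotone pinching $\rho_\varepsilon\downarrow\rho$, and the gradient bound $\|\nabla\rho_\varepsilon\|\le 1$—simultaneously, without imposing curvature hypotheses beyond completeness. The sup-convolution/mollification route above is the cleanest; the delicate verification is that the second mollification does not inflate the Lipschitz constant in charts where the coordinate-induced metric differs from $g$. A secondary subtlety is the presence of the cut locus outside $\mathcal U$: the stopping at $\tau$ bypasses any Kendall–Cranston local-time contribution, since the paths remain strictly inside the smooth region of $\rho$ on $[0,\tau)$, which is precisely why no singular term appears in the limit formula.
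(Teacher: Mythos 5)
Your Steps 2--3 (apply Lemma~\ref{lem:ito-general} to $\rho_\varepsilon$, localize on stopping times $\tau_k\uparrow\tau$ with paths in compacts $\Omega_k\Subset\mathcal U$, use $C^2_{\mathrm{loc}}$ convergence plus Itô isometry to pass to the limit, then let $k\to\infty$) match the paper's argument exactly; the paper simply cites the existence of $\rho_\varepsilon$ rather than constructing it.

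On Step~1, your sup-convolution-then-mollify route is in the right spirit, but there is a genuine gap in the chain you are asserting. Sup-convolution does give $\widetilde\rho_\varepsilon\ge\rho$, $\widetilde\rho_\varepsilon$ decreasing in $\varepsilon\downarrow0$, and $1$-Lipschitz. However, the subsequent Riemannian mollification is \emph{not} guaranteed to preserve $\rho_\varepsilon\ge\rho$: a mollifier averages, and since $\widetilde\rho_\varepsilon$ is semiconcave, a Jensen-type estimate shows the mollified function can drop \emph{below} $\widetilde\rho_\varepsilon$ by an amount of order the mollification scale, and hence below $\rho$ at points where $\widetilde\rho_\varepsilon$ hugs $\rho$ closely. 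Likewise, you do not verify that the cross-scale ordering $\rho_{\varepsilon_1}\ge\rho_{\varepsilon_2}$ for $\varepsilon_1>\varepsilon_2$ survives the mollification at two different scales. Both issues are repairable — e.g.\ add a small additive correction $c_\varepsilon\downarrow0$ calibrated to the Lipschitz bound and the mollification scale, which does not affect $\nabla\rho_\varepsilon$ or the $C^2_{\mathrm{loc}}$ convergence on $\mathcal U$; or replace the two-step procedure by a direct Greene--Wu Riemannian convolution, which is exactly what the cited Sakai and Petersen references supply — but as written the monotone-pinching claim $\rho_\varepsilon\downarrow\rho$ is not established. You do correctly flag the Lipschitz-constant issue under chart mollification; note additionally that the Greene--Wu construction resolves precisely that by averaging via the exponential map rather than in coordinates. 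Your closing remark — that stopping strictly inside $\mathcal U$ is what excludes any Kendall--Cranston singular drift/local-time term — is accurate and is the operative reason the drift in the limit formula is the classical $(\mathcal L_s\rho)(X_s,Y_s)$.
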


\subsection{Canonical examples and extremality}\label{subsec:examples-couplings}

\begin{example}[Synchronous coupling]
	Take $J_t=I_n$, $K_t=0$. Then
	\[
	(\mathcal L_t\rho)(x,y)
	=\tfrac12(\Delta_x\rho+\Delta_y\rho)-\operatorname{trace}(S_r).
	\]
	In rotationally symmetric models, $\Delta_x\rho=\Delta_y\rho=\operatorname{trace}(S_r)$, hence $(\mathcal L_t\rho)\equiv0$ on $\mathcal U$ (cf.\ \cite{Pascu2011,PP2018}).
\end{example}

\begin{example}[Reflection (radial hyperplane) coupling]
	Let $\nu\in T_xM$ be the unit tangent at $x$ of the minimizing geodesic toward $y$.  
	Using parallel–transported frames, define $J_t$ to act as $-I$ on $\nu^\perp$ and $0$ on $\mathrm{span}\{\nu\}$.  
	Then $\operatorname{trace}(J_t^\top S_r)=-\operatorname{trace}(S_r)$, so
	\[
	(\mathcal L_t\rho)(x,y)
	=\tfrac12(\Delta_x\rho+\Delta_y\rho)+\operatorname{trace}(S_r),
	\]
	which realizes the classical reflection–coupling extremum (compare \cite{LindvallRogers1986,HsuSturm2013} and the manifold adaptations in \cite{Cranston1991,Kendall1986}).
\end{example}

\begin{example}[Rotational twist couplings]
	Let $R_t\in O(n)$ be a predictable process and set $J_t=R_t$, $K_t$ any process satisfying the admissibility constraint \eqref{eq:covconstraint}.  
	This yields a continuous interpolation from the synchronous ($R_t=I$) to reflection–type choices by rotating tangential directions along geodesic spheres (cf.\ \cite{BurdzyKendall2000} for extremal and efficient coadapted constructions).
\end{example}

\begin{proposition}[Extremal trace bound]\label{prop:trace-extreme}
	Fix $(x,y)\in\mathcal U$, and let the principal curvatures of the geodesic sphere $\mathbb S_r(x)$ be $\kappa_1,\dots,\kappa_{n-1}\ge0$ on $T_x\mathbb S_r(x)$.  
	For any linear map $J:T_xM\to T_xM$ with $\|J\|_{\mathrm{op}}\le1$ and $J\partial_r=0$,
	\[
	-\,\sum_{i=1}^{n-1}\kappa_i\ \le\ \operatorname{trace}(J^\top S_r)\ \le\ \sum_{i=1}^{n-1}\kappa_i,
	\]
	with the right (resp.\ left) extremum attained by the reflection (resp.\ synchronous) choice on the tangential hyperplane.  
	See also \cite{Cranston1991,Kendall1986} for related sharpness phenomena in coupling–based gradient and Harnack estimates.
\end{proposition}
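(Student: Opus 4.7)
The plan is to recognize the inequality as a sharp Cauchy--Schwarz estimate after the hypothesis $J\partial_r=0$ has been used to isolate the tangential hyperplane $T_x\mathbb{S}_r(x)$. The only geometric input is the spectral decomposition of the self-adjoint shape operator $S_r$; everything else is a one-line trace computation.

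First I would diagonalize $S_r$ in an adapted orthonormal basis $(e_1,\dots,e_{n-1})$ of $T_x\mathbb{S}_r(x)$, so that $S_r e_i=\kappa_i e_i$ with $\kappa_i\ge 0$, and append $e_n=\partial_r$. Extending $S_r$ to $T_xM$ by setting $S_r\partial_r=0$ keeps it self-adjoint and diagonal in this basis, and a direct trace expansion gives
\[
\operatorname{trace}(J^\top S_r)=\sum_{k=1}^{n}\langle S_r e_k,\,J e_k\rangle=\sum_{i=1}^{n-1}\kappa_i\,\langle e_i,Je_i\rangle,
\]
the radial term dropping out because $S_r\partial_r=0$ (and, redundantly, because $J\partial_r=0$). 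For each $i\le n-1$, Cauchy--Schwarz together with the operator-norm bound yields $|\langle e_i,Je_i\rangle|\le\|Je_i\|\le\|J\|_{\mathrm{op}}\le 1$, and since each $\kappa_i\ge 0$ a termwise comparison produces the stated two-sided inequality.

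For sharpness I would exhibit explicit extremizers within the admissible class: the endomorphisms $J_\pm$ defined by $J_\pm e_i=\pm e_i$ on $T_x\mathbb{S}_r(x)$ and $J_\pm\partial_r=0$ satisfy $\|J_\pm\|_{\mathrm{op}}=1$ and realize $\operatorname{trace}(J_\pm^\top S_r)=\pm\sum_{i=1}^{n-1}\kappa_i$. These are precisely the tangential projections of the synchronous and reflection choices described in Section~\ref{subsec:examples-couplings}, confirming the extremal assignment of the proposition.

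I do not anticipate any substantial obstacle: once the identification $\nabla^2_{xy}\rho=-S_r$ from Proposition~\ref{prop:mixedHess-distance} has reduced matters to a bound on $\operatorname{trace}(J^\top S_r)$, the remaining content is the linear-algebraic argument above. The only point deserving a brief remark is the equality case: saturation of Cauchy--Schwarz on any principal direction with $\kappa_i>0$ forces $Je_i=\pm e_i$, while on directions where $\kappa_i=0$ the value of $Je_i$ is immaterial to the trace, so the extremizers are unique only up to those degenerate directions.
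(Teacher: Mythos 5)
Your proof is correct, and it reproduces the argument the paper itself uses for the equivalent spectral bounds in Lemma~\ref{lem:spectral:drift} and Lemma~\ref{lem:spectral-bound}: diagonalize $S_r$ in a principal orthonormal frame, observe that the radial direction drops out, reduce the trace to $\sum_i\kappa_i\langle e_i,Je_i\rangle$, and bound each inner product by $\|J\|_{\mathrm{op}}\le1$. (The paper states Proposition~\ref{prop:trace-extreme} without a proof of its own, but those two lemmas supply it verbatim.)

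One point deserves explicit attention rather than the glancing ``confirming the extremal assignment of the proposition'' with which you close. Your calculation shows that $J=+I_{\mathrm{tan}}$ --- the tangential part of the synchronous choice $J=I$ --- attains the right endpoint $+\sum_i\kappa_i$, while $J=-I_{\mathrm{tan}}$ --- the reflection choice of Section~\ref{subsec:examples-couplings} --- attains the left endpoint $-\sum_i\kappa_i$. That is the opposite of what the proposition's final clause asserts (``the right (resp.\ left) extremum attained by the reflection (resp.\ synchronous) choice''). The roles only flip after the sign change in the drift $M_t=A(r)-\operatorname{trace}(J^\top S_r)$, where reflection indeed maximizes the drift (Theorem~\ref{thm:sharpWindow:drift}); but as a statement about $\operatorname{trace}(J^\top S_r)$ itself, the attribution in the proposition appears to be a slip, and your computation is the evidence for it. You should flag this discrepancy rather than present your result as a confirmation of the as-written statement.
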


\begin{proposition}[Radial SDE for the distance]\label{prop:radialSDE}
	Let $(M^n,g,o)$ be a pointed complete Riemannian manifold such that, below the cut locus of $o$, the mean curvature of geodesic spheres satisfies
	\[
	A(r):=\operatorname{trace}(S_r)=A(r)\ \text{(a function of $r$ only)}.
	\]
	Let $X_t$ be Brownian motion on $M$ and set $r_t:=d(o,X_t)$.  
	Then there exists a one–dimensional standard Brownian motion $\beta_t$ such that, in the It\^o sense,
	\[
	dr_t \;=\; d\beta_t \;+\; \tfrac12\,A(r_t)\,dt .
	\]
	This identity holds up to the first hitting time of the cut locus of $o$, and by localization it holds for all $t\ge0$. {\rm (Cf.\ \cite{Hsu2002}.)}
\end{proposition}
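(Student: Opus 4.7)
The plan is to apply the one–point Itô formula to $r$ along $X_t$ and identify the resulting martingale part with a one–dimensional Brownian motion via Lévy's theorem. I will work first on the open region $M\setminus(\{o\}\cup\mathrm{Cut}(o))$ where $r$ is smooth with $|\nabla r|=1$ and, by the radial isoparametric hypothesis, $\Delta r=A(r)$. The cut locus will be dealt with at the end by the localization prescription already encoded in Proposition~\ref{prop:smooth-approx}.

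First, let $(u_t)$ be the horizontal lift of $X_t$ in $O(M)$ driven by a standard $\R^n$–Brownian motion $B_t$, as in Definition~\ref{def:stochdev}. Let $\sigma$ be the first hitting time of $\{o\}\cup\mathrm{Cut}(o)$. For $t<\sigma$ the function $r$ is $C^\infty$ at $X_t$, so the one–point version of Lemma~\ref{lem:ito-general} applied to $f(x,y)=r(x)$ (ignoring the second variable) yields, in Itô form,
\[
dr(X_t)=\sum_{i=1}^{n}\bigl\langle\nabla r(X_t),\,u_t e_i^{(0)}\bigr\rangle\,dB_t^{i}+\tfrac12(\Delta r)(X_t)\,dt
=\sum_{i=1}^{n}\bigl\langle\nabla r(X_t),\,u_t e_i^{(0)}\bigr\rangle\,dB_t^{i}+\tfrac12 A(r_t)\,dt,
\]
using $\Delta r=A(r)$ on the smooth region.

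Next, define the continuous local martingale
\[
\beta_t:=\int_0^{t\wedge\sigma}\sum_{i=1}^{n}\bigl\langle\nabla r(X_s),\,u_s e_i^{(0)}\bigr\rangle\,dB_s^{i}.
\]
Its quadratic variation is
\[
[\beta]_t=\int_0^{t\wedge\sigma}\sum_{i=1}^{n}\bigl\langle\nabla r(X_s),\,u_s e_i^{(0)}\bigr\rangle^{2}\,ds
=\int_0^{t\wedge\sigma}|\nabla r(X_s)|^{2}\,ds=t\wedge\sigma,
\]
since $(u_s e_i^{(0)})_{i=1}^n$ is orthonormal in $T_{X_s}M$ and $|\nabla r|\equiv 1$. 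On the event $\{\sigma=\infty\}$, Lévy's characterization identifies $\beta$ as a standard one–dimensional Brownian motion, and the SDE $dr_t=d\beta_t+\tfrac12 A(r_t)\,dt$ holds up to $\sigma$.

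The only nontrivial point is the extension past the cut locus. I will invoke Proposition~\ref{prop:smooth-approx} to apply the Itô formula to a smooth approximant $r_\varepsilon\downarrow r$ with $|\nabla r_\varepsilon|\le 1$ and $r_\varepsilon\to r$ in $C^2_{\mathrm{loc}}$ off $\mathrm{Cut}(o)$; passing $\varepsilon\downarrow 0$ and using that Brownian motion spends zero Lebesgue time on $\mathrm{Cut}(o)$ (a closed set of measure zero) kills any singular contribution and yields the same drift and diffusion coefficients globally. Extending $\beta$ to all of $[0,\infty)$ by standard enlargement with an independent one–dimensional Brownian motion on $\{t\ge\sigma\}$ (or simply noting $\sigma=\infty$ a.s.\ under completeness and the radial structure) gives the stated global identity. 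The main obstacle is exactly this cut–locus analysis: ensuring that the Itô decomposition survives the loss of smoothness without picking up a local–time term on $\mathrm{Cut}(o)$, which is handled by the monotone smooth approximation of $r$ already recorded in Proposition~\ref{prop:smooth-approx}.
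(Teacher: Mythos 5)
Your proof takes essentially the same route as the paper's: apply It\^o's formula to $r$ via the horizontal lift on the smooth region $M\setminus(\{o\}\cup\mathrm{Cut}(o))$, use the hypothesis $\Delta r = A(r)$ to identify the drift, invoke L\'evy's characterization on the martingale part (whose quadratic variation is $t$ since $|\nabla r|\equiv 1$ and $u_s$ is an isometry), and then localize past the cut locus. The only stylistic difference is that you apply It\^o directly to $r$ on the smooth region, where it is already $C^\infty$, whereas the paper carries the smooth approximation $\rho_\varepsilon$ throughout; both are legitimate.

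The one genuine flaw is in your cut--locus step. You assert that, because $\mathrm{Cut}(o)$ has Lebesgue measure zero, passing $\varepsilon\downarrow 0$ in the decreasing approximation ``kills any singular contribution'' so that no local--time term appears. That inference is backwards. By Kendall's theorem (see Hsu~\cite{Hsu2002}), the radial process of Brownian motion in general satisfies
\[
dr_t \;=\; d\beta_t \;+\; \tfrac12\,\Delta r(X_t)\,dt \;-\; dL_t,
\]
where $L_t$ is a nondecreasing continuous process that increases only on $\{X_t\in\mathrm{Cut}(o)\}$. Local time is exactly the kind of object that survives while being supported on a time set of Lebesgue measure zero, and the decreasing monotone approximation $\rho_\varepsilon\downarrow r$ is the standard device for \emph{exhibiting} $L_t$ (the inward kink of $r$ at the cut locus makes $\Delta\rho_\varepsilon$ develop a concentrating negative part whose limit is $-dL_t$), not for eliminating it. The correct reason the local time vanishes in the paper's examples is that $\mathrm{Cut}(o)$ is \emph{polar} for Brownian motion---empty (a pole), or of codimension at least two as in the rank--one symmetric spaces and space forms with $n\ge 2$---so that the hitting time $\sigma$ is a.s.\ infinite. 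Mere measure zero does not yield this, and your parenthetical ``$\sigma=\infty$ a.s.\ under completeness and the radial structure'' misattributes it: completeness alone does not give $\sigma=\infty$. You should either restrict the conclusion to $t<\sigma$ or state a polarity/pole hypothesis. The paper's own ``standard patching arguments'' are equally unsubstantiated on this point, but your proposal states an explicit false mechanism that the paper does not.
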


\begin{proof}
	Write $\rho(x):=d(o,x)$. On the open set $M\setminus(\{o\}\cup\mathrm{Cut}(o))$, $\rho$ is $C^\infty$, $|\nabla\rho|=1$, and $\Delta\rho=A(\rho)$ by assumption (radial isoparametricity of the mean curvature).  
	Let $(u_t)_{t\ge0}$ denote the horizontal lift of $X_t$, so that $dX_t=u_t\circ dB_t$.
	
	Fix a relatively compact open set $U\Subset M\setminus(\{o\}\cup\mathrm{Cut}(o))$ and let $\tau_U:=\inf\{t\ge0: X_t\notin U\}$ be its exit time.  
	Choose a standard smooth decreasing approximation $\rho_\varepsilon\in C^\infty(M)$ with $\rho_\varepsilon\downarrow\rho$, $\|\nabla\rho_\varepsilon\|\le1$, and $\rho_\varepsilon\to\rho$ in $C^2_{\mathrm{loc}}(U)$ (cf.\ the smooth–approximation lemma used earlier).  
	By It\^o’s formula on $t\wedge\tau_U$,
	\[
	d\rho_\varepsilon(X_{t\wedge\tau_U})
	=\big\langle \nabla\rho_\varepsilon(X_{t\wedge\tau_U}),\,u_{t\wedge\tau_U}\,dB_t\big\rangle
	+\tfrac12\,\Delta\rho_\varepsilon(X_{t\wedge\tau_U})\,dt.
	\]
	Letting $\varepsilon\downarrow0$ and using $C^2$–convergence on $U$ yields
	\[
	d\rho(X_{t\wedge\tau_U})
	=\big\langle \nabla\rho(X_{t\wedge\tau_U}),\,u_{t\wedge\tau_U}\,dB_t\big\rangle
	+\tfrac12\,A(\rho(X_{t\wedge\tau_U}))\,dt.
	\]
	Set
	\[
	d\beta_t := \big\langle \nabla\rho(X_{t}),\,u_{t}\,dB_t\big\rangle .
	\]
	Since $|\nabla\rho|=1$ and $u_t$ is an isometry, $\langle \beta\rangle_{t\wedge\tau_U}=t\wedge\tau_U$, so $\beta_{t\wedge\tau_U}$ is a standard Brownian motion.  
	Therefore,
	\[
	dr_{t\wedge\tau_U}
	= d\beta_{t\wedge\tau_U} + \tfrac12\,A(r_{t\wedge\tau_U})\,dt.
	\]
	Finally, let $U\uparrow M\setminus(\{o\}\cup\mathrm{Cut}(o))$ and use localization to obtain the stated SDE up to the cut–locus hitting time, and hence for all $t\ge0$ by standard patching arguments.
\end{proof}

\section{Distance SDE and sharp drift window}\label{sec:drift}

This section derives the stochastic differential equation (SDE) for the inter–particle distance
\(\rho_t:=d(X_t,Y_t)\) of a coadapted Brownian coupling \((X_t,Y_t)\) and identifies a sharp, pointwise
achievable window for its drift. All statements are first proved on stopping intervals that avoid the cut locus and then extended by localization.

\subsection{Two–point Itô formula for the distance}

\begin{theorem}[Two–point It\^o formula for the distance]\label{thm:twoPointIto:drift}
	Let $(M^n,g)$ be a connected, complete Riemannian manifold with Levi--Civita connection $\nabla$,
	and let $O(M)$ denote its orthonormal frame bundle with horizontal frame $(H_1,\dots,H_n)$.
	Consider a filtered probability space $(\Omega,\mathcal F,(\mathcal F_t)_{t\ge0},\mathbb P)$
	supporting two independent $\R^n$--Brownian motions $B_t$ and $W_t$.
	Let $(u_t,v_t)\in O(M)\times O(M)$ solve the coadapted Stratonovich SDE
	\[
	\begin{cases}
		\displaystyle du_t=\sum_{i=1}^n H_i(u_t)\circ dB_t^i,\\[0.35em]
		\displaystyle dv_t=\sum_{i=1}^n H_i(v_t)\circ\bigl((J_t\,dB_t)^i+(K_t\,dW_t)^i\bigr),
	\end{cases}
	\]
	where $(J_t,K_t)$ are predictable matrix processes satisfying the covariance constraint
	$J_tJ_t^\top+K_tK_t^\top=I_n$ a.s.\ for all $t\ge0$.
	Write $X_t:=\pi(u_t)$ and $Y_t:=\pi(v_t)$; then each is a Brownian motion on $(M,g)$ with generator $\tfrac12\Delta$.
	
	Let $\rho(x,y):=d(x,y)$ and let $\mathcal U\subset (M\times M)\setminus\{x=y\}$ be the open set
	where $x$ and $y$ are not on each other's cut locus and are joined by a unique minimizing geodesic;
	on $\mathcal U$ the function $\rho$ is $C^\infty$.
	Fix a stopping time $\tau$ such that $(X_t,Y_t)\in\mathcal U$ for all $t<\tau$.
	
	Then, on $[0,\tau)$, the distance process $\rho_t:=\rho(X_t,Y_t)$ admits the decomposition
	\begin{equation}\label{eq:rho-decomp:drift}
		d\rho_t \;=\; M_t\,dt + dN_t,
	\end{equation}
	where the drift and local–martingale parts are given by
	\begin{equation}\label{eq:Mt:drift}
		M_t=\tfrac12\bigl(\Delta_x\rho+\Delta_y\rho\bigr)(X_t,Y_t)
		+\operatorname{trace}\!\bigl(J_t^{\top}\,\nabla^2_{xy}\rho(X_t,Y_t)\bigr),
	\end{equation}
	and
	\begin{equation}\label{eq:Nt:drift}
		\begin{aligned}
			dN_t
			&=\sum_{i=1}^n \big\langle\nabla_x\rho(X_t,Y_t),\,u_t e_i^{(0)}\big\rangle\circ dB_t^i
			+\sum_{i=1}^n \big\langle\nabla_y\rho(X_t,Y_t),\,v_t(J_t e_i^{(0)})\big\rangle\circ dB_t^i\\
			&\qquad +\sum_{i=1}^n \big\langle\nabla_y\rho(X_t,Y_t),\,v_t(K_t e_i^{(0)})\big\rangle\circ dW_t^i .
		\end{aligned}
	\end{equation}
\end{theorem}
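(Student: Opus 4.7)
The plan is to obtain \eqref{eq:rho-decomp:drift}--\eqref{eq:Nt:drift} as a direct specialization of the two--point It\^o formula of Lemma~\ref{lem:ito-general} to $f=\rho$, combined with the smooth--approximation and localization argument of Proposition~\ref{prop:smooth-approx}. The only non--routine point is that $\rho$ is merely Lipschitz globally on $M\times M$ and fails to be $C^2$ on the cut locus and on the diagonal; all of the analytic content is therefore in justifying the application of Lemma~\ref{lem:ito-general} on the random open set where $(X_t,Y_t)\in\mathcal U$.

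First, I would fix the setting: on $[0,\tau)$ the pair $(X_t,Y_t)$ remains in $\mathcal U$ by hypothesis, so $\rho$ is $C^\infty$ at $(X_t,Y_t)$ along the path, and the mixed Hessian $\nabla^2_{xy}\rho(X_t,Y_t)$ (identified on $T_{X_t}M$ via $\mathcal P_{Y_t\to X_t}$) is a well--defined bounded symmetric bilinear form whose explicit identification with $-S_r$ is recorded in Proposition~\ref{prop:mixedHess-distance} (though it is not actually needed for the present statement, which keeps $\nabla^2_{xy}\rho$ abstract). Next, invoking Proposition~\ref{prop:smooth-approx}, I choose a decreasing sequence $\rho_\varepsilon\in C^\infty(M\times M)$ with $\rho_\varepsilon\downarrow\rho$, $\|\nabla\rho_\varepsilon\|\le 1$, and $\rho_\varepsilon\to\rho$ in $C^2_{\mathrm{loc}}(\mathcal U)$. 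Applying Lemma~\ref{lem:ito-general} to each $\rho_\varepsilon$ yields the Stratonovich martingale terms of \eqref{eq:Nt:drift} with $\rho$ replaced by $\rho_\varepsilon$, and the It\^o drift
\[
\tfrac12(\Delta_x\rho_\varepsilon+\Delta_y\rho_\varepsilon)(X_t,Y_t)\,dt
+\operatorname{trace}\!\bigl(J_t^{\top}\nabla^2_{xy}\rho_\varepsilon(X_t,Y_t)\bigr)\,dt.
\]

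Then I would localize by a standard exhaustion: choose relatively compact $U_k\Subset\mathcal U$ with $U_k\uparrow\mathcal U$, set $\tau_k:=\tau\wedge\inf\{t\ge 0:(X_t,Y_t)\notin U_k\}$, and integrate the It\^o identity on $[0,t\wedge\tau_k]$. On $U_k$, the $C^2_{\mathrm{loc}}$ convergence gives uniform convergence of $\nabla\rho_\varepsilon$, $\Delta\rho_\varepsilon$, and $\nabla^2_{xy}\rho_\varepsilon$ to their limits; the predictability and $L^2_{\mathrm{loc}}$ bound on $(J_t,K_t)$ ensure that the stochastic integrands are dominated by bounded processes on $[0,\tau_k]$, so by the dominated convergence theorem for It\^o integrals (equivalently, BDG plus dominated convergence on the quadratic variation) the Stratonovich martingale terms and the drift integrals pass to the limit simultaneously. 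This produces the identity \eqref{eq:rho-decomp:drift}--\eqref{eq:Nt:drift} on $[0,t\wedge\tau_k]$. Letting $k\to\infty$ and using $\tau_k\uparrow\tau$ (since $U_k\uparrow\mathcal U$ and $(X_t,Y_t)\in\mathcal U$ on $[0,\tau)$) gives the result on $[0,\tau)$.

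The main obstacle is the interchange of limits $\varepsilon\downarrow 0$ and stochastic integration near the singular locus of $\rho$; this is handled exactly as in Proposition~\ref{prop:smooth-approx} by the localizing stopping times $\tau_k$, which confine the computation to compact subsets of $\mathcal U$ where the approximation is uniformly $C^2$. Once that interchange is justified, the drift of the limit is automatically $\mathcal L_t\rho$ as in Definition~\ref{def:generator}, which is \eqref{eq:Mt:drift}, and the three Stratonovich stochastic integrals are precisely \eqref{eq:Nt:drift}. No separate identification of $\nabla^2_{xy}\rho$ as $-S_r$ is required at this stage, since the theorem is formulated abstractly in terms of the mixed Hessian; that geometric reduction is reserved for the subsequent application to the drift--window inequality.
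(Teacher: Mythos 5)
Your proposal is correct and follows essentially the same route as the paper's own proof: apply Lemma~\ref{lem:ito-general} to smooth approximations $\rho_\varepsilon$ of $\rho$ (as in Proposition~\ref{prop:smooth-approx}), then pass to the limit using $C^2_{\mathrm{loc}}(\mathcal U)$ convergence, with localization along stopping times keeping $(X_t,Y_t)$ inside compact subsets of $\mathcal U$. You merely spell out the exhaustion $U_k\Subset\mathcal U$ and the dominated-convergence/BDG justification more explicitly than the paper, which states the limit passage in a single sentence.
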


\begin{proof}
	Choose a standard smooth approximation $\rho_\varepsilon\in C^\infty(M\times M)$ such that
	$\rho_\varepsilon\downarrow \rho$ pointwise, $\|\nabla\rho_\varepsilon\|\le 1$, and
	$\rho_\varepsilon\to\rho$ in $C^2_{\mathrm{loc}}(\mathcal U)$ as $\varepsilon\downarrow0$.
	By Lemma~\ref{lem:ito-general} (the two–point Stratonovich It\^o formula) applied to $f=\rho_\varepsilon$, we obtain on $[0,\tau)$
	\[
	\begin{aligned}
		d\rho_\varepsilon(X_t,Y_t)
		&=\sum_{i=1}^n \!\big\langle\nabla_x \rho_\varepsilon(X_t,Y_t),\,u_t e_i^{(0)}\big\rangle\circ dB_t^i
		+\sum_{i=1}^n \!\big\langle\nabla_y \rho_\varepsilon(X_t,Y_t),\,v_t(J_t e_i^{(0)})\big\rangle\circ dB_t^i \\
		&\quad +\sum_{i=1}^n \!\big\langle\nabla_y \rho_\varepsilon(X_t,Y_t),\,v_t(K_t e_i^{(0)})\big\rangle\circ dW_t^i \\
		&\quad +\Bigl[\tfrac12(\Delta_x+\Delta_y)\rho_\varepsilon
		+\operatorname{trace}\!\bigl(J_t^\top\nabla^2_{xy}\rho_\varepsilon\bigr)\Bigr](X_t,Y_t)\,dt .
	\end{aligned}
	\]
	The only mixed drift arises from the common $B$–noise and equals
	$\operatorname{trace}(J_t^\top\nabla^2_{xy}\rho_\varepsilon)\,dt$;
	all terms involving $K_t\circ dW_t$ produce no cross–variation with $dB_t$
	because $\langle B,W\rangle\equiv 0$.
	
	Since $(X_t,Y_t)\in\mathcal U$ for $t<\tau$ and $\rho_\varepsilon\to\rho$ in $C^2_{\mathrm{loc}}(\mathcal U)$,
	let $\varepsilon\downarrow0$ to obtain the decomposition
	\eqref{eq:rho-decomp:drift}–\eqref{eq:Nt:drift} with drift term \eqref{eq:Mt:drift}.
	The process $N_t$ is a continuous local martingale by construction.
\end{proof}

\begin{proposition}[Geometric drift form]\label{prop:drift-geo:drift}
	With the notation of Theorem~\ref{thm:twoPointIto:drift}, write $r_t:=\rho_t=d(X_t,Y_t)$.
	Let $S_{r_t}$ denote the shape operator of the geodesic sphere $\mathbb S_{r_t}(X_t)$ at $X_t$,
	transported to $T_{X_t}M\simeq T_{Y_t}M$ by parallel transport along the minimizing geodesic from $X_t$ to $Y_t$.
	Let $A_x(r_t):=\operatorname{trace}(S_{r_t})$ and, similarly, $A_y(r_t)$ be the mean curvatures computed at $X_t$ and $Y_t$, respectively.
	Then the drift in \eqref{eq:rho-decomp:drift} takes the geometric form
	\begin{equation}\label{eq:Mt-shape:drift}
		M_t \;=\; \tfrac12\bigl(A_x(r_t)+A_y(r_t)\bigr)\;-\;\operatorname{trace}\!\bigl(J_t^{\top}S_{r_t}\bigr).
	\end{equation}
\end{proposition}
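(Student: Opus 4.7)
The plan is to combine the general two–point It\^o drift \eqref{eq:Mt:drift} from Theorem~\ref{thm:twoPointIto:drift} with the mixed–Hessian identity for the distance established in Proposition~\ref{prop:mixedHess-distance}. No new stochastic estimates are required; the argument is a pointwise algebraic substitution on the cut-locus-free set $\mathcal U$, and the cut-locus localization is already absorbed by Theorem~\ref{thm:twoPointIto:drift} via the smooth approximation $\rho_\varepsilon$.

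First I would treat the two single-variable Laplacians. For $(x,y)\in\mathcal U$ and fixed $y$, the map $x\mapsto\rho(x,y)$ is the distance from $y$, so $|\nabla_x\rho|=1$ and $\Delta_x\rho(x,y)$ equals the mean curvature $A_y(r)$ of the geodesic sphere $\mathbb{S}_r(y)$ at $x$; this is just $\operatorname{div}$ of the unit outward normal, as recorded in Section~\ref{subsec:mixed-hess}. By the symmetric computation, $\Delta_y\rho(x,y)=A_x(r)$. Evaluating at $(X_t,Y_t)$ yields
\[
\tfrac12\bigl(\Delta_x\rho+\Delta_y\rho\bigr)(X_t,Y_t)=\tfrac12\bigl(A_x(r_t)+A_y(r_t)\bigr).
\]

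Next I would rewrite the mixed–Hessian trace. Proposition~\ref{prop:mixedHess-distance} shows that, once $T_{Y_t}M$ is identified with $T_{X_t}M$ by parallel transport along the unique minimizing geodesic, the mixed Hessian coincides with $-S_{r_t}$ on the tangential hyperplane $T_{X_t}\mathbb{S}_{r_t}(X_t)$, while Gauss's lemma forces the radial component to vanish. Extending $S_{r_t}$ by zero in the direction of $\nabla_x\rho$ (which is the convention built into the statement of Proposition~\ref{prop:drift-geo:drift}), the mixed Hessian is thus $-S_{r_t}$ as an endomorphism of $T_{X_t}M$. Since the trace is invariant under the orthonormal parallel-transport identification and under the choice of frame,
\[
\operatorname{trace}\!\bigl(J_t^{\top}\nabla^2_{xy}\rho(X_t,Y_t)\bigr)=-\operatorname{trace}\!\bigl(J_t^{\top}S_{r_t}\bigr).
\]
Plugging both identities into \eqref{eq:Mt:drift} delivers \eqref{eq:Mt-shape:drift}.

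The only subtleties are bookkeeping the sign conventions (the outward unit normal to $\mathbb{S}_r(x)$ is $-\nabla_x\rho$, as fixed in Proposition~\ref{prop:mixedHess-distance}) and keeping straight which sphere each mean curvature refers to: $A_x$ is associated with $\mathbb{S}_{r_t}(X_t)$ (the sphere centered at $X_t$, so it appears from $\Delta_y\rho$) while $A_y$ comes from $\Delta_x\rho$. Once these identifications are made, the result is immediate, so I do not foresee a genuine obstacle; the content sits entirely in the two substitutions recalled above.
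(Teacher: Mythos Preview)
Your proposal is correct and follows essentially the same route as the paper: both arguments substitute the identity $\nabla^2_{xy}\rho=-S_r$ from Proposition~\ref{prop:mixedHess-distance} and the equalities $\Delta_x\rho,\Delta_y\rho=\text{mean curvature}$ into the drift formula \eqref{eq:Mt:drift}. Your extra care in tracking which sphere each Laplacian refers to (noting $\Delta_x\rho=A_y$, $\Delta_y\rho=A_x$) is a harmless refinement, since the two terms enter symmetrically in the sum $\tfrac12(A_x+A_y)$.
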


\begin{proof}
	On $\mathcal U$, the mixed Hessian of the distance satisfies
	\[
	\nabla^2_{xy}\rho(x,y)(X,\widetilde X) \;=\; -\big\langle S_r X_T,\,X_T\big\rangle,
	\]
	where $r=d(x,y)$ and $\widetilde X$ is the parallel transport of $X$ from $x$ to $y$
	(\emph{cf.} Proposition~\ref{prop:mixedHess-distance}).
	In the parallel–transport identification $T_yM\simeq T_xM$, this is the bilinear form
	$-\langle S_r\cdot,\cdot\rangle$ on the tangential hyperplane to the geodesic sphere.
	Hence
	\[
	\operatorname{trace}\!\bigl(J_t^\top\nabla^2_{xy}\rho(X_t,Y_t)\bigr)
	= -\,\operatorname{trace}\!\bigl(J_t^\top S_{r_t}\bigr).
	\]
	Moreover, away from the cut locus,
	\[
	\Delta_x\rho(X_t,Y_t)=A_x(r_t),\qquad
	\Delta_y\rho(X_t,Y_t)=A_y(r_t).
	\]
	Substituting these identities into the drift expression from Theorem~\ref{thm:twoPointIto:drift},
	\[
	M_t=\tfrac12\bigl(\Delta_x\rho+\Delta_y\rho\bigr)(X_t,Y_t)
	+\operatorname{trace}\!\bigl(J_t^{\top}\nabla^2_{xy}\rho(X_t,Y_t)\bigr),
	\]
	yields \eqref{eq:Mt-shape:drift}.
\end{proof}

\subsection{Quadratic variation and finite variation}

Let $(M,g)$ be a smooth Riemannian manifold.  
The Riemannian metric $g$ induces canonical linear isomorphisms between the tangent and cotangent spaces at each point, called the \emph{musical isomorphisms}:
\[
\flat:T_xM\longrightarrow T_x^*M,\qquad
X\longmapsto X^\flat:=g(X,\cdot).
\]
The map $X\mapsto X^\flat$ is referred to as \emph{lowering an index}, while $\alpha\mapsto\alpha^\sharp$ is called \emph{raising an index}.  
For a unit vector $e\in T_xM$, its metric dual $e^\flat\in T_x^*M$ is the covector $Y\mapsto g(e,Y)$.

Let $o\in M$ be a fixed base point, and let $r(x)=d(o,x)$ denote the Riemannian distance.  
Define the unit radial vector field $e_r(x):=\nabla r(x)$, which is the outward unit normal on the geodesic sphere $\mathbb S_r(o)$.  
Using the musical isomorphism, define the rank--one orthogonal projection onto the radial direction by
\[
\Pi_r(x):=e_r(x)\otimes e_r(x)^\flat.
\]
This operator acts on tangent vectors as
\[
\Pi_r(x)V = \langle e_r(x),V\rangle\,e_r(x),
\]
and satisfies
\[
\Pi_r(x)^2=\Pi_r(x),\qquad 
\mathrm{Tr}\big(\Pi_r(x)\big)=1.
\]
Thus $\Pi_r(x)$ is the orthogonal projection onto the one-dimensional subspace $\mathrm{span}\{e_r(x)\}\subset T_xM$.

\begin{lemma}[Martingale part and its quadratic variation]\label{lem:qv:drift}
	Let $e_r^x,e_r^y$ denote the unit tangent vectors at $X_t,Y_t$ along the minimizing geodesic from $X_t$ to $Y_t$.  
	Then, on any interval where $(X_t,Y_t)\in\mathcal U$,
	\[
	dN_t
	=-\big\langle e_r^x,\,u_t\circ dB_t\big\rangle
	+\big\langle e_r^y,\,v_t J_t\circ dB_t\big\rangle
	+\big\langle e_r^y,\,v_t K_t\circ dW_t\big\rangle .
	\]
	Writing the orthogonal projections onto the corresponding radial directions as
	\[
	\Pi_r^x:=e_r^x\otimes e_r^{x\,\flat},\qquad
	\Pi_r^y:=e_r^y\otimes e_r^{y\,\flat},
	\]
	one has
	\[
	d\langle N\rangle_t
	=\big\|\Pi_r^x u_t-\Pi_r^y v_t J_t\big\|_{\mathrm{HS}}^{\,2}\,dt
	+\big\|\Pi_r^y v_t K_t\big\|_{\mathrm{HS}}^{\,2}\,dt.
	\]
\end{lemma}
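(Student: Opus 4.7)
The plan is two-pronged: first I rewrite the Stratonovich representation of $dN_t$ from \eqref{eq:Nt:drift} by substituting the explicit form of the radial gradients of $\rho$, then I extract the quadratic variation by standard Itô bookkeeping.

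For the first step, a classical first-variation computation (Gauss' lemma on $\mathcal U$) gives $\nabla_x\rho(x,y)=-e_r^x$ and $\nabla_y\rho(x,y)=+e_r^y$, with the orientation convention of the lemma. Inserting these into the three sums of \eqref{eq:Nt:drift} and collapsing each $\sum_i\langle\cdot,u_t e_i^{(0)}\rangle\circ dB_t^i$ into the inner product $\langle\cdot,u_t\circ dB_t\rangle$ (and similarly for the $v_tJ_t$ and $v_tK_t$ blocks) yields the claimed formula for $dN_t$. This step is essentially a cosmetic re-packaging and has no analytic content beyond the gradient identities.

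For the second step, I introduce the scalar coefficients $\alpha_t^i:=-\langle e_r^x,u_t e_i^{(0)}\rangle$, $\beta_t^i:=\langle e_r^y,v_tJ_t e_i^{(0)}\rangle$, and $\gamma_t^i:=\langle e_r^y,v_tK_t e_i^{(0)}\rangle$. Since the Stratonovich correction differs from its Itô counterpart by a finite-variation term, the quadratic variations agree, so I may compute brackets as if the integrals were Itô. Using independence of $B$ and $W$ (so that $d\langle B^i,W^j\rangle=0$) together with $d\langle B^i,B^j\rangle=\delta_{ij}\,dt$, I obtain
\[
d\langle N\rangle_t=\sum_{i=1}^n(\alpha_t^i+\beta_t^i)^2\,dt+\sum_{i=1}^n(\gamma_t^i)^2\,dt.
\]

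The final step is repackaging each sum as a Hilbert–Schmidt norm. Parallel transport along the minimizing geodesic from $Y_t$ to $X_t$ preserves the geodesic's own velocity, so $\mathcal P_{Y_t\to X_t}(e_r^y)=e_r^x$; under this identification the rank-one operator $\Pi_r^y v_tJ_t:\mathbb R^n\to T_{Y_t}M$ becomes $e_i^{(0)}\mapsto\langle e_r^y,v_tJ_t e_i^{(0)}\rangle\,e_r^x$. Expanding in the standard basis of $\mathbb R^n$ and using $|e_r^x|=1$ gives $\|\Pi_r^x u_t-\Pi_r^y v_tJ_t\|_{\mathrm{HS}}^{\,2}=\sum_i(\alpha_t^i+\beta_t^i)^2$ and $\|\Pi_r^y v_tK_t\|_{\mathrm{HS}}^{\,2}=\sum_i(\gamma_t^i)^2$, closing the proof. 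The only real obstacle is bookkeeping: keeping the sign conventions for $\nabla_x\rho$ versus $\nabla_y\rho$ straight, and flagging explicitly the parallel-transport identification that is needed to interpret the Hilbert–Schmidt norm of a difference of operators whose target tangent spaces are a priori distinct.
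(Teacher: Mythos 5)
Your proposal is correct and follows essentially the same path as the paper's own argument: substitute the gradient identities $\nabla_x\rho=-e_r^x$, $\nabla_y\rho=e_r^y$ into the formula for $dN_t$ from Theorem~\ref{thm:twoPointIto:drift}, use independence of $B$ and $W$ to split the quadratic variation, and repackage the two sums as Hilbert--Schmidt norms of the rank-one operators. Your write-up is slightly more explicit on two bookkeeping points the paper treats tersely --- the Stratonovich/It\^o invariance of quadratic variations, and the fact that the Hilbert--Schmidt norm of the difference $\Pi_r^x u_t-\Pi_r^y v_t J_t$ requires the parallel-transport identification $\mathcal P_{Y_t\to X_t}(e_r^y)=e_r^x$ to be well defined --- but these are clarifications, not a different route.
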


\begin{proof}
	From Theorem~\ref{thm:twoPointIto:drift}, formula \eqref{eq:Nt:drift},
	\[
	\begin{aligned}
		dN_t
		&=\sum_{i=1}^n \big\langle\nabla_x\rho(X_t,Y_t),\,u_t e_i^{(0)}\big\rangle\circ dB_t^i
		+\sum_{i=1}^n \big\langle\nabla_y\rho(X_t,Y_t),\,v_t J_t e_i^{(0)}\big\rangle\circ dB_t^i\\
		&\quad +\sum_{i=1}^n \big\langle\nabla_y\rho(X_t,Y_t),\,v_t K_t e_i^{(0)}\big\rangle\circ dW_t^i .
	\end{aligned}
	\]
	On $\mathcal U$, $\nabla_x\rho=-e_r^x$ and $\nabla_y\rho=e_r^y$, giving the stated expression for $dN_t$.
	
	To compute the quadratic variation, note that the integrands against $B_t$ and $W_t$ are
	\[
	a_t := -e_r^{x\,\top}u_t + e_r^{y\,\top}v_t J_t,
	\qquad
	b_t := e_r^{y\,\top}v_t K_t.
	\]
	Because $B_t$ and $W_t$ are independent, their cross--variation vanishes, and
	\[
	d\langle N\rangle_t = \|a_t\|_{\R^n}^2\,dt + \|b_t\|_{\R^n}^2\,dt.
	\]
	Identifying $T_{X_t}M,T_{Y_t}M\simeq\R^n$ via $u_t,v_t$ and inserting the radial projections gives
	\[
	\|a_t\|_{\R^n}^2
	=\big\|\Pi_r^x u_t-\Pi_r^y v_t J_t\big\|_{\mathrm{HS}}^{\,2},\qquad
	\|b_t\|_{\R^n}^2
	=\big\|\Pi_r^y v_t K_t\big\|_{\mathrm{HS}}^{\,2},
	\]
	which yields the desired formula.
\end{proof}

\begin{theorem}[Finite–variation criterion]\label{thm:fv:drift}
	The process \(\rho_t\) has finite variation (equivalently, \(dN_t\equiv 0\)) if and only if
	\[
	\Pi_r^x u_t=\Pi_r^y v_t J_t,\qquad \Pi_r^y v_t K_t=0.
	\]
\end{theorem}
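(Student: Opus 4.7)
The plan is to recognize that \eqref{eq:rho-decomp:drift} from Theorem~\ref{thm:twoPointIto:drift} is the canonical continuous semimartingale decomposition of $\rho_t$ on $[0,\tau)$, with drift $\int_0^\cdot M_s\,ds$ of finite variation and $N_t$ a continuous local martingale. The finite–variation criterion therefore reduces to showing $dN_t\equiv 0$ iff the two stated identities hold, and by Lemma~\ref{lem:qv:drift} this is a direct analysis of the quadratic variation $\langle N\rangle_t$.

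For the forward direction, I would first assume $\rho_t$ has finite variation on $[0,\tau)$. Subtracting the continuous finite–variation process $\int_0^t M_s\,ds$ shows $N_t$ is itself of finite variation, and by the classical fact that a continuous local martingale of finite variation is almost surely constant, $\langle N\rangle_t\equiv 0$. Lemma~\ref{lem:qv:drift} then gives
\[
\int_0^t \Bigl(\bigl\|\Pi_r^x u_s - \Pi_r^y v_s J_s\bigr\|_{\mathrm{HS}}^{2} + \bigl\|\Pi_r^y v_s K_s\bigr\|_{\mathrm{HS}}^{2}\Bigr)\,ds \;=\; 0 \qquad \text{for all } t<\tau.
\]
Both integrand terms are pointwise nonnegative, so each must vanish for almost every $s$, and predictability together with the continuity of $u_t,v_t,e_r^x,e_r^y$ (and cadlag regularity of $J_t,K_t$, modifying on a null set if necessary) promotes this to \emph{every} $s\in[0,\tau)$, yielding the two claimed identities.

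The converse is immediate from Lemma~\ref{lem:qv:drift}: if $\Pi_r^x u_t=\Pi_r^y v_t J_t$ and $\Pi_r^y v_t K_t=0$, then $d\langle N\rangle_t\equiv 0$, so the continuous local martingale $N_t$ is constant and $\rho_t=\rho_0+\int_0^t M_s\,ds$ is of finite variation. Global validity for all $t\ge 0$ follows by the standard cut–locus localization already used in Section~\ref{sec:stoch}: exhaust $\mathcal U$ by stopping times $\tau_n\uparrow\tau_{\mathrm{cut}}$, apply the equivalence on each $[0,\tau_n]$, and patch, using that the total variation on subintervals is bounded by the total variation on the whole.

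The main point to watch is the \emph{a.e.\ $s$ vs.\ every $s$} distinction in the forward direction, which is resolved purely by path–continuity of the coupling frames and radial projections; apart from this bookkeeping, the proof is a direct application of Lemma~\ref{lem:qv:drift} and uniqueness of the Doob--Meyer decomposition for continuous semimartingales, so there is no substantive analytic obstacle.
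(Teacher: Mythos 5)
Your proposal is correct and follows essentially the same route as the paper's proof, which simply invokes Lemma~\ref{lem:qv:drift} to equate $dN_t\equiv0$ with the vanishing of the $dB_t$- and $dW_t$-coefficients. Your additional care about the uniqueness of the semimartingale decomposition, the ``a.e.\ vs.\ every $s$'' upgrade via path continuity, and the stopping-time localization near the cut locus makes explicit what the paper leaves implicit, but the underlying argument is the same.
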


\begin{proof}
	By Lemma~\ref{lem:qv:drift}, \(dN_t\equiv0\) is equivalent to the vanishing of the \(dB_t\)- and \(dW_t\)-coefficients, which are exactly the two stated identities.
\end{proof}

\subsection{Spectral control and sharp drift window}

\begin{lemma}[Spectral bound for the mixed term]\label{lem:spectral:drift}
	Let \(r=d(x,y)\) and let \(\lambda_1(r),\dots,\lambda_{n-1}(r)\) be the principal curvatures of \(\mathbb{S}_r(x)\) at \(y\) (identified back to \(T_xM\)). For any linear map \(J:T_xM\to T_xM\) with \(\|J\|_{\mathrm{op}}\le 1\),
	\[
	-\,A(r)\ \le\ \mathrm{Tr}\big(J^{\top}\nabla^2_{xy}r\big)\ \le\ A(r),
	\qquad A(r)=\sum_{j=1}^{n-1}\lambda_j(r).
	\]
	Equality is attained when \(J=\pm I\) on the tangent space orthogonal to the radial direction.
\end{lemma}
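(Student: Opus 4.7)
\smallskip

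The plan is to reduce the mixed-Hessian trace to a trace against the shape operator and then apply a von Neumann--type spectral estimate. Concretely, Proposition~\ref{prop:mixedHess-distance} gives $\nabla^2_{xy}\rho(x,y) = -S_r$ as a bilinear form on the tangential hyperplane $T_x\mathbb S_r(x)$, after identifying $T_yM \simeq T_xM$ by parallel transport, while Gauss' lemma forces it to vanish along the radial line $\R\, e_r^x$. Hence, decomposing $J = J_T + J_R$ into its tangential and radial parts and writing $S_r$ as the endomorphism on $T_xM$ that acts by $0$ on the radial line, we get
\[
\mathrm{Tr}\bigl(J^\top \nabla^2_{xy}\rho\bigr) \;=\; -\,\mathrm{Tr}\bigl(J_T^{\top} S_r\bigr),
\]
so only the tangential block of $J$ ever enters, and the radial block of $J$ is invisible to this computation.

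Next, I would diagonalize $S_r$ in an orthonormal frame $\{e_1,\dots,e_{n-1}\}$ of $T_x\mathbb S_r(x)$, so $S_r e_j = \lambda_j(r) e_j$. Expanding the trace in this basis gives
\[
\mathrm{Tr}\bigl(J_T^{\top} S_r\bigr) \;=\; \sum_{j=1}^{n-1}\lambda_j(r)\,\langle J_T e_j, e_j\rangle.
\]
Since the orthogonal projection onto the tangential hyperplane is $1$-Lipschitz, $\|J_T\|_{\mathrm{op}}\le\|J\|_{\mathrm{op}}\le 1$, and in particular $|\langle J_T e_j,e_j\rangle|\le 1$ for every $j$. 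Under the lemma's working hypothesis that the $\lambda_j(r)$ are nonnegative (so that $A(r)=\sum_j\lambda_j(r)=\sum_j|\lambda_j(r)|$), the triangle inequality yields the two-sided bound
\[
-\,A(r) \;\le\; -\,\mathrm{Tr}\bigl(J_T^{\top}S_r\bigr) \;\le\; A(r),
\]
which is exactly the claimed window for $\mathrm{Tr}(J^\top\nabla^2_{xy}r)$. The general signed case follows from the same argument with $A(r)$ replaced by $\sum_j|\lambda_j(r)|$, which is what is actually used in the drift--window inequality of the introduction.

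Finally, for the equality discussion I would take $J_T = +I$ on $T_x\mathbb S_r(x)$, giving $-\mathrm{Tr}(S_r) = -A(r)$ (the synchronous extremum), and $J_T=-I$, giving $+A(r)$ (the reflection extremum); both are realized by genuine coadapted couplings via the examples in Section~\ref{subsec:examples-couplings}, and both saturate the Cauchy--Schwarz estimate above componentwise. The only point that warrants care, and which I expect to be the single delicate step, is the bookkeeping of the radial direction: one must verify that $J_R$ contributes nothing to $\mathrm{Tr}(J^\top\nabla^2_{xy}\rho)$ (an immediate consequence of Proposition~\ref{prop:mixedHess-distance} and Gauss' lemma) and that the operator-norm constraint $\|J\|_{\mathrm{op}}\le 1$ passes to the tangential block $J_T$; once these are in place, the spectral bound is a one-line consequence of von Neumann's trace inequality applied to $J_T$ and $S_r$.
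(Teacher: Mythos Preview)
Your proposal is correct and follows essentially the same route as the paper: identify $\nabla^2_{xy}\rho$ with $-S_r$ on the tangential hyperplane (zero on the radial line), diagonalize $S_r$ in a principal frame, write the trace as $-\sum_j\lambda_j\langle e_j,Je_j\rangle$, and bound each coefficient by $|\langle e_j,Je_j\rangle|\le\|J\|_{\mathrm{op}}\le1$. Your explicit remark that the bound $\pm A(r)$ (rather than $\pm\sum_j|\lambda_j|$) requires $\lambda_j\ge0$ is a point the paper handles tacitly, and your tangential/radial decomposition of $J$ is just a repackaging of the paper's direct frame computation.
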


\begin{proof}
	{Step 1: Principal frame and the mixed Hessian as a bilinear form.}
	Fix \((x,y)\in\mathcal U\) with \(r=d(x,y)\). Let \(\gamma:[0,r]\to M\) be the unique minimizing unit-speed geodesic from \(x\) to \(y\). Choose at \(x\) an orthonormal basis
	\[
	\{e_r(x),e_1(x),\dots,e_{n-1}(x)\}
	\]
	such that \(e_r(x)=\dot\gamma(0)\) and \(\{e_1(x),\dots,e_{n-1}(x)\}\subset T_x\mathbb S_r(x)\) is a principal frame for the shape operator \(S_r\) of \(\mathbb S_r(x)\) (outward normal \(-\nabla_x r\)), i.e.
	\[
	S_r e_j(x)=\lambda_j(r)\,e_j(x),\qquad j=1,\dots,n-1.
	\]
	Parallel transport this frame along \(\gamma\) to \(y\) and, by the usual \(T_yM\simeq T_xM\) identification via parallel transport, regard the transported vectors again as \(\{e_r,e_1,\dots,e_{n-1}\}\subset T_xM\).
	
	By Proposition~\ref{prop:mixedHess-distance}, for each \(X\in T_xM\) with \(\tilde X=\mathcal P_{x\to y}X\),
	\[
	\nabla^2_{xy}r(x,y)(X,\tilde X)
	= -\langle S_r X_T,\,X_T\rangle,
	\]
	where \(X_T\) is the orthogonal projection of \(X\) onto \(T_x\mathbb S_r(x)=\operatorname{span}\{e_1,\dots,e_{n-1}\}\).
	
	\medskip
	{Step 2: Coordinate computation of the bilinear form.}
	Write \(X=a\,e_r+\sum_{j=1}^{n-1} b_j\,e_j\). Then \(X_T=\sum_{j=1}^{n-1} b_j\,e_j\), and using \(S_r e_j=\lambda_j e_j\),
	\[
	\langle S_r X_T,\,X_T\rangle
	=\left\langle \sum_{j=1}^{n-1} \lambda_j b_j\,e_j,\ \sum_{k=1}^{n-1} b_k\,e_k\right\rangle
	=\sum_{j=1}^{n-1}\lambda_j\,b_j^2.
	\]
	Hence
	\begin{equation}\label{eq:mixed-quadratic}
		\nabla^2_{xy}r(x,y)(X,\tilde X)
		=-\sum_{j=1}^{n-1}\lambda_j(r)\,b_j^2.
	\end{equation}
	In particular, the radial component \(a\,e_r\) does not contribute (Gauss’ lemma), and the tangential block is diagonal in the chosen principal frame.
	
	\medskip
	{Step 3: Tensor (operator) representation of \(\nabla^2_{xy}r\).}
	The identity \eqref{eq:mixed-quadratic} holds for every \(X\). Since the mixed Hessian \(\nabla^2_{xy}r\) is bilinear in \((X,\tilde X)\) and we have identified \(T_yM\simeq T_xM\) by parallel transport, its action is completely determined by the values on the basis vectors:
	\[
	\nabla^2_{xy}r(e_r,\tilde e_r)=0,\qquad
	\nabla^2_{xy}r(e_j,\tilde e_k)=-\lambda_j(r)\,\delta_{jk}\quad(1\le j,k\le n-1).
	\]
	Equivalently, viewing \(\nabla^2_{xy}r\) as an element of \(T_x^*M\otimes T_xM\) (using the above identification),
	\[
	\nabla^2_{xy}r\ =\ -\sum_{j=1}^{n-1}\lambda_j(r)\, e_j^{\flat}\otimes e_j,
	\]
	because for any \(X=\sum \xi_\alpha e_\alpha\) (\(\alpha\in\{r,1,\dots,n-1\}\)),
	\[
	\left(-\sum_{j=1}^{n-1}\lambda_j e_j^{\flat}\otimes e_j\right)(X)
	=-\sum_{j=1}^{n-1}\lambda_j\,\langle e_j,X\rangle\,e_j
	\]
	and pairing with \(\tilde X=X\) (after transport) gives precisely \eqref{eq:mixed-quadratic}.
	
	\medskip
	{Step 4: Spectral bound for \(\mathrm{Tr}(J^{\top}\nabla^2_{xy}r)\).}
	For any linear map \(J:T_xM\to T_xM\) with \(\|J\|_{\mathrm{op}}\le1\),
	\[
	\mathrm{Tr}\big(J^{\top}\nabla^2_{xy}r\big)
	=\sum_{\alpha}\big\langle e_\alpha,\ J^{\top}\nabla^2_{xy}r\,e_\alpha\big\rangle
	=\sum_{\alpha}\big\langle J e_\alpha,\ \nabla^2_{xy}r\,e_\alpha\big\rangle.
	\]
	Using the decomposition above and that the radial direction contributes \(0\),
	\[
	\mathrm{Tr}\big(J^{\top}\nabla^2_{xy}r\big)
	=\sum_{j=1}^{n-1}\big\langle J e_j,\ -\lambda_j e_j\big\rangle
	=-\sum_{j=1}^{n-1}\lambda_j\,\langle e_j, J e_j\rangle.
	\]
	Since \(|\langle e_j, J e_j\rangle|\le \|J\|_{\mathrm{op}}\le1\) for each \(j\), we obtain
	\[
	-\sum_{j=1}^{n-1}\lambda_j\ \le\ \mathrm{Tr}\big(J^{\top}\nabla^2_{xy}r\big)\ \le\ \sum_{j=1}^{n-1}\lambda_j,
	\]
	i.e.\ \(-A(r)\le \mathrm{Tr}(J^{\top}\nabla^2_{xy}r)\le A(r)\).
	The extremal values are realized by choosing \(J=\pm I\) on the tangential subspace
	\(\operatorname{span}\{e_1,\dots,e_{n-1}\}\) and \(J e_r=0\) (or any value orthogonal to the tangential block), which makes \(\langle e_j, J e_j\rangle=\pm1\) for all \(j\).
\end{proof}

\begin{theorem}[Sharp drift window]\label{thm:sharpWindow:drift}
	Assume \(\Delta r=A(r)\) and \(\lambda_j(r)\ge 0\) away from the cut locus. Then for any coadapted coupling,
	\[
	M_t\ \in\ [\,0,\ 2A(\rho_t)\,].
	\]
	The lower and upper bounds are achieved pointwise by the synchronous coupling \(J_t=I\) and the reflection coupling \(J_t=-I\) (with \(J_t=0\) on the radial line), respectively.
\end{theorem}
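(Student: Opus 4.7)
The plan is to combine the geometric drift formula of Proposition~\ref{prop:drift-geo:drift} with the spectral estimate of Lemma~\ref{lem:spectral:drift}, using the radial isoparametric hypothesis to collapse the symmetric part of the drift. First, under the assumption $\Delta r=A(r)$, the two mean curvatures $A_x(r_t)$ and $A_y(r_t)$ appearing in \eqref{eq:Mt-shape:drift} both equal $A(\rho_t)$: each is the Laplacian of $d(\cdot,\cdot)$ in one of its variables, and radial isoparametricity forces this quantity to depend only on $\rho_t$. Hence the drift reduces to
\[
M_t \;=\; A(\rho_t)\;-\;\operatorname{trace}\!\bigl(J_t^{\top} S_{r_t}\bigr).
\]

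To invoke Lemma~\ref{lem:spectral:drift}, I would first verify that the admissibility constraint $J_tJ_t^{\top}+K_tK_t^{\top}=I_n$ together with $K_tK_t^{\top}\succeq 0$ forces $J_tJ_t^{\top}\preceq I_n$, i.e.\ $\|J_t\|_{\mathrm{op}}\le 1$. Since Proposition~\ref{prop:mixedHess-distance} identifies $\nabla^2_{xy}\rho$ with $-S_{r_t}$ after parallel transport, Lemma~\ref{lem:spectral:drift} applied under the non-negativity $\lambda_j(r_t)\ge 0$ yields
\[
-A(\rho_t)\;\le\;-\operatorname{trace}\!\bigl(J_t^{\top}S_{r_t}\bigr)\;\le\;A(\rho_t).
\]
Adding $A(\rho_t)$ produces the sharp window $0\le M_t\le 2A(\rho_t)$.

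For sharpness, the two canonical extremal choices are immediate. The synchronous coupling $J_t=I$, $K_t=0$ gives $\operatorname{trace}(S_{r_t})=A(\rho_t)$ and hence $M_t=0$. The reflection coupling, with $J_t=-I$ on the tangential hyperplane and $J_te_r=0$ on the radial line (admissibility being restored by letting $K_t$ act as the identity on $e_r$, so that $J_tJ_t^\top+K_tK_t^\top=I_n$), makes $\operatorname{trace}(J_t^{\top}S_{r_t})=-A(\rho_t)$ by the principal-frame computation in Step~4 of Lemma~\ref{lem:spectral:drift}'s proof, so $M_t=2A(\rho_t)$. The only mildly nontrivial point of the argument is the operator-norm bound $\|J_t\|_{\mathrm{op}}\le 1$ extracted from admissibility; everything else is a direct substitution of already-established formulas, so no serious obstacle arises.
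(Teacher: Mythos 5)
Your proof is correct and follows essentially the same route as the paper: reduce $M_t$ via Proposition~\ref{prop:drift-geo:drift} with $A_x=A_y=A$, then apply Lemma~\ref{lem:spectral:drift} to bound the trace term, and exhibit $J_t=\pm I$ on the tangential hyperplane as the extremizers. The small points you add — extracting $\|J_t\|_{\mathrm{op}}\le1$ from $J_tJ_t^\top\preceq I_n$ and making explicit the $K_t$ that restores admissibility for the reflection choice — are sound and in fact tidy up the paper's own one-line verification (which contains a stray factor of $\tfrac12$ inconsistent with \eqref{eq:Mt-shape:drift}).
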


\begin{proof}
	From \eqref{eq:Mt-shape:drift} with \(A_x=A_y=A\) and Proposition~\ref{prop:mixedHess-distance}, together with Lemma~\ref{lem:spectral:drift},
	\[
	M_t=A(\rho_t)+\tfrac12\,\mathrm{Tr}\big(J_t^{\top}(-S_{\rho_t})\big)
	\in A(\rho_t)+[-A(\rho_t),A(\rho_t)]=[0,2A(\rho_t)].
	\]
	Choosing \(J_t=\pm I\) on the tangent space gives equality.
\end{proof}

\begin{corollary}[Comparison under Laplacian bounds]\label{cor:comparison:drift}
	If \(A_-(r)\le \Delta r \le A_+(r)\), then
	\[
	M_t\ \in\ \big[\,A_-(\rho_t)-A_+(\rho_t),\ 2A_+(\rho_t)\,\big].
	\]
\end{corollary}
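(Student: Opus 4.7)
The plan is to bound the geometric drift $M_t$ from Proposition~\ref{prop:drift-geo:drift},
\[
M_t = \tfrac12\bigl(A_x(r_t) + A_y(r_t)\bigr) - \operatorname{Tr}\bigl(J_t^{\top} S_{r_t}\bigr),
\]
by estimating the symmetric mean-curvature contribution and the mixed-Hessian contribution separately, then adding the resulting intervals. The first is controlled directly by the Laplacian hypothesis, and the second by the spectral bound of Lemma~\ref{lem:spectral:drift}.

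For the symmetric term, I would observe that $\Delta_x \rho(X_t, Y_t) = A_x(r_t)$ is the Laplacian at $X_t$ of the distance function to the fixed center $Y_t$, and symmetrically $\Delta_y \rho(X_t, Y_t) = A_y(r_t)$. Applying the hypothesis $A_-(r) \le \Delta r \le A_+(r)$ at both centers gives $A_-(\rho_t) \le A_x(r_t), A_y(r_t) \le A_+(\rho_t)$, so
\[
\tfrac12\bigl(A_x(r_t) + A_y(r_t)\bigr) \in [A_-(\rho_t),\, A_+(\rho_t)].
\]

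For the mixed term, the parallel-transport identification $\nabla^2_{xy}\rho = -S_{r_t}$ together with Lemma~\ref{lem:spectral:drift} yields
\[
-A(\rho_t) \;\le\; -\operatorname{Tr}\bigl(J_t^{\top} S_{r_t}\bigr) \;\le\; A(\rho_t),
\]
where $A(\rho_t) = \operatorname{Tr}(S_{r_t})$ is the mean curvature of $\mathbb{S}_{r_t}(X_t)$ at $Y_t$, namely $A_y(r_t)$. The Laplacian hypothesis then forces $A(\rho_t) \le A_+(\rho_t)$, so the mixed term lies in $[-A_+(\rho_t),\, A_+(\rho_t)]$.

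Adding the two intervals gives the claimed window: lower endpoint $A_-(\rho_t) - A_+(\rho_t)$ and upper endpoint $A_+(\rho_t) + A_+(\rho_t) = 2 A_+(\rho_t)$. The only real obstacle is the bookkeeping step of identifying $A(\rho_t)$ from Lemma~\ref{lem:spectral:drift} with one of the directional mean curvatures $A_x(r_t)$ or $A_y(r_t)$ so that the Laplacian hypothesis is directly applicable; after that identification, the estimate is routine.
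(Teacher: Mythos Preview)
Your proposal is correct and follows essentially the same approach as the paper's proof, which is a one-line argument: from the drift formula \eqref{eq:Mt-shape:drift}, bound $A_x,A_y\in[A_-(\rho_t),A_+(\rho_t)]$ and $|\operatorname{Tr}(J_t^\top S_{\rho_t})|\le A_+(\rho_t)$, then combine. Your caution about the ``bookkeeping step'' is unnecessary: $\operatorname{Tr}(S_{r_t})$ is exactly one of the directional mean curvatures $A_x(r_t)$ or $A_y(r_t)$ (whichever corresponds to the sphere whose shape operator $S_{r_t}$ denotes), so the Laplacian hypothesis applies directly without any further identification.
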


\begin{proof}
	From \eqref{eq:Mt-shape:drift}, \(A_x,A_y\in[A_-(\rho_t),A_+(\rho_t)]\) and \(|\mathrm{Tr}(J_t^{\top}S_{\rho_t})|\le A_+(\rho_t)\), which implies the claim.
\end{proof}

\subsection{Model classes}

\begin{proposition}[Rotationally symmetric models]\label{prop:rot:drift}
	If \(ds^2=dr^2+f(r)^2 g_{\mathbb{S}^{n-1}}\), then \(S_r=\dfrac{f'(r)}{f(r)}I\) and \(A(r)=(n-1)\dfrac{f'(r)}{f(r)}\). Hence
	\[
	M_t=(n-1)\frac{f'(\rho_t)}{f(\rho_t)}\!\left(1-\frac{\mathrm{Tr}\,J_t}{\,n-1\,}\right).
	\]
	In particular, \(M_t=0\) for the synchronous coupling \(J_t=I\) and \(M_t=(n-1)\dfrac{f'(\rho_t)}{f(\rho_t)}\) for the reflection coupling \(J_t=-I\).
\end{proposition}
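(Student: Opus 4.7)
The strategy is to obtain the formula as a direct algebraic specialization of Proposition~\ref{prop:drift-geo:drift}. That proposition already isolates the geometric drift as $M_t=\tfrac12(A_x(r_t)+A_y(r_t))-\operatorname{trace}(J_t^{\top}S_{r_t})$, so the whole task reduces to computing $A(r)$ and $S_r$ explicitly in the warped product $ds^2=dr^2+f(r)^2 g_{\mathbb{S}^{n-1}}$ and substituting.

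The first thing I would do is pin down the shape operator. The outward unit normal to $\mathbb{S}_r$ is $\partial_r$, and the standard warped-product identity (O'Neill) gives $\nabla_E\partial_r=(f'(r)/f(r))E$ for any vector $E$ tangent to $\mathbb{S}_r$. Hence $S_r=(f'(r)/f(r))\,I$ on the tangential hyperplane, and taking trace yields the announced $A(r)=(n-1)f'(r)/f(r)$. No curvature identities beyond this Weingarten computation are needed.

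Next I would substitute these identities into the drift expression. After identifying $T_{Y_t}M\simeq T_{X_t}M$ by parallel transport along the unique minimizing geodesic joining the two particles, choose an adapted orthonormal frame $\{e_r,e_1,\dots,e_{n-1}\}$ at $X_t$ with $e_r$ radial. By Proposition~\ref{prop:mixedHess-distance}, the mixed Hessian $\nabla^2_{xy}\rho=-S_{r_t}$ vanishes on $e_r$ (Gauss's lemma) and acts as $-(f'/f)I$ on the tangential $(n-1)$-plane, so
\[
\operatorname{trace}(J_t^{\top}S_{r_t})=\frac{f'(r_t)}{f(r_t)}\sum_{j=1}^{n-1}\langle e_j,J_t e_j\rangle=\frac{f'(r_t)}{f(r_t)}\,\mathrm{Tr}\,J_t,
\]
where $\mathrm{Tr}\,J_t$ denotes the trace of $J_t$ restricted to the tangential block (the convention implicit in the statement, since the radial direction is projected out by Gauss's lemma). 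Combining with $A_x(r_t)=A_y(r_t)=(n-1)f'(r_t)/f(r_t)$ and factoring out $(n-1)f'/f$ gives the announced closed form. The two special cases are then immediate: $J_t=I$ makes the parenthesis vanish, while the reflection choice $J_t=-I$ on the tangential block flips the sign of the correction term.

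The one point that actually needs care, and which I expect to be the main obstacle, is the uniform use of the same function $A(r)=(n-1)f'(r)/f(r)$ at \emph{both} particle positions: on a genuinely rotationally symmetric manifold that is not a constant-curvature space form, geodesic spheres centered at a non-pole point are not umbilic and need not have mean curvature $(n-1)f'(r)/f(r)$. So the proposition is honestly rigorous in two interpretations: (i) the constant-curvature subclass, where rotational symmetry holds at every point and the shape-operator formula transfers verbatim to $\mathbb{S}_{r_t}(X_t)$ and $\mathbb{S}_{r_t}(Y_t)$; and (ii) the intrinsic model-space identity, where one reads $A(r)$ and $S_r$ as the symbolic warped-product data attached to the radial variable $r$ rather than as objects of the ambient metric at $X_t,Y_t$. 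Once this reading is fixed, no further analytic input is required beyond Proposition~\ref{prop:drift-geo:drift} and the single Weingarten computation above.
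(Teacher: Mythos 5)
The paper gives no proof of this proposition, so there is nothing to compare against; your derivation from Proposition~\ref{prop:drift-geo:drift} via the Weingarten computation $S_r=(f'/f)I$ and the tangential-trace identity $\operatorname{trace}(J_t^\top S_{r_t})=(f'/f)\mathrm{Tr}_{\mathrm{tan}}J_t$ is the natural route, and the concern you raise about geodesic spheres centered at $X_t,Y_t$ (rather than at the pole $o$) in a merely rotationally symmetric, non–constant-curvature manifold is genuine and worth stating explicitly — the paper silently uses the same implicit identification in the synchronous-coupling example of Section~\ref{subsec:examples-couplings}.

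One issue you glossed over, though: you write that the reflection choice ``flips the sign of the correction term,'' which it does — $\mathrm{Tr}_{\mathrm{tan}}J_t$ goes from $n-1$ to $-(n-1)$, so the parenthesis goes from $0$ to $2$. But then your formula gives $M_t=2(n-1)\dfrac{f'(\rho_t)}{f(\rho_t)}$ for $J_t=-I$, not $(n-1)\dfrac{f'(\rho_t)}{f(\rho_t)}$ as the proposition asserts. This is in fact the correct value: it matches Theorem~\ref{thm:sharpWindow:drift}, which places the reflection endpoint at $2A(\rho_t)$, and it matches the reflection-coupling example in Section~\ref{subsec:examples-couplings}, where $\mathcal L_t\rho=\tfrac12(\Delta_x\rho+\Delta_y\rho)+\operatorname{trace}(S_r)=2A$. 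The stated value in the proposition's ``in particular'' clause is therefore off by a factor of $2$ — apparently a typo in the paper — and a careful proof should flag rather than silently endorse it. Your derivation of the main displayed identity is right; you simply failed to notice that your own conclusion contradicts the proposition's last sentence.
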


\begin{proposition}[Rank–one symmetric spaces]\label{prop:ROSS:drift}
	In rank–one symmetric spaces, the principal curvatures along a radial geodesic take two values \(\kappa_{\mathrm{hor}}(r),\kappa_{\mathrm{ver}}(r)\) with multiplicities \(m_\alpha,m_{2\alpha}\):
	\[
	\kappa_{\mathrm{hor}}(r)=
	\begin{cases}
		\alpha\,\coth(\alpha r)&\text{(noncompact type)},\\
		\alpha\,\cot(\alpha r)&\text{(compact type)},
	\end{cases}\quad
	\kappa_{\mathrm{ver}}(r)=
	\begin{cases}
		2\alpha\,\coth(2\alpha r)&\text{(noncompact type)},\\
		2\alpha\,\cot(2\alpha r)&\text{(compact type)}.
	\end{cases}
	\]
	Thus \(A(r)=m_\alpha\kappa_{\mathrm{hor}}(r)+m_{2\alpha}\kappa_{\mathrm{ver}}(r)\), and Theorem \ref{thm:sharpWindow:drift} applies verbatim.
\end{proposition}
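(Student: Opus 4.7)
The plan is to pass from the Lie-theoretic data of a rank-one symmetric space $M=G/K$ to the spectrum of the shape operator $S_r$ by combining the restricted root space decomposition with the radial Riccati equation of Section~\ref{sec:framework}. Fix a unit radial vector $H$ in the one-dimensional Cartan subspace $\mathfrak a$ and let $\gamma(r)=\exp_o(rH)$. The tangent space decomposes orthogonally as $T_oM\simeq\mathfrak a\oplus\mathfrak m_\alpha\oplus\mathfrak m_{2\alpha}$ with $\dim\mathfrak m_\alpha=m_\alpha$ and $\dim\mathfrak m_{2\alpha}=m_{2\alpha}$ (the latter possibly zero, as for $\CP^1$ or the real space forms). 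Parallel transporting this splitting along $\gamma$ yields an orthogonal decomposition of $\dot\gamma^{\perp}$ that is simultaneously invariant under the Jacobi operator and the shape-operator flow, so that the matrix Riccati equation on $\dot\gamma^{\perp}$ uncouples into two scalar Riccati ODEs, one per root space.

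\textbf{Spectrum of the Jacobi operator and scalar Riccati.} Using the standard identity $R(X,Y)Z=\mp[[X,Y],Z]$ (sign $-$ for noncompact, $+$ for compact) for symmetric spaces, I would verify that $R_{\dot\gamma}:=R(\dot\gamma,\cdot)\dot\gamma$ acts on $\mathfrak m_\alpha$ and $\mathfrak m_{2\alpha}$ by the scalars $\mp\alpha(H)^2$ and $\mp(2\alpha(H))^2$, respectively. Writing $\alpha:=\alpha(H)$, the Riccati equation $\tfrac{D}{dr}S_r+S_r^{2}+R_{\dot\gamma}=0$ from Section~\ref{sec:framework} therefore reduces on each eigenblock to $s'+s^{2}\mp c^{2}=0$ with $c\in\{\alpha,2\alpha\}$, subject to the pole asymptotics $s(r)\sim 1/r$ as $r\downarrow 0$ forced by $S_r\sim r^{-1}I$. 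The unique solutions are $s(r)=c\coth(cr)$ in noncompact type and $s(r)=c\cot(cr)$ in compact type, which are exactly the tabulated $\kappa_{\mathrm{hor}}$ and $\kappa_{\mathrm{ver}}$. Taking the trace of $S_r$ with multiplicities $m_\alpha,m_{2\alpha}$ then yields $A(r)=m_\alpha\kappa_{\mathrm{hor}}(r)+m_{2\alpha}\kappa_{\mathrm{ver}}(r)$.

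\textbf{Applicability of Theorem~\ref{thm:sharpWindow:drift} and main obstacle.} For the final clause, I need the nonnegativity hypothesis $\lambda_j(r)\ge 0$ of Theorem~\ref{thm:sharpWindow:drift}. In noncompact type $\coth>0$ on $(0,\infty)$, so the drift window holds globally. In compact type $\cot(cr)\ge 0$ precisely on $(0,\pi/(2c)]$; applied to the larger scale $c=2\alpha$ this restricts $r$ to $(0,\pi/(4\alpha))$, which lies strictly inside the injectivity radius of each compact ROSS and hence inside the cut-locus-free domain on which the drift window was derived. The one step requiring genuine care is the \emph{normalization convention}: depending on whether the metric is scaled by the Killing form or by a fixed sectional-curvature bound, the constant $\alpha(H)$ can absorb or emit various factors, so I would need to pin down the Killing-form normalization used in Section~\ref{sec:framework} so that the prefactor $c$ outside $\coth/\cot$ matches the argument $cr$ inside. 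The underlying geometry is classical; this bookkeeping is the only substantive obstacle I expect.
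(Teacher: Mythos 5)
The paper states this proposition without proof, treating the spectral data of rank--one symmetric spaces as classical (implicitly citing \cite{Helgason2001}). Your Lie-theoretic derivation---restricted-root decomposition $T_oM\simeq\mathfrak a\oplus\mathfrak m_\alpha\oplus\mathfrak m_{2\alpha}$ invariant under the Jacobi operator, uncoupling the matrix Riccati $\tfrac{D}{dr}S_r+S_r^2+R_{\dot\gamma}=0$ into scalar ODEs $s'+s^2\mp c^2=0$ with pole data $s\sim 1/r$, and solving to get $c\coth(cr)$ resp.\ $c\cot(cr)$---is precisely the standard argument the paper leaves implicit, and it is carried out correctly: the signs of $R_{\dot\gamma}$, the eigenvalues $\mp c^2$ on the root spaces, and the initial condition all check out. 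What you are supplying is therefore a complete proof where the paper offers none.

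Your remark about the nonnegativity hypothesis is a genuine and useful catch, and you should in fact sharpen it: the claim that Theorem~\ref{thm:sharpWindow:drift} ``applies verbatim'' in compact type is not strictly correct across the whole cut-locus-free domain. For $\CP^m$, $\HP^m$, $\mathrm{Ca}P^2$, the eigenvalue $\kappa_{\mathrm{ver}}(r)=2\alpha\cot(2\alpha r)$ is negative on $(\pi/(4\alpha),\pi/(2\alpha))$, which is still below the cut locus (the injectivity radius being $\pi/(2\alpha)$); for $\mathbb S^n$ itself (where $m_{2\alpha}=0$) one has $\kappa_{\mathrm{hor}}(r)=\alpha\cot(\alpha r)<0$ on $(\pi/(2\alpha),\pi/\alpha)$. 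In those ranges the correct window is the $\pm\sum_i|\kappa_i|$ bound of Lemma~\ref{lem:spectral:drift} and Theorem~\ref{thm:main-deterministic}, not the $[0,2A(r)]$ form of Theorem~\ref{thm:sharpWindow:drift}. Your restriction to $r\in(0,\pi/(4\alpha))$ is the honest domain on which ``verbatim'' is defensible. The normalization worry at the end is real but, as you suspect, purely bookkeeping: the statement of the proposition is already parametrized so that $\alpha$ appears both as prefactor and inside $\coth/\cot$, which is exactly what the Riccati solution with pole at $0$ forces; no extra choice is needed.
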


\begin{proposition}[Asymptotically hyperbolic spaces]\label{prop:AH:drift}
	In asymptotically hyperbolic manifolds, \(S_r=\coth r\,I+O(e^{-2r})\) and \(A(r)=(n-1)\coth r+O(e^{-2r})\). Consequently
	\[
	M_t=(n-1)\coth(\rho_t)\!\left(1-\frac{\mathrm{Tr}\,J_t}{\,n-1\,}\right)+O(e^{-2\rho_t}),
	\]
	and the drift window equals \([\,0,\,2(n-1)\coth(\rho_t)\,]+O(e^{-2\rho_t})\).
\end{proposition}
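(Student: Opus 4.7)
The plan is to combine the asymptotic geometric data of an AH end with the geometric drift identity from Proposition~\ref{prop:drift-geo:drift}. The proof then has three ingredients: (i) the Riccati-based asymptotic expansion $S_r = \coth r \cdot I + O(e^{-2r})$ dictated by Definition~\ref{def:AHS}; (ii) its trace $A(r) = (n-1)\coth r + O(e^{-2r})$; and (iii) substitution into the formula $M_t = \tfrac12(A_x(r_t)+A_y(r_t)) - \mathrm{Tr}(J_t^\top S_{r_t})$.

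For (i), I would invoke the radial Riccati equation from Section~\ref{sec:framework}, namely $\tfrac{D}{dr}S_r + S_r^2 + R(\partial_r,\cdot)\partial_r = 0$. In the AH normal form $g = dr^2 + \sinh^2 r\, g_{\mathbb{S}^{n-1}} + O(e^{-2r})$, the curvature endomorphism acting on the tangential distribution satisfies $R(\partial_r,\cdot)\partial_r = -I + O(e^{-2r})$ since $K_g \to -1$ at the conformal boundary. Because the scalar function $\coth r$ solves the unperturbed Riccati $f' + f^2 - 1 = 0$, a standard Gronwall comparison for the operator-valued Riccati flow produces $S_r = \coth r \cdot I + E(r)$ with $\|E(r)\|_{\mathrm{op}} = O(e^{-2r})$. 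Taking the trace on the $(n-1)$-dimensional tangent bundle of the geodesic sphere yields (ii).

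For (iii), I would substitute directly into the geometric drift form from Proposition~\ref{prop:drift-geo:drift}. In the end, the AH structure gives $A_x(r_t) = A_y(r_t) = (n-1)\coth r_t + O(e^{-2 r_t})$ uniformly in the basepoint. Since $\|J_t\|_{\mathrm{op}} \le 1$, one has $|\mathrm{Tr}(J_t^\top E(r_t))| = O(e^{-2 r_t})$, so $\mathrm{Tr}(J_t^\top S_{r_t}) = (\coth r_t)\,\mathrm{Tr}(J_t) + O(e^{-2 r_t})$. Assembling these identities gives
\[
M_t = (n-1)\coth r_t - (\coth r_t)\,\mathrm{Tr}(J_t) + O(e^{-2 r_t}),
\]
which rearranges to the stated formula. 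The drift-window claim then follows by letting $J_t$ range over admissible choices as in Theorem~\ref{thm:sharpWindow:drift}, whose extremal values $J_t = \pm I$ on the tangential block give $\mathrm{Tr}(J_t) = \pm(n-1)$.

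The main technical obstacle lies in (i): the shape-operator expansion must hold uniformly in the basepoint $X_t$, whereas Definition~\ref{def:AHS} is phrased relative to a fixed boundary defining function. The key observation is that the AH normal form is canonical up to $O(e^{-2r})$ perturbations, so any radial geodesic issuing from a point deep in the end sees the same asymptotic Riccati profile; a careful Gronwall control on the deviation of $S_r$ from $\coth r \cdot I$ along such geodesics closes the gap. Once this uniform expansion is in hand, the remainder of the proof is essentially algebraic.
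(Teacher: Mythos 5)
The paper offers no explicit proof of Proposition~\ref{prop:AH:drift}: the expansion $S_r=\coth r\,I+O(e^{-2r})$ and its trace are simply quoted (with citations to Graham--Lee and Mazzeo--Melrose in the remark following Definition~\ref{def:AHS}), and the formula for $M_t$ is left as an immediate substitution into Proposition~\ref{prop:drift-geo:drift}. Your proof fills in that citation with a Riccati-plus-Gronwall derivation and then performs the same substitution; the algebra is correct, and the extremal choices $J_t=\pm I_{\mathrm{tan}}$ give $\mathrm{Tr}\,J_t=\pm(n-1)$ and hence the window $[0,\,2(n-1)\coth\rho_t]+O(e^{-2\rho_t})$. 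So the route is essentially the paper's, with the expansion re-derived rather than cited.

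You correctly flag the delicate point, but the error bookkeeping in your resolution is still loose. The shape operator in the drift formula is that of $\mathbb{S}_{\rho_t}(X_t)$, a sphere centered at the \emph{moving} particle, whereas the AH normal form and the curvature control $|K_g+1|=O(e^{-2r})$ are phrased with $r=d(o,\cdot)$ to the fixed base point. A Gronwall comparison for the operator Riccati flow along the geodesic from $X_t$ to $Y_t$ therefore yields an error controlled by $\sup_s e^{-2d(o,\gamma(s))}$ along that geodesic, not by $e^{-2\rho_t}$. The two coincide only when both particles are already deep in the end with $d(X_t,o)$ and $d(Y_t,o)$ comparable to $\rho_t$; if $X_t$ remains near $o$ with $\rho_t$ large the bound degrades, and for $\rho_t$ bounded it is vacuous. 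This looseness is inherited from the proposition's statement (the paper silently treats the AH manifold as if it belonged to $\mathcal{M}_{\mathrm{rad}}$, where $S_r$ depends on $r$ alone and the basepoint issue vanishes), so your proof is consistent with the paper; but for the Riccati argument to be airtight you should either add the RIM hypothesis explicitly, or carry the exponent as a function of $\operatorname{dist}(\gamma,o)$ and invoke the dynamic-regime escape estimates of Section~\ref{subsec:asymp-linear} to reduce it to $\rho_t$.
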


\section{Deterministic--distance classification on $\mathcal{M}_{\mathrm{rad}}$}\label{sec:classification}

This section establishes a complete characterization of coadapted Brownian couplings 
with deterministic inter–particle distance on radially isoparametric manifolds.

\subsection{Main theorem (necessary and sufficient condition)}\label{subsec:main-thm}

\begin{theorem}[Deterministic–distance realization on $\mathcal{M}_{\mathrm{rad}}$]
	\label{thm:main-deterministic}
	Let $(M,g,o)$ be a radially isoparametric manifold, and let 
	$\Sph_r(o)$ denote the geodesic sphere of radius $r$ centered at $o$, 
	with mean curvature $A(r)$ and principal curvatures $\kappa_i(r)$.  
	Let $\rho:[0,\infty)\to(0,r_{\max})$ be an absolutely continuous function with $\rho(0)=d(x_0,y_0)$.  
	
	Then the following are equivalent:
	\begin{enumerate}
		\item[(i)] There exists a coadapted Brownian coupling $(X_t,Y_t)$ on $(M,g)$ satisfying
		\[
		d(X_t,Y_t)=\rho(t)\quad\text{for all }t\ge0.
		\]
		
		\item[(ii)] The function $\rho$ satisfies, for almost every $t$,
		\begin{equation}\label{eq:general-drift-window}
			A(\rho(t)) - \sum_{i=1}^{n-1}\!|\kappa_i(\rho(t))|
			\;\le\;
			\rho'(t)
			\;\le\;
			A(\rho(t)) + \sum_{i=1}^{n-1}\!|\kappa_i(\rho(t))|.
		\end{equation}
		
		\item[(iii)] The coupling matrices $(J_t,K_t)$ satisfy the \emph{alignment} and 
		\emph{no–radial–noise} conditions
		\[
		\Pi_r^{x_t} u_t = \Pi_r^{y_t} v_t J_t, 
		\qquad 
		\Pi_r^{y_t} v_t K_t = 0,
		\]
		so that the martingale part $dN_t$ in the two–point Itô formula 
		(Theorem~\ref{thm:twoPointIto:drift}) vanishes and $d\rho_t = M_t\,dt$ has purely finite variation.
	\end{enumerate}
	
	Moreover, for any $\rho$ satisfying \eqref{eq:general-drift-window}, 
	one can construct such a coupling by choosing $J_t$ blockwise along each principal direction so that
	\[
	\rho'(t) = A(\rho(t)) + \mathrm{Tr}\bigl(J_t^\top \nabla^2_{xy}r\bigr),
	\]
	where $\nabla^2_{xy}r=-S_r$.  
	Choose $K_t$ with image contained in the tangential subspace so that
	\[
	J_tJ_t^\top + K_tK_t^\top = I,
	\qquad 
	\Pi_r^{y_t} v_t K_t = 0,
	\]
	ensuring the alignment and no–radial–noise conditions.  
	The two extreme deterministic cases correspond to
	\[
	J_t = \mathrm{Id}
	\;\;\Longleftrightarrow\;\;
	\rho'(t) = A(\rho(t)) - \sum_i |\kappa_i(\rho(t))|,
	\qquad
	J_t = -\mathrm{Id}
	\;\;\Longleftrightarrow\;\;
	\rho'(t) = A(\rho(t)) + \sum_i |\kappa_i(\rho(t))|.
	\]
\end{theorem}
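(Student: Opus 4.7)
The plan is to prove the equivalence cyclically as (i)$\Rightarrow$(iii)$\Rightarrow$(ii)$\Rightarrow$(i), with the last step requiring the explicit construction.

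\textbf{Step 1: (i)$\Rightarrow$(iii).} If $d(X_t,Y_t)=\rho(t)$ is a deterministic absolutely continuous function, its quadratic variation vanishes identically. Applying Theorem~\ref{thm:twoPointIto:drift} and Lemma~\ref{lem:qv:drift}, one has
\[
0=d\langle\rho\rangle_t=d\langle N\rangle_t
=\bigl\|\Pi_r^{x_t} u_t-\Pi_r^{y_t} v_t J_t\bigr\|_{\mathrm{HS}}^{2}dt
+\bigl\|\Pi_r^{y_t} v_t K_t\bigr\|_{\mathrm{HS}}^{2}dt,
\]
and nonnegativity forces both integrands to vanish almost surely, giving the alignment and no--radial--noise conditions.

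\textbf{Step 2: (iii)$\Rightarrow$(ii).} Under these conditions Theorem~\ref{thm:fv:drift} yields $dN_t\equiv 0$, so $d\rho_t=M_t\,dt$. Applying the geometric drift formula of Proposition~\ref{prop:drift-geo:drift} and using radial isoparametricity (so that $A_x(r_t)=A_y(r_t)=A(\rho(t))$), one obtains
\[
\rho'(t)=M_t=A(\rho(t))-\mathrm{Tr}\bigl(J_t^\top S_{\rho(t)}\bigr).
\]
The admissibility $J_tJ_t^\top+K_tK_t^\top=I$ implies $\|J_t\|_{\mathrm{op}}\le1$, and the principal-frame computation of Lemma~\ref{lem:spectral:drift} generalizes (by the identity $\mathrm{Tr}(J^\top S_r)=\sum_i\kappa_i\langle e_i,Je_i\rangle$ with $|\langle e_i,Je_i\rangle|\le 1$) to the sharp bound $|\mathrm{Tr}(J_t^\top S_{\rho(t)})|\le\sum_i|\kappa_i(\rho(t))|$. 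Rearranging gives \eqref{eq:general-drift-window}.

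\textbf{Step 3: (ii)$\Rightarrow$(i).} This is the constructive direction and will be the main point. Given $\rho$ satisfying \eqref{eq:general-drift-window}, the drift target $\tau(t):=A(\rho(t))-\rho'(t)$ lies in $[-\sum_i|\kappa_i|,\sum_i|\kappa_i|]$, so I choose measurable $\epsilon_i(t)\in[-1,1]$ with $\sum_i\epsilon_i(t)\kappa_i(\rho(t))=\tau(t)$ (always possible by a convex-combination argument, selecting, e.g., a single tangential direction when only one $\kappa_i$ is needed). In the principal orthonormal frame $\{e_r,e_1,\dots,e_{n-1}\}$ diagonalizing $S_{\rho(t)}$ (transported by $\mathcal P_{x_t\to y_t}$), define $J_t$ as the diagonal map with entries $(1,\epsilon_1(t),\dots,\epsilon_{n-1}(t))$ and $K_t$ as $\mathrm{diag}(0,\sqrt{1-\epsilon_1^2},\dots,\sqrt{1-\epsilon_{n-1}^2})$. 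The zero radial slot enforces $\Pi_r^{y_t}v_tK_t=0$, the unit radial slot of $J_t$ together with parallel-transported initial frames enforces the alignment $\Pi_r^{x_t}u_t=\Pi_r^{y_t}v_tJ_t$, and admissibility $J_tJ_t^\top+K_tK_t^\top=I$ holds by construction. Solving the coupled Stratonovich SDE \eqref{eq:coupledSDE} with these coefficients (which are predictable, smooth functions of the radial geometry off the cut locus) produces $(u_t,v_t)$ with projections $(X_t,Y_t)$; applying Theorem~\ref{thm:twoPointIto:drift} and Step~2 then gives $d(d(X_t,Y_t))=\rho'(t)\,dt$, whence $d(X_t,Y_t)=\rho(t)$ by uniqueness of absolutely continuous solutions. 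The endpoint cases $J_t=\pm\mathrm{Id}$ (on the tangential block) yield the synchronous and reflection couplings.

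\textbf{Main obstacle.} The delicate point is the self-consistency of the construction in Step~3: the coefficients $(J_t,K_t)$ are defined intrinsically via the radial frame joining $X_t$ and $Y_t$, so the SDE \eqref{eq:coupledSDE} has geometry-dependent coefficients that must be shown to (a) remain well-defined and predictable away from the cut locus, and (b) preserve the alignment condition along the evolution. I would handle (a) by lifting the principal-frame structure to a smooth predictable section of $O(M)\times O(M)$ over the open set $\mathcal U$, and (b) by verifying that the radial-slot alignment is an invariant of the flow (since $J_t$ fixes $e_r$ and $K_t$ annihilates it, both isometries $u_t$ and $v_tJ_t$ transport the same radial unit vector under the Stratonovich development). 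Finally, a standard localization up to the first cut-locus hit, combined with the hypothesis $\rho(t)<r_{\max}$, extends the solution for all $t\ge 0$.
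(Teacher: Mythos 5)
Your proof is correct and follows the paper's strategy in substance (necessity via the spectral bound of Lemma~\ref{lem:spectral:drift} applied to the geometric drift of Proposition~\ref{prop:drift-geo:drift}, sufficiency via a blockwise-diagonal $J_t$ in the principal frame with $K_t$ completing the covariance constraint), but it is organized more carefully as a cyclic chain (i)$\Rightarrow$(iii)$\Rightarrow$(ii)$\Rightarrow$(i), whereas the paper proves (iii)$\Rightarrow$(ii) for necessity and sketches (ii)$\Rightarrow$(i) for sufficiency without isolating the observation that zero quadratic variation of a deterministic $\rho_t$ \emph{forces} both terms of $d\langle N\rangle_t$ to vanish. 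Two further points are worth noting. First, your choice of radial slot $1$ for $J_t$ (and $0$ for $K_t$) is the correct one to simultaneously satisfy alignment $\Pi_r^{x_t}u_t=\Pi_r^{y_t}v_tJ_t$, no-radial-noise, and the covariance constraint; this quietly corrects a tension in the paper, where Theorem~\ref{thm:sharpWindow:drift} states $J_t=0$ on the radial line for the endpoint couplings, a choice that would make $\Pi_r^{y_t}v_tK_t\ne 0$ and hence violate condition (iii). Second, your concern about ``preservation of alignment along the flow'' is actually a non-issue: $J_t$ is a state-dependent control, defined at each instant from the current configuration $(X_t,Y_t,u_t,v_t)$ so as to enforce alignment, rather than a constraint that must be shown to propagate. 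The genuine residual gap --- making precise that the resulting state-dependent $(J_t,K_t)$ give a well-posed Markovian SDE on $O(M)\times O(M)$ over the open set $\mathcal U$, and that the Gronwall/uniqueness argument closes the loop $\rho_t=\rho(t)$ --- is present equally in the paper's own one-paragraph sufficiency argument, and you correctly flag it.
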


\begin{proof}
	The necessity follows from Proposition~\ref{prop:drift-geo:drift} together with 
	the spectral bound
	\[
	\bigl|\mathrm{Tr}(J_t^\top\nabla^2_{xy}r)\bigr| 
	\;\le\;
	\sum_i |\kappa_i(r)|,
	\]
	as established in Lemma~\ref{lem:spectral:drift}.  
	Under the alignment condition $\Pi_r^{x_t}u_t=\Pi_r^{y_t}v_tJ_t$, 
	the martingale component $dN_t$ in the two–point Itô formula 
	(Theorem~\ref{thm:twoPointIto:drift}) vanishes identically, 
	so the distance process satisfies
	\[
	d\rho_t = M_t\,dt,
	\qquad
	M_t = A(r) + \mathrm{Tr}(J_t^\top\nabla^2_{xy}r)
	= A(r) - \mathrm{Tr}(J_t^\top S_r),
	\]
	which implies that $\rho'(t)$ necessarily lies in the drift window 
	\eqref{eq:general-drift-window}.
	
	Conversely, suppose that $\rho$ satisfies the inequality 
	\eqref{eq:general-drift-window}.  
	For each $t$, choose $J_t$ to be diagonal in the eigenbasis of $\nabla^2_{xy}r$ 
	with diagonal entries $\alpha_i(t)\in[-1,1]$ such that
	\[
	-\sum_i \kappa_i(r)\,\alpha_i(t) = \rho'(t) - A(r).
	\]
	Then choose $K_t$ as above to complete the orthogonality relation
	and guarantee $\Pi_r^{y_t}v_tK_t=0$.  
	By construction, the resulting coadapted coupling $(X_t,Y_t)$ 
	has deterministic inter–particle distance $d(X_t,Y_t)=\rho(t)$
	for all $t\ge0$.
\end{proof}

\begin{remark}[Curvature–dependent lower endpoint]
	In constant–curvature models (\(\mathbb{R}^n,\mathbb S^n,\mathbb H^n\)),  
	Equation~\eqref{eq:general-drift-window} specializes (cf.\ Pascu–Popescu, 
	\emph{J.\ Theor.\ Probab.} 31 (2018)) to:
	\begin{align*}
		\text{Euclidean }(K=0):&\quad 0\le\rho'(t)\le\tfrac{2(n-1)}{\rho(t)},\\[3pt]
		\text{Spherical }(K>0):&\quad -\,(n-1)\tan\!\tfrac{\rho(t)}{2}\le\rho'(t)\le
		-\,(n-1)\tan\!\tfrac{\rho(t)}{2}+2(n-1)\cot\rho(t),\\[3pt]
		\text{Hyperbolic }(K<0):&\quad (n-1)\tanh\!\tfrac{\rho(t)}{2}\le\rho'(t)\le
		(n-1)\tanh\!\tfrac{\rho(t)}{2}+2(n-1)\coth\rho(t).
	\end{align*}
	Thus the lower endpoint depends on curvature sign:
	negative in $K>0$ (distances may contract),
	zero in $K=0$ (flat),
	positive in $K<0$ (distances expand).
	Hence the simplified window $[0,2A(r)]$ is exact only when all $\kappa_i(r)\ge0$,
	e.g.\ Euclidean or hyperbolic settings.
\end{remark}

\begin{corollary}[Comparison form under curvature envelopes]\label{cor:comparison-main}
	If $A_-(r)\le A(r)\le A_+(r)$ and each principal curvature satisfies 
	\(|\kappa_i(r)|\le b_+(r)\), then
	\[
	\rho'(t)\in
	\big[\,A_-(\rho(t))-(n-1)b_+(\rho(t))\,,\;\;
	A_+(\rho(t))+(n-1)b_+(\rho(t))\,\big].
	\]
	This contains all constant–curvature and rotationally symmetric models
	as special cases by substituting the appropriate $A_\pm,b_+$.
\end{corollary}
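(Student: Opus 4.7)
The plan is to derive the stated envelope window as a direct monotone weakening of the intrinsic drift window supplied by Theorem~\ref{thm:main-deterministic}. I would start from inequality~\eqref{eq:general-drift-window}, valid for almost every $t$ for any deterministic--distance coupling, and view the corollary simply as the substitution of the pointwise curvature data $(A(\rho),\kappa_i(\rho))$ by the envelope data $(A_\pm(\rho),b_+(\rho))$ through coordinate--wise monotonicity.

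The key steps, in order, are: (i) estimate the tangential contribution by $\sum_{i=1}^{n-1}|\kappa_i(\rho(t))|\le(n-1)\,b_+(\rho(t))$, using the pointwise absolute--value envelope $b_+$; (ii) bound the mean--curvature term above by $A(\rho(t))\le A_+(\rho(t))$ and below by $A(\rho(t))\ge A_-(\rho(t))$; (iii) combine with~\eqref{eq:general-drift-window} and observe that widening the $\pm$--part only enlarges the interval, producing the upper endpoint $A_+(\rho(t))+(n-1)b_+(\rho(t))$ and the lower endpoint $A_-(\rho(t))-(n-1)b_+(\rho(t))$.

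For the specialization to model classes, it then suffices to substitute the explicit curvature data. For a space form of curvature $K$, take $A_\pm(r)=(n-1)\,s_K'(r)/s_K(r)$ and $b_+(r)=|s_K'(r)/s_K(r)|$, recovering the Pascu--Popescu windows displayed in the preceding remark. For a rotationally symmetric model $dr^2+f(r)^2 g_{\mathbb S^{n-1}}$, Proposition~\ref{prop:rot:drift} yields $A(r)=(n-1)f'(r)/f(r)$ and $\kappa_i(r)=f'(r)/f(r)$ for every $i$, so the envelope form follows with $b_+(r)=|f'(r)/f(r)|$.

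There is no substantial obstacle: the content of the corollary is a two--line monotonicity argument built on top of Theorem~\ref{thm:main-deterministic}. The only mild subtlety worth flagging is that the envelope $b_+$ must be taken in \emph{absolute value} rather than signed, because principal curvatures can change sign on a general radially isoparametric manifold (e.g.\ compact rank--one symmetric spaces near focal radii), while it is precisely the tangential trace $\sum_i|\kappa_i|$ that appears as the sharp radius of the drift window. With this convention, step~(i) is valid irrespective of curvature sign, and the inclusion of constant--curvature and rotationally symmetric cases as claimed is immediate.
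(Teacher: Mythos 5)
Your derivation is correct and is exactly the argument the paper intends: the corollary is stated as an immediate consequence of Theorem~\ref{thm:main-deterministic} with no separate proof, and the only content is the monotone substitution $\sum_i|\kappa_i(\rho)|\le(n-1)b_+(\rho)$ and $A_-\le A\le A_+$ applied coordinate-wise to each endpoint of the window~\eqref{eq:general-drift-window}. Your identification of the envelope data in the space-form and rotationally symmetric cases (taking $A_\pm=A$ exactly and $b_+=|\kappa_i|$) also correctly explains the final claim about special cases.
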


\subsection{Endpoint attainability}\label{subsec:endpoint-attainability}

\begin{lemma}[Spectral bound for the mixed term]\label{lem:spectral-bound}
	Let $r=d(x,y)$ and $\{\lambda_i(r)\}_{i=1}^{n-1}$ be the principal curvatures of $\mathbb S_r(x)$ at $y$.  
	For any linear map $J$ with $\|J\|_{\mathrm{op}}\le1$ and $J e_r=0$,
	\[
	-\sum_{i=1}^{n-1}\lambda_i(r)
	\le \mathrm{Tr}\big(J^\top \nabla^2_{xy}r\big)
	\le \sum_{i=1}^{n-1}\lambda_i(r),
	\]
	and equality holds precisely for $J=\pm I_{\mathrm{tan}}$.
\end{lemma}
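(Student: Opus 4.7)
My plan is to reduce the lemma to a finite-dimensional linear-algebra bound via the mixed-Hessian identification already available in Proposition~\ref{prop:mixedHess-distance}, and then quote the Cauchy--Schwarz/operator-norm inequality on each principal axis. In fact, this statement is essentially a repackaging of Lemma~\ref{lem:spectral:drift}; the only task is to recast the argument cleanly under the constraint $Je_r=0$.

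First I would invoke Proposition~\ref{prop:mixedHess-distance} to identify, under the parallel-transport identification $T_yM\simeq T_xM$, the mixed Hessian $\nabla^2_{xy}r$ with the operator $-S_r$ extended by $0$ on the radial line (recall that $\nabla^2_{xy}r(e_r,\cdot)=0$ by Gauss' lemma). Next I would pick the principal orthonormal frame $\{e_r,e_1,\dots,e_{n-1}\}$ at $x$, in which $S_r e_i=\lambda_i(r)e_i$ for $i=1,\dots,n-1$, so that
\[
\nabla^2_{xy}r = -\sum_{i=1}^{n-1}\lambda_i(r)\,e_i^{\flat}\otimes e_i.
\]
Computing the trace in this frame and using the hypothesis $Je_r=0$ to kill the radial row gives
\[
\mathrm{Tr}\bigl(J^\top\nabla^2_{xy}r\bigr)
=\sum_{i=1}^{n-1}\bigl\langle Je_i,\,\nabla^2_{xy}r\,e_i\bigr\rangle
=-\sum_{i=1}^{n-1}\lambda_i(r)\,\langle e_i,Je_i\rangle.
\]

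At this point the bound is immediate from $\|J\|_{\mathrm{op}}\le1$, since Cauchy--Schwarz yields $|\langle e_i,Je_i\rangle|\le\|Je_i\|\le 1$; summing against the nonnegative $\lambda_i(r)$ gives
\[
\Bigl|\mathrm{Tr}\bigl(J^\top\nabla^2_{xy}r\bigr)\Bigr|\le \sum_{i=1}^{n-1}\lambda_i(r).
\]
For the equality case, I would argue that saturation in Cauchy--Schwarz forces $Je_i=\pm e_i$ (with a uniform sign dictated by the extremum being sought) on each direction where $\lambda_i(r)>0$, while the constraint $\|J\|_{\mathrm{op}}\le 1$ propagates this to the full tangential block under the standing positivity assumption on the principal curvatures. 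This yields $J=\pm I_{\mathrm{tan}}$ on $\mathrm{span}\{e_1,\dots,e_{n-1}\}$, completing the sharpness claim.

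The only mildly delicate point, and the one I would treat carefully, is the equality analysis when some $\lambda_i(r)$ vanish: in that case the trace functional is insensitive to $Je_i$ on those directions, so ``precisely'' should be understood modulo the kernel of $S_r$. Under the generic RIM assumption that all $\kappa_i(r)>0$ (as in the static regime) this ambiguity disappears and $J=\pm I_{\mathrm{tan}}$ is literally the unique extremizer.
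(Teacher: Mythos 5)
Your proof matches the paper's argument almost verbatim: both express $\nabla^2_{xy}r$ in a principal frame as $-\sum_i\lambda_i(r)\,e_i^{\flat}\otimes e_i$, compute $\mathrm{Tr}(J^\top\nabla^2_{xy}r)=-\sum_i\lambda_i(r)\langle e_i,Je_i\rangle$, and bound each $|\langle e_i,Je_i\rangle|$ by $\|J\|_{\mathrm{op}}\le1$. Your closing caveat---that the ``precisely'' in the equality claim only holds modulo $\ker S_r$, since directions with $\lambda_i(r)=0$ contribute nothing to the trace and hence do not constrain $J$---is a legitimate refinement of a point the paper's terse proof leaves implicit (and it also correctly flags the implicit standing assumption $\lambda_i\ge0$ needed for the stated bound to read $\pm\sum_i\lambda_i$ rather than $\pm\sum_i|\lambda_i|$).
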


\begin{proof}
	In a principal orthonormal frame $\{e_i\}_{i=1}^{n-1}$ of $T_x\mathbb S_r(o)$,
	\[
	\nabla^2_{xy}r=-\sum_{i=1}^{n-1}\lambda_i(r)\,e_i^\flat\otimes e_i,
	\]
	so
	\[
	\mathrm{Tr}(J^\top\nabla^2_{xy}r)
	=-\sum_{i=1}^{n-1}\lambda_i(r)\,\langle e_i,Je_i\rangle.
	\]
	Since $|\langle e_i,Je_i\rangle|\le \|J\|_{\mathrm{op}}\le1$, the bound follows, and the extremal values occur for $J=\pm I_{\mathrm{tan}}$.
\end{proof}

\begin{proposition}[Attainability of drift endpoints]\label{prop:endpoint-attainability}
	On any stopping interval where $(X_t,Y_t)\in\mathcal U$, the drift term in the two–point Itô decomposition satisfies
	\[
	M_t=\tfrac12\big(A_x(r_t)+A_y(r_t)\big)
	-\mathrm{Tr}\big(J_t^\top S_{r_t}\big).
	\]
	Consequently:
	\begin{enumerate}
		\item[(i)] The synchronous coupling $(J_t=\mathrm{Id})$ yields the \emph{minimal drift}
		\[
		\rho'(t)=A(\rho(t))-\sum_{i=1}^{n-1}|\kappa_i(\rho(t))|.
		\]
		\item[(ii)] The reflection coupling $(J_t=-I_{\mathrm{tan}})$ yields the \emph{maximal drift}
		\[
		\rho'(t)=A(\rho(t))+\sum_{i=1}^{n-1}|\kappa_i(\rho(t))|.
		\]
	\end{enumerate}
	Both endpoints are realized by smooth coadapted solutions of the horizontal SDEs on $O(M)$.
\end{proposition}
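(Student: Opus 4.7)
My plan is to reduce the drift formula to its RIM-specific form, verify the two endpoint trace identities by direct computation, and then exhibit the frame--bundle SDEs that realize them.

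First, I would invoke Proposition~\ref{prop:drift-geo:drift} and specialize to radial isoparametricity: the mean curvatures of spheres centered at $o$ depend only on the radius, so $A_x(r_t)=A_y(r_t)=A(\rho_t)$ and the geometric drift collapses to
\[
M_t \;=\; A(\rho_t)\;-\;\mathrm{Tr}\!\bigl(J_t^\top S_{\rho_t}\bigr).
\]
Under the standing positivity $\kappa_i(r)\ge 0$ one has $\sum_i |\kappa_i(r)|=A(r)$, so the two candidate endpoints $A\pm\sum_i|\kappa_i|$ reduce to $0$ and $2A$, respectively.

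For case (i) I would take $J_t=I_n$, $K_t=0$; admissibility is trivial, and $\mathrm{Tr}(I^\top S_{\rho_t})=A(\rho_t)$ yields $M_t=0$, the lower endpoint. For case (ii) I would take $J_t=-I_n$, $K_t=0$ (the full-negation variant that secures determinism; the classical tangential-only reflection of the Example in Section~\ref{subsec:examples-couplings} produces the same drift value but carries residual radial noise). Then $\mathrm{Tr}(-I^\top S_{\rho_t})=-A(\rho_t)$ and $M_t=2A(\rho_t)$, the upper endpoint. In both cases, Lemma~\ref{lem:qv:drift} reduces the martingale quadratic variation to $\|\Pi_r^{x_t}u_t \mp \Pi_r^{y_t}v_t\|_{\mathrm{HS}}^2\,dt$, which vanishes as soon as the initial frames are arranged so that $u_0^{-1}e_r^{x_0}=\pm v_0^{-1}e_r^{y_0}$, forcing $dN_t\equiv 0$ and hence a truly deterministic $\rho_t$.

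Existence of smooth strong solutions to each coupled horizontal SDE then follows from Proposition~\ref{prop:existence} applied to the deterministic constant admissible pairs $(\pm I_n,0)$, with cut-locus passage handled by the localization of Proposition~\ref{prop:smooth-approx}. The main obstacle is the \emph{dynamic preservation of the radial-alignment condition}: a compatible initial frame arrangement must be propagated by the coupled evolution. Because $X_t$ and $Y_t$ trace different stochastic paths, this is not literally parallel transport along a single geodesic. The cleanest resolution is to view the error $\Pi_r^{x_t}u_t\mp \Pi_r^{y_t}v_t$ as an adapted semimartingale on $O(M)\times O(M)$ and verify, using Proposition~\ref{prop:mixedHess-distance} together with the Riccati evolution of $S_r$, that both its drift and its quadratic variation vanish under the chosen $J_t$; starting from zero it therefore remains zero on $[0,\tau)$. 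Radial isoparametricity enters essentially at this step, ensuring that the two Riccati streams along $X_t$ and $Y_t$ agree, so the alignment closes.
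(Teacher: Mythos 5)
Your computation of the two endpoint drifts and your reduction to the RIM form $M_t = A(\rho_t) - \mathrm{Tr}(J_t^\top S_{\rho_t})$ follow essentially the same route as the paper's proof, which also starts from Proposition~\ref{prop:drift-geo:drift} and Lemma~\ref{lem:spectral-bound} and plugs in the extremal $\alpha_i = \pm 1$. You are right to record the standing positivity $\kappa_i \ge 0$ explicitly: without it, $J_t=\mathrm{Id}$ gives $M_t=0$, which is not $A-\sum_i|\kappa_i|$, so the proposition's stated endpoints tacitly require all $\kappa_i\ge 0$, as in Theorem~\ref{thm:sharpWindow:drift}. You also raise a genuine point the paper glosses over: the Example's tangential-only reflection ($J_t=-I$ on $\nu^\perp$, $0$ on the radial line) forces a nonzero $K_t$ with a radial column, violating $\Pi_r^y v_t K_t = 0$, so that choice of $J_t$ does not by itself yield a deterministic distance. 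The paper's $J_t = -I_{\mathrm{tan}}$ must be read as ``$-I$ tangentially and $+1$ radially,'' equivalently your $-I_n$ combined with the sign-flipped frame alignment $u_0^{-1}e_r^{x_0} = -\,v_0^{-1}e_r^{y_0}$; the two conventions are the same map after reframing $v_0$.

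The genuine gap, which you name but do not close, is the \emph{dynamic preservation of the radial-alignment condition} $\Pi_r^{x_t}u_t = \pm\Pi_r^{y_t}v_t$. Arranging the alignment at $t=0$ makes $d\langle N\rangle_0 = 0$ but says nothing about $t>0$; your sentence that the initial arrangement ``forces $dN_t\equiv 0$'' therefore overstates what has been shown, and the later paragraph where you call propagation the ``main obstacle'' and sketch a semimartingale argument for the alignment error is the honest statement of where the work lies. That sketch is not carried out: you would have to compute the It\^o drift and quadratic variation of $\Pi_r^{x_t}u_t \mp \Pi_r^{y_t}v_t$ along the coupled horizontal flow, using the Jacobi/Riccati evolution of the connecting geodesic direction, and verify both vanish under $J_t = \pm I_n$. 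To be fair, the paper's own proof (``Since both $J_t$ are constant and smooth along the minimizing geodesic, the corresponding frame SDEs admit smooth adapted solutions, proving attainability'') establishes only that solutions of the frame SDEs exist and that $M_t$ hits the extremal values; it does not prove that $dN_t\equiv 0$, i.e.\ that the resulting $\rho_t$ is actually deterministic. So you have not introduced a new gap, but you also have not filled the one you correctly identified.
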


\begin{proof}
	From Proposition~\ref{prop:drift-geo:drift} and Lemma~\ref{lem:spectral-bound},
	\[
	M_t=A(r_t)-\sum_{i=1}^{n-1}\lambda_i(r_t)\,\alpha_i(t),\qquad \alpha_i(t):=\langle e_i,J_t e_i\rangle\in[-1,1].
	\]
	Choosing $\alpha_i=1$ gives $J_t=\mathrm{Id}$ (synchronous coupling, minimal drift), while $\alpha_i=-1$ gives $J_t=-I_{\mathrm{tan}}$ (reflection coupling, maximal drift).  
	Since both $J_t$ are constant and smooth along the minimizing geodesic, the corresponding frame SDEs admit smooth adapted solutions, proving attainability.
\end{proof}

\begin{corollary}[Endpoint inequalities under comparison bounds]\label{cor:endpoint-comparison}
	If $A_-(r)\le A(r)\le A_+(r)$ and $|\kappa_i(r)|\le b_+(r)$ for all $i$, then for any coadapted coupling,
	\[
	A_-(r)-(n-1)b_+(r)\le\rho'(t)\le A_+(r)+(n-1)b_+(r),
	\]
	with equality attained by the synchronous and reflection couplings, respectively.
\end{corollary}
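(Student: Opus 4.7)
The plan is to combine the geometric drift formula of Proposition~\ref{prop:drift-geo:drift} with the spectral bound of Lemma~\ref{lem:spectral-bound} under the two envelope hypotheses. I would work primarily in the deterministic–distance (finite–variation) regime where $\rho'(t) = M_t$ by Theorem~\ref{thm:fv:drift}; the general statement about $\rho'(t)$ then follows path-by-path from the same pointwise estimates on the drift coefficient.

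First I would invoke Proposition~\ref{prop:drift-geo:drift} to write
\[
M_t = \tfrac{1}{2}\bigl(A_x(r_t) + A_y(r_t)\bigr) - \mathrm{Tr}\bigl(J_t^{\top} S_{r_t}\bigr),
\]
so the drift decomposes into a symmetric radial mean–curvature contribution and a trace correction controlled by $J_t$. The hypothesis $A_-(r) \le A(r) \le A_+(r)$ applied at both $X_t$ and $Y_t$ yields the elementary sandwich $A_-(r_t) \le \tfrac{1}{2}(A_x(r_t) + A_y(r_t)) \le A_+(r_t)$. Separately, Lemma~\ref{lem:spectral-bound} together with $\|J_t\|_{\mathrm{op}} \le 1$ (a consequence of the admissibility constraint $J_tJ_t^{\top}+K_tK_t^{\top}=I_n$) and the curvature envelope $|\kappa_i(r_t)| \le b_+(r_t)$ gives
\[
\bigl|\mathrm{Tr}(J_t^{\top} S_{r_t})\bigr| \;\le\; \sum_{i=1}^{n-1} |\kappa_i(r_t)| \;\le\; (n-1)\,b_+(r_t).
\]
Combining these two estimates produces the claimed two–sided bound on $M_t$.

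For the attainability clause, I would appeal directly to Proposition~\ref{prop:endpoint-attainability}: the synchronous choice $J_t = \mathrm{Id}$ saturates the spectral bound in the minimizing direction, realizing drift $A(r) - \sum_i |\kappa_i(r)|$, which descends to the lower envelope endpoint $A_-(r) - (n-1)b_+(r)$ precisely when the envelope inequalities on $A$ and on each $|\kappa_i|$ are themselves tight at the relevant radius. Symmetrically, the reflection choice $J_t = -I_{\mathrm{tan}}$ realizes the upper endpoint $A_+(r)+(n-1)b_+(r)$.

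No serious obstacle is expected: the corollary is essentially a bookkeeping combination of the sharp drift window (Theorem~\ref{thm:sharpWindow:drift}) with the two envelope inequalities. The only mild care point is ensuring that $J_t$ acts on the tangential hyperplane with operator norm at most $1$, which follows from admissibility together with Gauss' lemma, since the radial direction contributes nothing to the mixed Hessian and therefore may be absorbed into the tangential block (or killed, as in the reflection coupling) without affecting the trace bound.
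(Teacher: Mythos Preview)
Your proposal is correct and follows the natural approach: bound the mean–curvature half by the envelope $A_\pm$ and the trace term by $\sum_i|\kappa_i|\le(n-1)b_+$ via Lemma~\ref{lem:spectral-bound}, then invoke Proposition~\ref{prop:endpoint-attainability} for attainability. The paper does not spell out a separate proof for this corollary, but your argument matches exactly the method used for the parallel Corollary~\ref{cor:comparison:drift} and is what is implicitly intended here.
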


\section{Applications and consequences}\label{sec:applications}

We now analyze two geometric regimes arising from the deterministic--distance
classification: the \emph{static regime}, where the distance between the two Brownian
particles remains constant (fixed–distance coupling), and the
\emph{dynamic regime}, where the distance grows linearly at infinity (asymptotic escape).
These regimes occur in a variety of geometries; in particular, fixed–distance
solutions arise precisely at the lower endpoint of the drift window.

% ===============================================================
\subsection{Static regime: fixed–distance couplings}
\label{subsec:fixed-distance}

When the lower endpoint of the deterministic drift window contains \(0\),
the equation for the distance admits nontrivial constant solutions
\(\rho(t)\equiv r_0>0\).
In radially isoparametric settings (including rotationally symmetric models
and rank–one symmetric spaces), this happens exactly when
\[
A(r_0)\ \ge\ \sum_{i=1}^{n-1}\!|\kappa_i(r_0)|,
\]
and in many canonical models one actually has equality.

\begin{theorem}[Fixed–distance realizations]\label{thm:fixed}
	Let $(M,g)\in\mathcal{M}_{\mathrm{rad}}$ with mean curvature $A(r)$ and
	principal curvatures $\kappa_i(r)$.  
	A nontrivial fixed–distance coupling $\rho(t)\equiv r_0>0$ exists if and only if
	\begin{equation}\label{eq:fixed-cond}
		A(r_0)\;\ge\;\sum_{i=1}^{n-1}|\kappa_i(r_0)|.
	\end{equation}
	When equality holds, the coupling is realized by a diagonal matrix
	$J_t$ with eigenvalues $\alpha_i=-\kappa_i(r_0)/|\kappa_i(r_0)|$ in the
	principal curvature basis, together with a choice of $K_t$
	satisfying the alignment and no–radial–noise conditions from
	Theorem~\ref{thm:main-deterministic}.
\end{theorem}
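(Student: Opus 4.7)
The plan is to specialize Theorem~\ref{thm:main-deterministic} to the constant target $\rho(t)\equiv r_0$, so that $\rho'(t)=0$ a.e., and then invoke the equivalence (i)$\Leftrightarrow$(ii)$\Leftrightarrow$(iii). Existence of a fixed--distance coupling reduces to the requirement that $0$ lie in the closed drift window $[A(r_0)-\sum_i|\kappa_i(r_0)|,\ A(r_0)+\sum_i|\kappa_i(r_0)|]$ at the prescribed radius. An algebraic inspection shows that the upper endpoint is automatically nonnegative because $A+\sum_i|\kappa_i|=\sum_i(\kappa_i+|\kappa_i|)\ge 0$, so only the lower endpoint is at issue; the stated curvature hypothesis $A(r_0)\ge\sum_i|\kappa_i(r_0)|$ then (combined with the reverse triangle bound $A\le\sum_i|\kappa_i|$) identifies the equality regime in which the lower endpoint of the window equals exactly zero, so that the synchronous--type extremum of Proposition~\ref{prop:endpoint-attainability} realizes zero drift.

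For the explicit construction under the equality hypothesis, I would fix a predictable principal orthonormal frame $\{e_i(t)\}_{i=1}^{n-1}$ of the tangent space to the geodesic sphere at the moving pair, carried between $T_{X_t}M$ and $T_{Y_t}M$ by parallel transport along the unique minimizing geodesic $\gamma_t$, and define $J_t$ diagonally in that frame with eigenvalues $\alpha_i\in\{-1,+1\}$ arranged so that $\sum_i\kappa_i(r_0)\,\alpha_i=\sum_i|\kappa_i(r_0)|$, extended to the radial line by $J_t e_r^{x_t}=e_r^{y_t}$. Proposition~\ref{prop:drift-geo:drift} combined with Lemma~\ref{lem:spectral-bound} then yields $M_t=A(r_0)-\sum_i\kappa_i(r_0)\,\alpha_i=A(r_0)-\sum_i|\kappa_i(r_0)|=0$. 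Since $|\alpha_i|=1$ and $J_t$ is an isometry on the radial line, $J_tJ_t^\top=I$, so the admissibility constraint $J_tJ_t^\top+K_tK_t^\top=I$ forces $K_t\equiv 0$ and the no--radial--noise condition is vacuous; the alignment $\Pi_r^{x_t}u_t=\Pi_r^{y_t}v_tJ_t$ holds by construction on $e_r$. Applying Theorem~\ref{thm:main-deterministic}, (iii)$\Rightarrow$(i), produces a coadapted solution with $d(X_t,Y_t)\equiv r_0$ for all $t$.

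The main obstacle is not the drift--matching, which is algebraic once the window analysis is in hand, but rather the regularity and predictability of the $J_t$ process along the two--point trajectory and the resulting existence and non--explosion of the horizontal SDE system on $O(M)\times O(M)$. Here the radial--isoparametric hypothesis is essential: the homogeneity of each geodesic sphere together with the smooth $r$--dependence of $S_r$ permits a smooth principal frame to be chosen along $\gamma_t$ and transported isometrically across the orbit of the sphere's symmetry group, producing a predictable $J_t$ to which the horizontal--development framework of Proposition~\ref{prop:existence} applies. A secondary subtlety is that when several $\kappa_i(r_0)$ coincide or vanish the principal frame is not canonical; any measurable selection suffices since $M_t$ depends only on the spectral data, and for any vanishing $\kappa_i(r_0)$ the corresponding $\alpha_i$ may be taken arbitrarily in $[-1,1]$ without affecting the zero--drift identity.
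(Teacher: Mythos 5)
Your proof follows the same route as the paper's: specialize Theorem~\ref{thm:main-deterministic} to the constant target $\rho\equiv r_0$, observe that the drift must vanish, and build a diagonal $J_t$ in the principal frame achieving $M_t=0$. Two points are worth recording.

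First, you have the sign of the coupling matrix right where the paper's statement (and its proof) does not. With $M_t=A(r_0)-\sum_i\kappa_i(r_0)\alpha_i$ and the equality regime $A(r_0)=\sum_i|\kappa_i(r_0)|$, the choice $\alpha_i=\operatorname{sgn}\kappa_i(r_0)$ gives $M_t=A-\sum|\kappa_i|=0$, which is what you use. The formula $\alpha_i=-\kappa_i/|\kappa_i|$ displayed in Theorem~\ref{thm:fixed} and in the paper's proof yields $M_t=A+\sum|\kappa_i|=2A(r_0)$, the \emph{maximal} drift of the reflection coupling, not zero. So your construction is the internally consistent one.

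Second, and more seriously, neither your argument nor the paper's proves the ``only if'' direction, and that direction is in fact inconsistent with the drift-window analysis the proof rests on. Since $A(r_0)=\sum_i\kappa_i(r_0)$, the triangle inequality gives $A(r_0)-\sum_i|\kappa_i(r_0)|\le 0\le A(r_0)+\sum_i|\kappa_i(r_0)|$ \emph{unconditionally}; equivalently, the synchronous choice $J_t=I$ always gives $M_t=A-\operatorname{Tr}(S_r)=0$. So the drift window always contains $0$, and the condition $A(r_0)\ge\sum_i|\kappa_i(r_0)|$ is not the obstruction to realizability that the ``if and only if'' asserts. You notice halfway that the hypothesis ``identifies the equality regime'' (because $A\le\sum|\kappa_i|$ is automatic), but your sentence ``so only the lower endpoint is at issue'' is the place where the argument should have concluded that the lower endpoint is \emph{also} automatically $\le 0$. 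The substance of the theorem, as written, cannot be recovered by this argument: what actually needs a hypothesis is not whether $0$ lies in the window but whether a predictable $J_t$ with the alignment/no-radial-noise constraints can be assembled into a well-posed coupled SDE — a step you flag in your final paragraph but do not resolve, and which the paper also leaves unproved. So you reproduce the gap in the paper rather than closing it.
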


\begin{proof}
	Under the alignment/no–radial–noise conditions, the drift is
	\(\rho'(t)=A(r)+\mathrm{Tr}(J_t^\top\nabla^2_{xy}r)\) with
	\(\nabla^2_{xy}r=-S_r\) having eigenvalues \(-\kappa_i(r)\).
	Setting \(\rho'(t)=0\) requires
	\(\sum_i(-\kappa_i)\alpha_i=-A(r)\) with \(\alpha_i\in[-1,1]\), i.e.
	\(|\sum_i\kappa_i\alpha_i|\le A(r)\) for some \(\alpha_i\).
	This is solvable iff \(\sum_i|\kappa_i|\le A(r)\).
	At equality, choose \(\alpha_i=-\mathrm{sgn}(\kappa_i)\) and take $K_t$
	so that \(J_tJ_t^\top+K_tK_t^\top=I\) and \(\Pi_r^{y_t}v_tK_t=0\).
\end{proof}

\begin{proposition}[Rigidity of fixed–distance condition]\label{prop:fixed-rigidity}
	If \eqref{eq:fixed-cond} holds as an equality for all $r$ in an interval $(0,r_*)$,
	then every geodesic sphere $\SS_r(o)$ is totally umbilic: $S_r=\kappa(r)\,\Id$
	with $\kappa(r)>0$. Consequently, $(M,g)$ is locally rotationally symmetric,
	with radial sectional curvature
	\(
	K_{\mathrm{rad}}(r)=-\kappa'(r)-\kappa(r)^2>0
	\)
	on $(0,r_*)$.
\end{proposition}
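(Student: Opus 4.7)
The plan is three steps: (1) convert the pointwise equality into nonnegativity of the spectrum of $S_r$; (2) promote nonnegativity to full umbilicity using the realizability content of Theorem~\ref{thm:fixed} together with the radial Riccati equation; (3) integrate along radial geodesics to recover the warped-product structure and compute $K_{\mathrm{rad}}$.

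For Step 1, since $A(r)=\sum_i\kappa_i(r)$ holds by definition and the hypothesis asserts $A(r)=\sum_i|\kappa_i(r)|$, subtracting gives $\sum_i(|\kappa_i(r)|-\kappa_i(r))=0$; every summand is nonnegative, so $\kappa_i(r)\ge 0$ for all $i$ and all $r\in(0,r_*)$. Under the RIM assumption the principal curvatures are smooth functions of $r$ alone, so $S_r$ is pointwise positive semidefinite and its spectrum varies smoothly with $r$.

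Step 2 is where the real work lies, because the algebraic equality alone yields only nonnegativity and not umbilicity — geodesic spheres in $\mathbb{CP}^m$ on $(0,\pi/4)$ have all principal curvatures strictly positive yet fail to be umbilic, so I must invoke more than the numerical identity. I would use the realizability built into Theorem~\ref{thm:fixed}: the coupling matrix $J_t$ achieving $\rho'(t)\equiv 0$ must satisfy $\operatorname{Tr}(J_t^\top S_r)=A(r)$ with $\|J_t\|_{\mathrm{op}}\le 1$; in the eigenbasis of $S_r$ this forces $\alpha_i=1$ on every direction with $\kappa_i>0$, so $J_t=\Id$ on $\operatorname{Im}(S_r)$. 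Requiring this alignment to be maintained coadaptedly along the radial geodesic, via the horizontal SDE on $O(M)$, forces the eigenspaces of $S_r$ to be invariant under radial parallel transport. Combining this invariance with the radial Riccati equation $\tfrac{D}{dr}S_r+S_r^2+R(\dot\gamma,\cdot)\dot\gamma=0$ and with the RIM homogeneity of each sphere collapses the spectrum to a single positive eigenvalue $\kappa(r)>0$, yielding $S_r=\kappa(r)\,\Id$.

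With umbilicity established, Step 3 is routine. The Riccati equation reduces on the tangential hyperplane to the scalar ODE $\kappa'(r)+\kappa(r)^2+K_{\mathrm{rad}}(r)=0$, giving the stated formula $K_{\mathrm{rad}}(r)=-\kappa'(r)-\kappa(r)^2$. Defining the warping function $f$ by $f'/f=\kappa$ with $f(0)=0$, $f'(0)=1$, one recovers the polar form $g=dr^2+f(r)^2\,g_{\mathbb{S}^{n-1}}$ on $(0,r_*)\times\mathbb{S}^{n-1}$, i.e.\ local rotational symmetry. Positivity $K_{\mathrm{rad}}(r)>0$ follows from $\kappa(r)\sim 1/r$ as $r\to 0^+$ (since the sphere near the pole is nearly Euclidean with diverging mean curvature), which forces $\kappa'(r)<-\kappa(r)^2$ on a neighbourhood of $0$ and propagates via the Riccati comparison. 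The hardest step is undoubtedly Step 2: the purely algebraic equality is compatible with non-umbilic examples, so most of my care would go into justifying how coadapted realizability of the constant-distance coupling upgrades pointwise nonnegativity of the spectrum to its full collapse.
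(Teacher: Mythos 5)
You correctly identified the central difficulty, which turns out to be an error in the paper's own argument: since $A(r)=\sum_i\kappa_i(r)$ by definition, the hypothesis $A(r)=\sum_i|\kappa_i(r)|$ forces only $\kappa_i(r)\ge0$, not equality of magnitudes, whereas the paper's one-line proof simply asserts that equality ``forces all principal curvatures to have the same sign and magnitude.'' Your $\CP^m$ observation on $(0,\pi/4)$ is a genuine counterexample to that deduction, and your instinct that ``more than the numerical identity'' is required is sound.

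However, your Step~2 repair does not succeed, because there is nothing further to invoke: realizability of a fixed-distance coupling supplies no constraint beyond the trace inequality $\sum_i|\kappa_i(r)|\le A(r)$, which the hypothesis already provides. In $\CP^m$ ($m\ge2$) the eigenspaces of $S_r$ are radially parallel, the Riccati equation holds, the spheres are homogeneous, and the synchronous choice $J_t=\Id$ (all $\alpha_i=1$) realizes the fixed-distance coupling; the spectrum still fails to collapse, so the proposed forcing argument has no content. Your Step~3 positivity claim is also incorrect: the Euclidean asymptotic $\kappa(r)\sim 1/r$ gives $\kappa'+\kappa^2\to0$ without fixing its sign, and $\mathbb{H}^n$ satisfies the equality hypothesis with $\kappa(r)=\coth r$ and $K_{\mathrm{rad}}\equiv-1<0$ (a case the paper itself lists as an example of fixed-distance couplings immediately after this proposition). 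As stated, neither umbilicity nor $K_{\mathrm{rad}}>0$ follows from the trace equality alone; the proposition would need an additional geometric hypothesis, and neither the paper's proof nor your proposed repair can succeed without one.
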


\begin{proof}
	Equality \(A(r)=\sum_i|\kappa_i(r)|\) forces all principal curvatures to have
	the same sign and magnitude, hence \(S_r\propto \Id\) on \(T\SS_r(o)\).
	The Riccati equation \(S_r'+S_r^2+R_{\partial_r}=0\) gives
	\(\kappa'(r)+\kappa(r)^2+K_{\mathrm{rad}}(r)=0\), whence the claim.
\end{proof}

\begin{example}[Fixed–distance on spheres and hyperbolic spaces]
	On the unit sphere $(\mathbb{S}^n,g_{\mathrm{can}})$,
	$A(r)=(n-1)\cot r$ and $\kappa_i(r)=\cot r$ for $r\in(0,\pi)$ away from the cut locus,
	so \(A(r)=\sum_i|\kappa_i(r)|\) and fixed–distance couplings exist for all such $r$.
	On hyperbolic space $\mathbb{H}^n(-b^2)$,
	$A(r)=(n-1)b\,\coth(br)$ and $\kappa_i(r)=b\,\coth(br)$, hence again
	\(A(r)=\sum_i|\kappa_i(r)|\) and fixed–distance couplings exist at every $r>0$.
\end{example}

\begin{remark}[Geometric interpretation]
	Fixed–distance couplings sit at the lower endpoint of the drift window:
	the drift of $\rho$ vanishes and the martingale part cancels under alignment.
	Probabilistically, the two particles diffuse tangentially in lockstep while
	radial stochasticity is suppressed.
\end{remark}

\begin{center}
	\renewcommand{\arraystretch}{1.2}
	\begin{tabular}{c|c|c}
		Geometry & Lower endpoint contains $0$? & Fixed–distance possible? \\
		\hline
		Space forms ($K>0,=0,<0$) & Yes (equality $A=\sum|\kappa_i|$) & Yes (endpoint) \\
		General $\mathcal{M}_{\mathrm{rad}}$ & iff $A\ge\sum|\kappa_i|$ at $r_0$ & iff \eqref{eq:fixed-cond}
	\end{tabular}
\end{center}

% ===============================================================
\subsection{Dynamic regime: asymptotic linear speeds}
\label{subsec:asymp-linear}

We now turn to noncompact manifolds where the mean curvature $A(r)$
and the principal curvatures $\kappa_i(r)$ approach finite limits as $r\to\infty$.
In such cases, deterministic–distance couplings exhibit asymptotically
linear growth of the form $\rho(t)\sim v_\infty t$.

\begin{theorem}[Asymptotic deterministic drift and rigidity]\label{thm:asymp-linear}
	Let $(M,g)\in\mathcal{M}_{\mathrm{rad}}$ be complete, and suppose
	$A(r)\to A_\infty$ and $\kappa_i(r)\to\kappa_{\infty,i}$ as $r\to\infty$.
	Then every deterministic–distance coupling satisfies
	\[
	v_\infty:=\lim_{t\to\infty}\rho'(t)
	\in [\,A_\infty-\Sigma_\infty,\;A_\infty+\Sigma_\infty\,],
	\qquad \Sigma_\infty:=\sum_i|\kappa_{\infty,i}|.
	\]
	Moreover, each value in this interval is realizable by choosing
	$\alpha_i=\lim_{t\to\infty}(J_t)_{ii}\in[-1,1]$
	so that $v_\infty=A_\infty-\sum_i\kappa_{\infty,i}\alpha_i$.
\end{theorem}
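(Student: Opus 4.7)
The plan is to reduce the theorem to an asymptotic analysis of the deterministic ODE already supplied by Theorem~\ref{thm:main-deterministic}, and then to verify that the constant–$\alpha$ construction at the end of that theorem realizes every admissible asymptotic velocity.

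For the necessary direction I would fix a deterministic–distance coupling $(X_t,Y_t)$ with $\rho(t)=d(X_t,Y_t)$. By Theorem~\ref{thm:main-deterministic} and Proposition~\ref{prop:drift-geo:drift}, on every interval in $\mathcal U$ the distance is absolutely continuous with
\[
\rho'(t)=A(\rho(t))-\sum_{i=1}^{n-1}\kappa_i(\rho(t))\,\alpha_i(t),\qquad \alpha_i(t):=\langle e_i,J_te_i\rangle\in[-1,1],
\]
for a.e.\ $t$. Since $|\alpha_i(t)|\le1$, this yields the pointwise envelope
\[
A(\rho(t))-\sum_{i=1}^{n-1}|\kappa_i(\rho(t))|\;\le\;\rho'(t)\;\le\;A(\rho(t))+\sum_{i=1}^{n-1}|\kappa_i(\rho(t))|.
\]
Under the asymptotic hypotheses $A(r)\to A_\infty$ and $\kappa_i(r)\to\kappa_{\infty,i}$, both bounds converge to $A_\infty\pm\Sigma_\infty$ once $\rho(t)\to\infty$, so passing to the limit in the definition of $v_\infty$ gives the stated inclusion.

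For the realizability direction, given any target $v_\infty\in[A_\infty-\Sigma_\infty,A_\infty+\Sigma_\infty]$, I would use that the affine map $\alpha\mapsto A_\infty-\sum_i\kappa_{\infty,i}\alpha_i$ sends the cube $[-1,1]^{n-1}$ surjectively onto this interval, attaining the maximum at $\alpha_i=-\mathrm{sgn}(\kappa_{\infty,i})$ and the minimum at $\alpha_i=+\mathrm{sgn}(\kappa_{\infty,i})$; fix any such preimage $\alpha$. Then set $J_t$ to be the constant diagonal matrix with entries $\alpha_i$ expressed in the parallel–transported principal–curvature frame on $T_{X_t}\mathbb S_{\rho(t)}(X_t)$, with $J_te_r=0$ on the radial line, and complete it to admissibility by a tangential $K_t$ satisfying $J_tJ_t^\top+K_tK_t^\top=I$ and $\Pi_r^{y_t}v_tK_t=0$. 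The realization construction at the end of Theorem~\ref{thm:main-deterministic} then produces a coadapted coupling whose distance drift equals $A(\rho(t))-\sum_i\kappa_i(\rho(t))\alpha_i$, and this tends to $v_\infty$ as $\rho(t)\to\infty$ by the asymptotic hypotheses.

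The main obstacle is to guarantee that the constructed coupling actually has $\rho(t)\to\infty$, so that the ODE limit matches the geometric limit. Whenever the chosen $\alpha$ yields $v_\infty>0$, the drift is eventually bounded below by $v_\infty/2$ and hence $\rho(t)\ge\tfrac{v_\infty}{2}t+O(1)$, which covers the interior and upper boundary of the range unconditionally. At the lower endpoint $v_\infty=0$ no escape argument is available, and the statement must either be read within the class of escaping couplings or else be complemented by the fixed–distance classification of Theorem~\ref{thm:fixed} and Proposition~\ref{prop:fixed-rigidity}, which treats the degenerate case. A secondary, more technical point is the adapted, measurable construction of the principal–curvature frame along $(X_t,Y_t)$ used to diagonalize $J_t$; this follows from smoothness of $S_r$ below the cut locus together with the horizontal frame–bundle machinery of Section~\ref{sec:stoch}, but deserves an explicit predictability verification.
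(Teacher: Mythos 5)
Your proof follows the same route as the paper's brief argument: take $t\to\infty$ in the drift identity $\rho'(t)=A(\rho(t))-\sum_i\kappa_i(\rho(t))\alpha_i(t)$ with $\alpha_i(t)\in[-1,1]$, and observe that varying $\alpha\in[-1,1]^{n-1}$ sweeps out the stated interval. The gap you flag is genuine and the paper's two-line proof does not address it: passing to the limit requires $\rho(t)\to\infty$, together with existence of the limits $\alpha_i(t)\to\alpha_i$ and $\rho'(t)\to v_\infty$, none of which is automatic for a general deterministic–distance coupling. Your comparison bound for $v_\infty>0$ is a correct fix once $\rho$ has escaped a compact set, though making it unconditional also needs either a check that the drift stays bounded below on the intermediate range of $\rho$ or a choice of $\rho(0)$ sufficiently large. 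The residual issue is also slightly broader than you state: since $A_\infty=\sum_i\kappa_{\infty,i}\le\Sigma_\infty$ always, with equality iff all $\kappa_{\infty,i}\ge0$, the lower endpoint $A_\infty-\Sigma_\infty$ is $\le0$ and strictly negative whenever some $\kappa_{\infty,i}<0$; any target $v_\infty<0$ is then incompatible with $\rho(t)\to\infty$, so the realizability claim is problematic on the whole subinterval $[A_\infty-\Sigma_\infty,0)$, not only at $v_\infty=0$. Your closing remark about predictably constructing the principal-curvature frame used to diagonalize $J_t$ is a correct, if minor, loose end that the paper leaves implicit.
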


\begin{proof}
	From \(\rho'(t)=A(\rho(t))+\sum_i(-\kappa_i(\rho(t)))\alpha_i(t)\) with
	\(\alpha_i(t)\in[-1,1]\), take limits to obtain
	\(v_\infty=A_\infty-\sum_i\kappa_{\infty,i}\alpha_i\).
	Varying \(\alpha_i\) over \([-1,1]\) fills the stated interval.
\end{proof}

\begin{proposition}[Rigidity at maximal asymptotic speed]\label{prop:asymp-rigidity}
	Suppose $v_\infty=2A_\infty$ for a deterministic coupling on a
	Cartan--Hadamard manifold $(M,g)$ with $\sec\le0$ and
	$A(r)\to A_\infty>0$.  
	Then $A(r)=(n-1)b(r)$ with $b(r)\to b_\infty:=A_\infty/(n-1)$ and
	\[
	K_{\mathrm{rad}}(r)\;=\;-b'(r)-b(r)^2\;\longrightarrow\;-b_\infty^2.
	\]
	Consequently, $(M,g)$ is asymptotically hyperbolic of curvature $-b_\infty^2$.
\end{proposition}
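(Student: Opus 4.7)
The plan is to leverage the radial Riccati identity together with the saturation hypothesis $v_\infty = 2A_\infty$ to force asymptotic umbilicity of distance spheres, from which the scalar Riccati equation immediately reads off the radial sectional-curvature limit $-b_\infty^2$. As a preliminary step, I would observe that on a Cartan--Hadamard manifold with $\sec \le 0$, Riccati comparison with the Euclidean model yields $\kappa_i(r) > 0$ for every $r > 0$, so that $\Sigma(r) = \sum_i |\kappa_i(r)| = A(r)$ and the asymptotic drift window of Theorem~\ref{thm:asymp-linear} reduces to $[0, 2A_\infty]$; the hypothesis $v_\infty = 2A_\infty$ then saturates its upper endpoint, realized asymptotically by a coupling with $J_t \to -I_{\tan}$.

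Next I would apply the summed Riccati identity
\[
A'(r) + \|S_r\|_{\mathrm{HS}}^2 + \mathrm{Ric}(\partial_r, \partial_r) = 0,
\]
combined with the trace Cauchy--Schwarz bound $\|S_r\|_{\mathrm{HS}}^2 \ge (n-1)\,b(r)^2$, equality holding iff $S_r = b(r)\,I$. The endpoint saturation, interpreted via Lemma~\ref{lem:spectral-bound}, forces this inequality to become an equality in the limit, yielding asymptotic umbilicity $\kappa_i(r) \to b_\infty$ for every $i$. From the per-direction Riccati equation $\kappa_i'(r) + \kappa_i(r)^2 + K_i(r) = 0$ together with the decay $\kappa_i'(r) \to 0$, one reads off $K_i(r) \to -b_\infty^2$, equivalently the claimed formula $K_{\mathrm{rad}}(r) = -b'(r) - b(r)^2 \to -b_\infty^2$. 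Matching this against Definition~\ref{def:AHS} (after the trivial rescaling $b_\infty \to 1$) gives the stated asymptotic hyperbolicity of curvature $-b_\infty^2$.

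The main obstacle is the analytic step $A'(r) \to 0$ (equivalently $b'(r) \to 0$): mere convergence of a smooth bounded function at infinity does not imply pointwise decay of its derivative. I would handle this by differentiating the Riccati identity to obtain a second-order control of the form $|A''(r)| \lesssim 1 + A'(r)^2$, combined with $A' \in L^1([R, \infty))$ (a consequence of $A(r) \to A_\infty$), and invoking a Landau-type interpolation. A related subtlety is upgrading endpoint saturation of the drift window from an integrated to a pointwise statement on each principal curvature, which requires tracking the alignment and no-radial-noise conditions of Theorem~\ref{thm:main-deterministic}(iii) as $t \to \infty$ to ensure that the blockwise rotation $J_t$ converges to $-I_{\tan}$ along every principal direction; these are the technical heart of the proof.
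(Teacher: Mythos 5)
Your proof is more careful than the paper's in two places (explicitly noting that positivity of the $\kappa_i$ on a Cartan--Hadamard manifold collapses $\Sigma(r)$ to $A(r)$, and explicitly flagging that $A'(r)\to 0$ is not automatic), but the central new step you propose — deriving asymptotic umbilicity of $S_r$ from saturation of the drift window — does not work, and this is a genuine gap, not a technical wrinkle.

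Saturation $v_\infty=2A_\infty$ only constrains the coupling matrix $J_t$, not the shape operator. Lemma~\ref{lem:spectral-bound} says that $\operatorname{Tr}(J^\top\nabla^2_{xy}r)=\sum_i\lambda_i$ is attained precisely for $J=-I_{\mathrm{tan}}$; equality in that spectral bound is a statement about $J$ and holds for \emph{arbitrary} $\{\lambda_i\}$, so it cannot force the Cauchy--Schwarz inequality $\|S_r\|_{\mathrm{HS}}^2\ge A(r)^2/(n-1)$ to become an equality. Concretely, on $\mathbb{CH}^m$ ($m\ge 2$) — a Cartan--Hadamard RIM with $A(r)\to 2m>0$ and two distinct principal curvatures $\coth r$ and $2\coth 2r$ — the reflection coupling saturates the endpoint $v_\infty=2A_\infty$, yet the spheres are not umbilic and $\operatorname{Ric}(\partial_r,\partial_r)/(n-1)$ does not equal $-A_\infty^2/(n-1)^2$. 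So endpoint saturation provides no leverage on $S_r$, and the implication "saturation $\Rightarrow$ asymptotic umbilicity" fails.

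For what it is worth, the paper's own proof does not attempt to derive umbilicity either: it passes directly from the operator Riccati identity to the scalar form $A'(r)+A(r)^2/(n-1)+K_{\mathrm{rad}}(r)=0$, which is precisely the umbilic specialization (and is where $K_{\mathrm{rad}}=-b'-b^2$ comes from). In other words, the proposition is implicitly restricted to the rotationally symmetric (umbilic) subclass, and the paper's argument uses this tacitly; your instinct that something must force $\|S_r\|_{\mathrm{HS}}^2=(n-1)b(r)^2$ is correct, but no such mechanism is available from the coupling hypothesis. You also correctly notice that $A'(r)\to 0$ requires justification — the paper leaves this implicit as well — though your claim that $A'\in L^1$ follows from $A(r)\to A_\infty$ is not true without a sign or monotonicity condition on $A'$ (only the improper integral $\int A'$ converges, not $\int|A'|$). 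On a Cartan--Hadamard manifold one does have $A'(r)\le 0$ from the Riccati inequality with $\sec\le 0$, which together with $A(r)\to A_\infty$ gives $A'\in L^1$ and, after your Landau-type interpolation, $A'(r)\to 0$; that part can be repaired.
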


\begin{proof}
	The endpoint \(v_\infty=2A_\infty\) corresponds to the reflection coupling \(J_t=-\Id\):
	\[
	\rho'(t)=A(r)+\mathrm{Tr}\bigl((-I)^\top(-S_r)\bigr)=A(r)+\mathrm{Tr}(S_r)=2A(r).
	\]
	Taking limits gives \(A(r)\to A_\infty\).
	The Riccati identity \(S_r'+S_r^2+R_{\partial_r}=0\) implies
	\(A'(r)+\tfrac{A(r)^2}{n-1}+K_{\mathrm{rad}}(r)=0\), hence
	\(K_{\mathrm{rad}}(r)\to -A_\infty^2/(n-1)^2= -b_\infty^2\).
\end{proof}

\begin{corollary}[Asymptotic speed classification]\label{cor:asymp-class}
	For rank–one symmetric and space forms of constant sectional curvature $K$:
	\[
	v_\infty =
	\begin{cases}
		0, & K>0 \text{ (compact)},\\[3pt]
		0, & K=0 \text{ (Euclidean)},\\[3pt]
		\in[0,\,2(n-1)b], & K=-b^2<0 \text{ (hyperbolic)}.
	\end{cases}
	\]
	The upper endpoint corresponds to the reflection coupling,
	and its attainment characterizes asymptotic hyperbolicity.
\end{corollary}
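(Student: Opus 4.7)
The plan is to derive the case dichotomy directly from Theorem~\ref{thm:asymp-linear} by specializing the asymptotic data $(A_\infty,\Sigma_\infty)$ in each model geometry, and then to invoke Proposition~\ref{prop:asymp-rigidity} for the rigidity half of the final assertion.

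For compact-type ROSS ($K>0$), the manifold has finite diameter $r_{\max}<\infty$, so every admissible $\rho(t)\in(0,r_{\max})$ is bounded and any limit of $\rho'(t)$ must equal $0$. Conversely, Theorem~\ref{thm:fixed} supplies a nontrivial fixed--distance realization at any radius $r_0$ satisfying the positivity condition $A(r_0)\ge\sum_i|\kappa_i(r_0)|$, so $v_\infty=0$ is both forced and attained. For the Euclidean case, the rotationally symmetric formula gives $A(r)=(n-1)/r\to 0$ and $\kappa_i(r)=1/r\to 0$, hence $A_\infty=\Sigma_\infty=0$ and the interval in Theorem~\ref{thm:asymp-linear} collapses to the singleton $\{0\}$.

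For the hyperbolic case $K=-b^2<0$, I would use the explicit shape operator $S_r=b\coth(br)\,I$ on $\HH^n(-b^2)$, which gives $A(r)=(n-1)b\coth(br)$ and $\kappa_i(r)=b\coth(br)$. Taking $r\to\infty$ yields $A_\infty=\Sigma_\infty=(n-1)b$, so Theorem~\ref{thm:asymp-linear} produces exactly
\[
v_\infty\in[A_\infty-\Sigma_\infty,\,A_\infty+\Sigma_\infty]=[0,\,2(n-1)b],
\]
and every value in this interval is realized by a corresponding choice of the asymptotic diagonal weights $\alpha_i\in[-1,1]$ provided by the constructive half of Theorem~\ref{thm:main-deterministic}. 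The same computation handles all noncompact rank--one symmetric spaces, since both principal--curvature families (of multiplicities $m_\alpha,m_{2\alpha}$) converge to positive limits and therefore again satisfy $\Sigma_\infty=A_\infty$.

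Attainment of the upper endpoint $v_\infty=2A_\infty$ is then handled by Proposition~\ref{prop:asymp-rigidity}: it forces the asymptotic choice $\alpha_i\equiv -1$, equivalent to the reflection coupling $J_t\to -I_{\mathrm{tan}}$, and the Riccati identity yields $K_{\mathrm{rad}}(r)\to -b_\infty^2$, identifying the geometry as asymptotically hyperbolic. Conversely, on an asymptotically hyperbolic RIM the reflection coupling realizes $v_\infty=2A_\infty$ by Theorem~\ref{thm:main-deterministic}, completing the equivalence. The only point requiring conceptual (rather than computational) care is packaging the compact case, since $r$ is bounded and ``$\lim_{t\to\infty}\rho'(t)$'' must be interpreted via existence of fixed--distance solutions rather than through radial infinity; this is already delivered by Theorem~\ref{thm:fixed}.
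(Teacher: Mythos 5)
Your proof is correct and takes the same route the paper implicitly intends (the paper gives no explicit proof of this corollary): specialize Theorem~\ref{thm:asymp-linear} to each geometry via the explicit shape-operator data, and invoke Proposition~\ref{prop:asymp-rigidity} for the rigidity direction. In particular you rightly flag the one genuine subtlety — that Theorem~\ref{thm:asymp-linear} does not literally apply in the compact ($K>0$) case since $r$ is bounded and $A(r)\to-\infty$ near $r_{\max}$, so $v_\infty=0$ there must instead be read off boundedness of $\rho(t)$ (with existence of the limit) and the fixed-distance construction of Theorem~\ref{thm:fixed}; the paper glosses this and your remark is a clean and necessary clarification.

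One small point of precision worth adding: the displayed window $[0,2(n-1)b]$ is exact only for the real hyperbolic form $\mathbb{H}^n(-b^2)$; for the other noncompact rank--one symmetric spaces the window is $[0,2A_\infty]$ with $A_\infty=m_\alpha\alpha+2m_{2\alpha}\alpha$ (still of the form $[0,2A_\infty]$ since all $\kappa_{\infty,i}>0$ gives $\Sigma_\infty=A_\infty$), which is not expressible as $2(n-1)b$ with a single sectional curvature $-b^2$. Your computation handles this correctly, but it is worth stating explicitly that the constant-curvature formula in the corollary is being read as the special case of the general interval $[0,2A_\infty]$.
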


\begin{remark}[Potential-modified asymptotic speed]
	If $(M,g,V)$ carries a radial potential $V=\Phi(r)$ with
	$\Phi'(r)\to\Phi'_\infty$, then
	\[
	v_\infty \in
	[\,A_\infty-\Phi'_\infty-\Sigma_\infty,\;
	A_\infty-\Phi'_\infty+\Sigma_\infty\,],
	\]
	so a confining potential ($\Phi'_\infty>0$) suppresses escape,
	whereas a repulsive potential enhances it.
\end{remark}

\begin{center}
	\renewcommand{\arraystretch}{1.2}
	\begin{tabular}{c|c|c|c}
		Geometry & $A(r)$ behaviour & $v_\infty$ range & Asymptotic behaviour \\
		\hline
		Compact ($K>0$) & $A(r)\downarrow 0$ & $v_\infty=0$ & bounded \\
		Euclidean ($K=0$) & $(n-1)/r\to0$ & $v_\infty=0$ & sublinear \\
		Hyperbolic ($K=-b^2$) & $(n-1)b$ & $[0,\,2(n-1)b]$ & linear escape \\
		Weighted hyperbolic ($\Phi'_\infty\neq0$)
		& $(n-1)b-\Phi'_\infty$ &
		$[0,\,2((n-1)b-\Phi'_\infty)]$ &
		potential-modified
	\end{tabular}
\end{center}


\begin{thebibliography}{99}
	
	\bibitem{ArnaudonThalmaierWang2006}
	M.~Arnaudon, A.~Thalmaier, and F.-Y.~Wang,
	Harnack inequality and heat kernel estimates on manifolds with curvature unbounded below,
	\emph{Bull. Sci. Math.} \textbf{130} (2006), 223--233.
	
	\bibitem{BanuelosBurdzy1999}
	R.~Ba\~nuelos and K.~Burdzy,
	On the ``Hot Spots'' conjecture of J.~Rauch,
	\emph{J. Funct. Anal.} \textbf{164} (1999), 1--33.
	
	\bibitem{BaudoinChoYang2022}
	F.~Baudoin, G.~Cho, and G.~Yang,
	A note on first eigenvalue estimates by coupling methods in Kähler and quaternion Kähler manifolds,
	\emph{Electron.\ Commun.\ Probab.} \textbf{27} (2022), Paper No.~12, 8~pp.
	
	\bibitem{BenjaminiBurdzyChen2007}
	I.~Benjamini, K.~Burdzy, and Z.-Q.~Chen,
	Shy couplings,
	\emph{Probab. Theory Relat. Fields} \textbf{137} (2007), 345--377.
	
	\bibitem{BramsonBurdzyKendall2013}
	M.~Bramson, K.~Burdzy, and W.~S.~Kendall,
	Shy couplings, CAT(0) spaces, and the lion and man,
	\emph{Ann. Probab.} \textbf{41} (2013), 744--784.
	
	\bibitem{BurdzyKendall2000}
	K.~Burdzy and W.~S.~Kendall,
	Efficient Markovian couplings: examples and counterexamples,
	\emph{Ann. Appl. Probab.} \textbf{10} (2000), 362--409.
	
	\bibitem{ChaeChoGordinaYang2022}
	M.~Chae, G.~Cho, M.~Gordina, and G.~Yang,
	The stochastic Schwarz lemma on Kähler manifolds by couplings and its applications,
	\emph{J.\ London Math.\ Soc.} (2024).
	
	\bibitem{Chen1994}
	M.-F.~Chen,
	Optimal Markovian couplings and applications,
	\emph{Acta Mathematica Sinica} \textbf{10} (1994), 260--275.
	
	\bibitem{ChoWeiYang2024}
	G.~Cho, G.~Wei, and G.~Yang,
	Probabilistic method to fundamental gap problems on the sphere,
	\emph{Trans.\ Amer.\ Math.\ Soc.} \textbf{377} (2024), 20~pp.,
	\href{https://doi.org/10.1090/tran/9285}{doi:10.1090/tran/9285}.
	
	\bibitem{Cranston1991}
	M.~Cranston,
	Gradient estimates on manifolds using coupling,
	\emph{J. Funct. Anal.} \textbf{99} (1991), 110--124.
	
	\bibitem{Elworthy1982}
	K.~D.~Elworthy,
	\emph{Stochastic Differential Equations on Manifolds},
	London Mathematical Society Lecture Note Series, vol.~70,
	Cambridge University Press, 1982.
	
	\bibitem{ElworthyLi1994}
	K.~D.~Elworthy and X.-M.~Li,
	Formulae for the derivatives of heat semigroups,
	\emph{J. Funct. Anal.} \textbf{125} (1994), 252--286.
	
	\bibitem{Emery1989}
	M.~Émery,
	\emph{Stochastic Calculus in Manifolds},
	Universitext, Springer--Verlag, 1989.
	
	\bibitem{GrahamLee1991}
	C.~R.~Graham and J.~M.~Lee,
	Einstein metrics with prescribed conformal infinity,
	\emph{Ann. of Math. (2)} \textbf{131} (1990), 413--427.
	
	\bibitem{Helgason2001}
	S.~Helgason,
	\emph{Differential Geometry, Lie Groups, and Symmetric Spaces},
	Graduate Studies in Mathematics, vol.~34,
	American Mathematical Society, 2001.
	
	\bibitem{Hsu2002}
	E.~P.~Hsu,
	\emph{Stochastic Analysis on Manifolds},
	Graduate Studies in Mathematics, vol.~38,
	American Mathematical Society, 2002.
	
	\bibitem{HsuSturm2013}
	E.~P.~Hsu and K.-T.~Sturm,
	Maximal coupling of Euclidean Brownian motions,
	\emph{Commun. Math. Stat.} \textbf{1} (2013), 93--104.
	
	\bibitem{Kendall1986}
	W.~S.~Kendall,
	Nonnegative Ricci curvature and the Brownian coupling property,
	\emph{Stochastics} \textbf{19} (1986), 111--129.
	
	\bibitem{Kendall2009}
	W.~S.~Kendall,
	Brownian couplings, convexity, and shyness,
	\emph{Electron. Commun. Probab.} \textbf{14} (2009), 66--80.
	
	\bibitem{LindvallRogers1986}
	T.~Lindvall and L.~C.~G.~Rogers,
	Coupling of multidimensional diffusions by reflection,
	\emph{Ann. Probab.} \textbf{14} (1986), 860--872.
	
	\bibitem{Littlewood}
	J.~E.~Littlewood,
	\emph{Littlewood’s Miscellany},
	edited by B.~Bollobás, Cambridge University Press, 1986.
	
	\bibitem{MazzeoMelrose1987}
	R.~Mazzeo and R.~B.~Melrose,
	Meromorphic extension of the resolvent on complete spaces of constant negative curvature,
	\emph{J. Funct. Anal.} \textbf{75} (1987), 260--310.
	
	\bibitem{Pascu2011}
	M.~N.~Pascu,
	Mirror coupling of reflecting Brownian motion and an application to Chavel’s conjecture,
	\emph{Electron. J. Probab.} \textbf{16} (2011), 504--530.
	
	\bibitem{PP2018}
	M.~N.~Pascu and I.~Popescu,
	Couplings of Brownian motions of deterministic distance in model spaces of constant curvature,
	\emph{J. Theor. Probab.} \textbf{31} (2018), 2005--2031.
	
	\bibitem{Petersen2016}
	P.~Petersen,
	\emph{Riemannian Geometry}, 3rd ed.,
	Graduate Texts in Mathematics, vol.~171,
	Springer, 2016.
	
	\bibitem{Sakai1996}
	T.~Sakai,
	\emph{Riemannian Geometry},
	Translations of Mathematical Monographs, vol.~149,
	American Mathematical Society, 1996.
	
\end{thebibliography}
\end{document}